\def\ra{\rightarrow}
\def\Z{\mathbb{Z}}
\def\N{\mathbb{N}}
\newcommand{\Pcal}{\mathcal{P}}
\newcommand{\Lcal}{\mathcal{L}}
\newcommand{\Dcal}{\mathcal{D}}
\newcommand{\Tcal}{\mathcal{T}}
\newcommand{\mc}[1]{\mathcal{#1}}
\DeclareMathOperator{\interior}{int}
\definecolor{shadecolor}{HTML}{EDF8FC}
\newtheorem{theorem}{Theorem}[section]
\newtheorem{corollary}{Corollary}[section]
\newtheorem{proposition}{Proposition}[section]
\newtheorem{lemma}{Lemma}[section]
\newtheorem{conjecture}{Conjecture}[section]
\newtheorem{claim}{Claim}[section]
\newtheorem{question}{Question}[section]
\newtheorem{problem}{Problem}[section]
\theoremstyle{definition}
\newtheorem{definition}{Definition}[section]
\newtheorem{example}{Example}[section]
\newtheorem{remark}{Remark}[section]
\let\c@conjecture=\c@theorem
\let\c@corollary=\c@theorem
\let\c@proposition=\c@theorem
\let\c@lemma=\c@theorem
\let\c@definition=\c@theorem
\let\c@problem=\c@theorem
\let\c@example=\c@theorem
\let\c@remark=\c@theorem
\let\c@equation\c@theorem
\let\c@question\c@theorem
\def\makeautorefname#1#2{\expandafter\def\csname#1autorefname\endcsname{#2}}
\newcounter{step}
\def\@tocline#1#2#3#4#5#6#7{\relax
  \ifnum #1>\c@tocdepth 
  \else
    \par \addpenalty\@secpenalty\addvspace{#2}%
    \begingroup \hyphenpenalty\@M
    \@ifempty{#4}{%
      \@tempdima\csname r@tocindent\number#1\endcsname\relax
    }{%
      \@tempdima#4\relax
    }%
    \parindent\z@ \leftskip#3\relax \advance\leftskip\@tempdima\relax
    \rightskip\@pnumwidth plus4em \parfillskip-\@pnumwidth
    #5\leavevmode\hskip-\@tempdima
      \ifcase #1
       \or\or \hskip 1em \or \hskip 2em \else \hskip 3em \fi%
      #6\nobreak\relax
    \dotfill\hbox to\@pnumwidth{\@tocpagenum{#7}}\par
    \nobreak
    \endgroup
  \fi}
\newcommand\T{\rule{0pt}{2.6ex}}       
\newcommand\B{\rule[-1.2ex]{0pt}{0pt}} 
\title{Pants distances of knotted surfaces in $4$-manifolds}
\author{Rom\'an Aranda, Sarah Blackwell, Devashi Gulati, Homayun Karimi,\\ Geunyoung Kim, Nicholas Paul Meyer, and Puttipong Pongtanapaisan}
\date{}
\begin{document}
\maketitle
\begin{abstract}
We define a pants distance for knotted surfaces in 4-manifolds, which generalizes the complexity studied by Blair-Campisi-Taylor-Tomova for surfaces in the 4-sphere. We determine that if the distance computed on a given diagram does not surpass a theoretical bound in terms of the multisection genus, then the pair $(X, F)$ admits a standard form (i.e., has simple topology). Furthermore, we calculate the exact values of our invariants for many new examples, such as the spun lens spaces. We provide a characterization of genus two quadrisections with distance at most six.
\end{abstract}

\section{Introduction} \label{sec:intro}

The study of 3-manifold topology via complexes of curves has been an active and fruitful area of research in low-dimensional topology. 
In 2001, Hempel used the curve complex of a surface to define an integer complexity for Heegaard splittings \cite{Hempel01}, which measures how ``far apart'' the two sides of the Heegaard splitting are by computing the distance between compressing disks on each side. Hempel then used this notion of distance to obstruct the existence of essential tori in the given 3-manifold.  
In the last twenty years, versions of Hempel's distance have been used to make progress on important problems in low-dimensional topology \cite{johnson2010mapping,blair2017exceptional,blair2020distortion}.

A trisection of a closed 4-manifold is a decomposition into three 4-dimensional handlebodies \cite{gay2016trisecting}, and can be thought of as a 4-dimensional analog of Heegaard splittings. The data of a trisection is determined by three 3-dimensional handlebodies which can be described by various collections of curves on a closed surface. 
In 2018, Kirby and Thompson used trisections to introduce an invariant of closed 4-manifolds in the spirit of Hempel's distance for Heegaard splittings
\cite{kirby2018new}. Their invariant, denoted by $\Lcal(X)$, uses the cut complex of a surface to measure how far apart the three handlebodies of a trisection are.

In the following years, versions of the $\Lcal$-invariant for other 4-dimensional structures have appeared in the literature. Castro, Islambouli, Miller, and Tomova used the arc-cut complex and relative trisections to define $\Lcal$-invariants of compact 4-manifolds with boundary \cite{castro2022relative}. Blair, Campisi, Taylor, and Tomova used the pants complex and bridge trisections to define the $\Lcal$-invariant of knotted surfaces in $S^4$ \cite{blair2020kirby}.
Aranda, Pongtanapaisan, and Zhang used the dual curve complex and bridge trisections to define a new $\Lcal^*$-invariant for knotted surfaces in $S^4$ \cite{aranda2022bounds}. 
Besides their motivation, all $\Lcal$-invariants above share a common trait: they all detect the simpler topology. For example, it has been proved that the \hbox{$\Lcal$-invariant} of a surface in $S^4$ can be used to detect when a knotted surface is not prime or smoothly unknotted \cite{blair2020kirby}. These results indicate that $\Lcal$ may offer new insights into the smooth topology in four dimensions. 

The aim of this paper is to introduce generalizations of the $\Lcal$-invariants of \cite{blair2020kirby} and \cite{aranda2022bounds} for knotted surfaces in arbitrary 4-manifolds. 
Our new invariants (see \autoref{sec:invariants}), denoted by $\Lcal_n^*$ and $\Lcal_n^\Pcal$, 
are reminiscent of the bridge and pants complexities of knots in $3$-manifolds studied by Johnson and Zupan \cite{johnson2006heegaard,zupan2013bridge}. 
In practice, they are invariants of bridge $n$-sections of (4-manifold, surface) pairs \hbox{(see \autoref{subsec:multisections}).} 

\subsection{Comparison to previous \texorpdfstring{$\Lcal$}{L}-invariants}

Here we discuss more precisely how our invariants fit into the existing literature. When the 4-manifold is $S^4$, our invariants $\Lcal_3^\Pcal$ and $\Lcal_3^*$ agree with the invariants $\Lcal$ and $\Lcal^*$ of a knotted surface from \cite{blair2020kirby} and \cite{aranda2022bounds}, respectively.

When the surface is not present, we obtain a complexity measure of a closed $4$-manifold independent from the one originally defined by Kirby and Thompson. 
In particular, \autoref{cor:ComparingKT1} and \autoref{cor:ComparingKT2} show that there are two infinite families of $4$-manifolds $X_p$ and $Y_m$ such that $\Lcal(X_p) \to \infty$ as $p \to \infty$ while $\Lcal^*_3(X_p)$ remains bounded, and that $\Lcal^*_3(Y_m) \to \infty$ as $m\to \infty$ while $\Lcal(Y_m)$ remains bounded. We consider the $n=3$ version of $\Lcal^*_n$ here in order to compare with Kirby and Thompson's $\Lcal$, which was defined for trisections. We do \textit{not} consider the $\Lcal_n^\Pcal$ invariant here, for reasons we will see shortly.

The distinction between our invariants and the original Kirby-Thompson $\Lcal$-invariant is the metric space in which we compute distance. The Kirby-Thompson $\Lcal$-invariant uses the cut complex, whereas the invariants $\Lcal^*_n$ and $\Lcal^{\Pcal}_n$ use the dual curve complex and pants complex, respectively (see \autoref{sec:invariants} for details). 
As the pants complex is a subcomplex of the dual curve complex, we always have that $\Lcal_n^*\leq \Lcal_n^{\mc P}$ (see \autoref{remark:compare}). 
Because of this, many of our results are stated (solely) in terms of the $\Lcal_n^*$ version.

\subsection{Detection results} \label{subsec:detection}
A good feature of Kirby-Thompson's $\mathcal{L}$-invariant is that it detects the 4-sphere: if $X$ is a homology $4$-sphere with $\mathcal{L}(X)\leq 1$, then $X$ is diffeomorphic to the 4-sphere.  
Similar results hold 
for the $\Lcal$-invariants of 4-manifolds with boundary \cite{castro2022relative} and surfaces in $S^4$ \cite{blair2020kirby}. 
In this paper, we present an incarnation of the now classic detection result for $\Lcal_n^*$-invariants of (4-manifold, surface) pairs.  

\begin{theorem} \label{thm:leq1}
Let $F$ be an embedded surface in a closed $4$-manifold $X$ with no $S^1\times S^3$ summands. If $\Lcal_n^*(X,F)\leq 1$ for some $n\geq 3$, then $F$ is smoothly isotopic to an unknotted surface in $X\cong\#^i S^2\times S^2\#^j \mathbb{CP}^2\#^k\overline{\mathbb{CP}^2}$. 
\end{theorem}

\begin{proof}
Suppose that $(X,F)$ admits a bridge multisection $\Tcal$ with $\Lcal_n^*(\Tcal)\leq 1$. \autoref{thm:ogawaV1} implies that $\Tcal$ is completely decomposable (see \autoref{subsec:new_from_old}). \autoref{lem:completely_decomposable} states that for such pairs $F$ is unknotted and $X$ is standard. 
\end{proof}

This bound is the best possible as \autoref{exmp:s2xpt} exhibits a complex curve in \hbox{$\mathbb{CP}^1\times \mathbb{CP}^1$} of bi-degree (1,0) with $\Lcal_4^*=2$. We have proved stronger results: in \autoref{sec:ogawa} we classify multisections with complexity at most two.
Recently, Ogawa improved the detection result for Kirby-Thompson's $\Lcal$-invariant, showing that in fact, $\mathcal{L}(X)\leq 2$ implies that $X$ is diffeomorphic to the $4$-sphere \cite{ogawa2021trisections}. We prove that the same result holds for $\Lcal_3^*$. The following is a stronger version of \cite[Theorem~3.5]{aranda2022bounds}. 
\begin{theorem} 
Let $F$ be an embedded surface in a closed $4$-manifold $X$ with no $S^1\times S^3$ summands. If $\Lcal_3^*(X,F)\leq 2$, then $F$ is smoothly isotopic to an unknotted surface in \hbox{$X\cong\#^i S^2\times S^2\#^j \mathbb{CP}^2\#^k\overline{\mathbb{CP}^2}$.} 
\end{theorem}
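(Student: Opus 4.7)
The plan is to follow the template of the previous theorem but invoke the finer classification of complexity-two configurations promised in \autoref{sec:ogawa}. Fix a bridge trisection $\Tcal$ of $(X,F)$ realizing $\Lcal_3^*(X,F)\leq 2$. The only new content beyond the preceding theorem is the case $\Lcal_3^*(\Tcal)=2$, since the bound $\leq 1$ is already covered there.

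First, I would enumerate the possible length-two configurations in the dual cut complex. Distance two is realized either by a pair of disjoint elementary moves on two different edges of the trisection surface, or by a single non-trivial distance-two path along one edge. For each configuration I would translate the combinatorial data back to dual curves on the central surface and show, via standard cut-and-paste on the spine, that $\Tcal$ splits as a connected sum of a strictly simpler bridge trisection with one of the elementary summands: $(S^4,\text{unknot})$, $(\pm\mathbb{CP}^2,\text{unknotted sphere})$, $(S^2\times S^2,\text{unknotted sphere})$, or, crucially, an $(S^1\times S^3,\text{unknotted sphere})$ factor.

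Next, I would use the hypothesis that $X$ has no $S^1\times S^3$ summand to eliminate the last family. This mirrors, in the surface-in-4-manifold setting, Ogawa's upgrade of the Kirby-Thompson detection theorem \cite{ogawa2021trisections}: at complexity two the only obstruction to complete decomposability is a concealed $S^1\times S^3$ factor, and the hypothesis kills it. Once these cases are ruled out, $\Tcal$ is a connected sum of completely decomposable pieces in the sense of \autoref{subsec:new_from_old}, and \autoref{lem:completely_decomposable} delivers the conclusion.

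The main obstacle will be the combinatorial classification in the first step. Unlike the $\Lcal\leq 1$ case, distance exactly two admits several combinatorial types of realizing arcs, and one has to show that each type either splits off an elementary summand or forces an $S^1\times S^3$ summand. This requires a careful, hands-on analysis of how the minimizing dual curves on the central surface meet the three cut systems of the trisection, and it is exactly the work that \autoref{sec:ogawa} is advertised to carry out.
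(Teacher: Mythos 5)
Your high-level template — invoke the classification of complexity-two trisections from \autoref{sec:ogawa}, reduce to elementary pieces, apply \autoref{lem:completely_decomposable} — is the right shape, but you have mislocated the real obstruction and misread what the c-reduction actually yields. \autoref{thm:ogawaV2a} does \emph{not} split $\Tcal$ into a simpler trisection connect-summed with $(S^4,\text{unknot})$, $(\pm\mathbb{CP}^2,\text{unknotted sphere})$, $(S^2\times S^2,\text{unknotted sphere})$, or $(S^1\times S^3,\emptyset)$; it reduces $\Tcal$ to bridge trisections of complexity $(g,b)\in\{(0,1),(0,2),(0,3),(1,0),(1,1),(2,0)\}$ each still with $\Lcal_3^*\leq 2$. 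The summands with $(g,b)\in\{(0,3),(1,1),(2,0)\}$ are not elementary, and the paper handles them not by further combinatorics but by citing external classification results for low-complexity bridge trisections.

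The key step you omit is ruling out the two genuinely knotted pieces that survive that classification: the doubly pointed bridge trisections of the degree-one and degree-two complex curves in $\mathbb{CP}^2$, which have $(g,b)=(1,1)$. These are not killed by the ``no $S^1\times S^3$ summand'' hypothesis; that hypothesis only enters at the very end, through \autoref{lem:completely_decomposable}, to pin down the form of $X$. The two $\mathbb{CP}^2$ curves are instead excluded by the explicit computation in \autoref{exam:deg12_cp2} showing $\Lcal_3^*(\mathbb{CP}^2,\mathcal{C}_1)=\Lcal_3^*(\mathbb{CP}^2,\mathcal{C}_2)=3$, which exceeds the assumed bound. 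Your strategy of ``split off an elementary summand or force an $S^1\times S^3$ summand'' would stall exactly at these two examples — they neither split further nor produce an $S^1\times S^3$ factor — so the argument as outlined would not close. The analogy you draw with Ogawa's theorem is also off: in the closed-manifold version of that argument, $S^1\times S^3$ does not appear as a sticking point, and the surface-theoretic version here is obstructed by low-bridge complex curves, not by 1-handles.
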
 \label{thm:leq2}
\begin{proof}
Let $\Tcal$ be a bridge trisection of the pair $(X,F)$ with $\Lcal_3^*(\Tcal)=2$. \autoref{thm:ogawaV2a} implies that $\Tcal$ is a c-connected sum (see \autoref{subsec:new_from_old}) of bridge trisections with complexities $(g,b)\in \{(0,1)$, $(0,2)$, $(0,3)$, $(1,0)$, $(1,1)$, $(2,0)\}$ and $\Lcal_3^*\leq 2$. All such $(g,b)$-bridge trisections have been classified: 
manifolds with $(g,b)=(g,0)$ and $g\leq 2$ are connected sums of copies of $S^1\times S^3$, $\pm\mathbb{CP}^2$, and $S^2\times S^2$~\cite{MZgenustwo}. Surfaces with $(g,b)=(0,b)$ with $b\leq 3$ are unknotted inside $S^4$~\cite[Theorem~1.8]{meier2017bridgeS4}. If $(g,b)=(1,1)$, then the trisected surface is either unknotted inside a 4-manifold of the form \hbox{$X\cong\#^i S^2\times S^2\#^j \mathbb{CP}^2\#^k\overline{\mathbb{CP}^2}$}, or is a complex curve of degree one or two in $\mathbb{CP}^2$~\cite[Proposition~3.2]{meier2018bridge4manifolds}. 
In \autoref{exam:deg12_cp2}, we showed that $\Lcal_3^*=3$ for complex curves $\mathcal{C}_d$ of degree $d=1,2$, so $(\mathbb{CP}^2,\mathcal{C}_d)$ is not a connected summand for $(X,F)$. In conclusion, as $X$ has no $S^1\times S^3$ summands, $(X,F)$ is the c-connected sum of pairs of the form $(S^4,F')$ and $(X',\emptyset)$ with $F'$ an unknotted surface and $X'$ a copy of $\pm \mathbb{CP}^2$ or $S^2\times S^2$.
\end{proof}

As with other meaningful invariants, it is natural to ask whether our complexity can be arbitrarily large. We give a positive answer to this question in the following theorem, and remark that when $X$ is diffeomorphic to $S^4,$ we recover \cite[Theorem~1.1]{aranda2022bounds}. Recall that a 4-manifold is \textit{prime} if it does not decompose as a connected sum unless one summand is $S^4.$ In the same spirit, a knotted surface is \textit{prime} if it is nontrivial and it cannot decompose as a connected sum unless one summand is a trivial 2-knot. A pair $(X,F)$ is prime if both $X$ and $F$ are prime.

\begin{theorem} \label{thm:gb}
Let $(X,F)$ be a prime ($4$-manifold, surface) pair. Then, 
$$\mathcal{L}_3^*(X,F) \geq 7(g-1)-\chi(X)+4b+\chi(F),$$
where $(g,b)$ are the smallest trisection genus and bridge number of the pair $(X,F)$.
\end{theorem}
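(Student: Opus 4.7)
The proof plan extends the argument of \cite[Theorem~1.1]{aranda2022bounds} from the $S^4$-setting to an arbitrary closed $4$-manifold $X$. Fix a bridge trisection $\Tcal=(\Sigma;\a,\b,\g)$ of $(X,F)$ with central surface of genus $g_0\geq g$ and with $b_0\geq b$ bridge points in each sector that realizes the value $\Lcal_3^*(X,F)$. By definition of $\Lcal_3^*$, there exist three length-efficient paths $P_{\a\b}$, $P_{\b\g}$, $P_{\g\a}$ in the appropriate dual curve complex connecting the reducing systems of the three sectors, whose total length equals $\Lcal_3^*(\Tcal)$.

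The first step is to interpret each elementary edge of $P_{\a\b}$, $P_{\b\g}$, $P_{\g\a}$ either as a destabilization of $\Tcal$ or as the choice of a c-reducing sphere for $(X,F)$, following the analogous move classification in \cite{aranda2022bounds}. Primality of $(X,F)$ would then force every c-reducing sphere to bound a trivial summand, and minimality of $(g,b)$ would preclude genuine destabilizations. Together these constraints bound the rate at which the three paths can simplify $\Tcal$.

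The second step is an Euler characteristic count. Using the inclusion-exclusion formula $\chi(X)=2+g_0-(k_1+k_2+k_3)$ for a trisection $(g_0;k_1,k_2,k_3)$ of $X$, together with its analog for the surface $F$ in bridge position, one translates edge-by-edge contributions along each path into controlled increments of $\chi(X)$ and $\chi(F)$. Summing the resulting inequalities over the three paths would yield
\[
\Lcal_3^*(\Tcal) \;\geq\; 7(g_0-1)-\chi(X)+4b_0+\chi(F).
\]
Combined with $g_0\geq g$, $b_0\geq b$, and the monotonicity of the right-hand side in $(g_0,b_0)$, this gives the stated inequality.

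The main obstacle will be the Euler characteristic bookkeeping in arbitrary $X$. In the $S^4$ case the parameters $k_i$ are zero, so the count in \cite{aranda2022bounds} collapses to an expression purely in $g_0$ and $b_0$. For general $X$ one must track the $k_i$ and their bridge-parameter counterparts for $F$ through each elementary move and verify that they aggregate cleanly into the single term $-\chi(X)$. Extending the primality-based obstruction of \cite{aranda2022bounds} so that nontrivial topology of $X$ is correctly absorbed into this Euler characteristic term is where the technical difficulty lies.
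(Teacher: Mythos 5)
Your proposal misses the structure of the argument and, as a result, misattributes where the Euler-characteristic formula comes from. The paper does \emph{not} track ``edge-by-edge contributions'' or classify each elementary edge of the loop as a destabilization vs.\ a c-reducing sphere. Instead it proceeds by a clean contrapositive: it first establishes (\autoref{thm:lower_bound_1}, rephrased as \autoref{thm:lower_bound_1_V2}) that \emph{any c-irreducible} $(g,k;b,c)$-bridge trisection satisfies
\[
\Lcal_3^*(\Tcal)\;\geq\;6g+3b+\textstyle\sum_i k_i+\sum_i c_i-9\;=\;7(g-1)-\chi(X)+4b+\chi(F),
\]
and then observes that if a minimal trisection of $(X,F)$ violated this bound, it would have to be c-reducible; but a c-reduction of a minimal trisection yields a nontrivial connected-sum decomposition, contradicting primality. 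No destabilizations or move-by-move bookkeeping appear.

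The missing ideas in your proposal are the two actual technical inputs. First, \autoref{lem:efficient} determines the exact distance between efficient defining pairs, so that the number of fixed curves is $|P_i\cap P'_i|=(3g+2b-3)-(g-k_i+b-c_i)=2g+k_i+b+c_i-3$. Second, \autoref{prop:no_reducing_curves_V2} shows that for a c-irreducible trisection, none of these fixed curves can survive into the adjacent efficient pair, so \emph{every} curve in $P_i\cap P'_i$ must move inside the island $\Dcal_i$. Summing over $i=1,2,3$ gives the bound above, and the $\chi(X)$, $\chi(F)$ terms emerge by substituting $\chi(X)=2+g-\sum k_i$ and $\chi(F)=\sum c_i-b$ — a one-line algebraic translation, not an incremental accounting over edges. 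Without \autoref{prop:no_reducing_curves_V2} you have no mechanism for forcing the distances between islands to be as large as $|P_i\cap P'_i|$, and without \autoref{lem:efficient} you have no way to count those fixed curves; your ``main obstacle'' is really that the framework you describe has no engine to produce the inequality at all.
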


\begin{proof}
Let $\Tcal$ be a minimal bridge trisection for $(X,F)$ with $\mathcal{L}_3^*(\Tcal) < 7(g-1)-\chi(X)+4b+\chi(F)$. According to \autoref{thm:lower_bound_1_V2}, $\mathcal{T}$ is c-reducible. As $\Tcal$ is minimal, it must decompose $(X,F)$ into two nontrivial summands. This contradicts the primality of $(X,F)$.
\end{proof}

\subsection{Classification of genus two quadrisections}
Genus two trisections were shown to be standard by Meier and Zupan \cite{MZgenustwo}, but the classification of genus three trisections is still open.
A complete list of manifolds admitting genus three trisections was proposed by Meier \cite{Meier}. Progress towards this problem was made by the first author and Moeller \cite{aranda2019diagrams}, where various trisection diagrams were shown to belong to Meier's list.

An advantage of multisections is that generally speaking, a 4-manifold may be represented by a central surface of a lower genus \cite[Theorem~8.4]{islambouli2020multisections}. In their work, Naylor and Islambouli showed that \hbox{4-manifolds} admitting genus two quadrisections form a subfamily of all 4-manifolds admitting genus three trisections. 
This leads to the following problem.
\begin{problem}
Classify $4$-manifolds admitting genus two quadrisections. \label{ques:classify4sec}
\end{problem}

In \autoref{sec:genus_two_quadrisec} we solve \autoref{ques:classify4sec} for 4-manifolds with $\Lcal_4^*$-invariant at most six. 

\begin{theorem}
Let $X$ be a $4$-manifold with a $(2,1)$-quadrisection $\Tcal$. If $\Lcal_4^*(\Tcal)\leq 6$, then $X$ is diffeomorphic to the spin of a lens space, the twisted-spin of a lens space, $\#^2 S^1\times S^3$, or $\#^h S^1\times S^3\#^iS^2\times S^2\#^j\mathbb{CP}^2\#^k \overline{\mathbb{CP}^2}$ where $h=\{0,1\}$.
\end{theorem}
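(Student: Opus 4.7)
The plan is to use the complexity bound $\Lcal_4^*(\Tcal)\le 6$ to force $\Tcal$ into one of two structural regimes and then classify each. Write the $(2,1)$-quadrisection data as a cyclic tuple of four cut systems $c_1,c_2,c_3,c_4$ on a central surface $\Sigma=\Sigma_2$, where each $c_i$ is a pair of essential simple closed curves on $\Sigma$ cutting it into a planar surface. The dual curve complex distances $d^*(c_i,c_{i+1})$ sum to $\Lcal_4^*(\Tcal)$, so the hypothesis caps the cyclic total by $6$ and restricts how far each $c_{i+1}$ can be from $c_i$.

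The first step is to handle the case when $\Tcal$ is c-reducible. By the decomposition machinery of \autoref{subsec:new_from_old} together with the lower bound of \autoref{thm:lower_bound_1_V2} (now applied in the no-surface setting, where the $\chi(F)$ and $b$ terms drop out), a c-reducible $\Tcal$ splits $X$ as a c-connected sum of 4-manifolds each admitting a multisection of strictly smaller complexity. Iterating the reduction and invoking the known classifications of $(0,*)$ and $(1,*)$ multisections together with the $\Lcal_n^*\le 1$ and $\Lcal_3^*\le 2$ detection theorems in the excerpt, the only summands that can appear are $S^1\times S^3$, $S^2\times S^2$, $\mathbb{CP}^2$, and $\overline{\mathbb{CP}^2}$; an elementary count of how many $S^1\times S^3$ summands a genus two central surface can carry produces the restriction $h\in\{0,1\}$ (or $h=2$ in the degenerate case where every $c_i$ is isotopic) and reproduces exactly the last family in the statement.

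The second step is the c-irreducible case, which is where the spun and twisted-spun lens spaces arise. Here I would enumerate, up to the mapping class group action on $\Sigma_2$, the cyclic tuples of cut systems with $\sum d^*(c_i,c_{i+1})\le 6$ and no reducing sphere common to adjacent sectors. The key observation is that an irreducible $(2,1)$-quadrisection of this complexity must have each adjacent pair $c_i,c_{i+1}$ differing by a single band move on one component, while the pair $c_i,c_{i+2}$ across the diagonal carries the data of a genus one Heegaard splitting; tracking how the four cut curves cyclically rotate on $\Sigma_2$ produces a continued fraction that recovers $q/p$, realizing the quadrisection as the standard genus two quadrisection of the spun lens space $S(L(p,q))$ (respectively the twisted spin when the gluing monodromy is orientation-reversing on the $S^1$ factor). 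The $\#^2 S^1\times S^3$ exception arises from the single degenerate irreducible configuration where the band moves cancel.

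The main obstacle is the combinatorial classification in the irreducible case: one must rule out sporadic cyclic patterns of cut systems that pass the distance bound but do not obviously correspond to a spun lens space, and then match each surviving pattern to an explicit quadrisection diagram in the literature. I expect this to be done by a finite case analysis organized by the distance sequence $(d^*(c_1,c_2),d^*(c_2,c_3),d^*(c_3,c_4),d^*(c_4,c_1))$ subject to $\sum \le 6$, reducing to a handful of canonical forms that can be identified by computing $H_1$ and the intersection form and comparing to the known invariants of $S(L(p,q))$ and its twist. Combining the two cases yields the stated list.
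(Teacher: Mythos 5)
Your proposal reproduces the paper's high-level split (reducible versus irreducible) but several of the concrete steps are wrong or missing, and the key technical input of the paper's argument never appears.

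First, in the c-reducible branch you invoke \autoref{thm:lower_bound_1_V2}, but that theorem is stated and proved only for \emph{trisections} ($n=3$); it does not apply to a quadrisection, and in any case a lower bound of that flavor is not what is needed here. The paper proves a separate quadrisection-specific result, \autoref{thm:(2,1)_irreducible_L>=6}, asserting that a c-\emph{irreducible} $(2,1)$-quadrisection of a non-$S^4$ 4-manifold has $\Lcal_4^*\geq 6$; its proof relies on \autoref{prop:4section_2+1}, which analyzes two consecutive defining pairs with $d_i\leq 2$ and $d_{i\pm1}\leq 1$ and concludes c-reducibility or $X\cong S^4$. Without something playing that role, your reduction to the c-reducible case when $\Lcal_4^*<6$ is unsupported. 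Once c-reducibility is established, the paper then simply splits $\Tcal$ into two genus-one multisections and cites the Islambouli--Naylor classification; the detection theorems you invoke ($\Lcal_n^*\leq 1$, $\Lcal_3^*\leq 2$) are not the relevant tool here.

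Second, you misplace where $\#^2 S^1\times S^3$ comes from. In the paper it arises from the c-reducible side (both genus-one summands being $S^1\times S^3$), not from a ``degenerate irreducible configuration where the band moves cancel.'' Relatedly, your structural claims about the irreducible case are off: the paper shows that after \autoref{prop:4section_2+1} the only possible distance sequence for a c-irreducible $(2,1)$-quadrisection with $\Lcal_4^*(\Tcal)=6$ is $(3,0,3,0)$ (up to cyclic relabeling), so adjacent disk sets $\mathcal{D}_i,\mathcal{D}_{i+1}$ actually share a vertex, not merely differ by ``a single band move,'' and the diagonal unions $H_i\cup_\Sigma\overline{H_{i+2}}$ are genus-\emph{two} Heegaard splittings of 3-manifolds $Y_{13}$, $Y_{24}$, not genus one. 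The ``continued fraction'' step is a reasonable intuition but it is only a heuristic in your write-up; the paper instead converts the $(3,0,3,0)$ data into an explicit Kirby diagram (using the curve analysis of \autoref{prop:scc_in_genus_2}) and identifies the resulting manifold by handle slides as a (twisted) spun lens space, $S^1\times S^3\#S^2\times S^2$, or $S^1\times S^3\#S^2\widetilde{\times}S^2$. As it stands, your irreducible-case argument is a sketch with incorrect structural claims, and the reducible-case argument appeals to a lemma that does not apply.
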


\begin{proof}
\autoref{thm:L>=6} states that if $\Lcal_4^*(\Tcal)<6$, then $X$ is diffeomorphic to $S^4$, $\#^2 S^1\times S^3$, or is a connected sum of copies of $S^2\times S^2$, $\pm \mathbb{CP}^2$, and at most one copy of $S^1\times S^3$. If $\Lcal_4^*(\Tcal)=6$, then \autoref{thm:L=6} implies that $X$ is diffeomorphic to (possibly twisted) spun lens space, $S^1\times S^3\# S^2\times S^2$, or $S^1\times S^3\#S^2\widetilde{\times} S^2$. 
\end{proof}

\subsection{Bridge distance in dimension three}
The three-dimensional version of $\Lcal_n^*$ is an invariant $D(\Sigma)$ introduced by Zupan in \cite{zupan2013bridge}. 
$D(\Sigma)$ measures the distance 
between the disk sets in the bridge splitting of a knot in a closed 3-manifold. 
He showed that if $K$ is a knot and $M$ has no $S^1\times S^2$ summands, then \hbox{$(g,b)$-bridge} positions of $(M,K)$ have complexity $D(\Sigma)$ at least $g+b-1$. 
In particular, the distance zero splittings are 1-bridge unknots in $S^3$.
The main technical lemma of the present work is a result in the same vein as Zupan's. The following lemma is contained in \autoref{lem:efficient}.

\begin{lemma}
Let $\Sigma$ be a $(g,b)$-bridge splitting of an unlink in $\#^k S^1\times S^2$. Then $D(\Sigma)$ is exactly $g-k+b-c$. 
\end{lemma}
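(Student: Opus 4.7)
My plan is to prove the two inequalities $D(\Sigma)\leq g-k+b-c$ and $D(\Sigma)\geq g-k+b-c$ separately, as is standard for results of Zupan-type.

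For the upper bound, I would start with the \emph{standard} $(k,c)$-bridge splitting $\Sigma_0$ of the $c$-component unlink $L$ in $\#^{k}S^1\times S^2$. This splitting has a shared cut system on both sides --- the $k$ reducing spheres of the $S^1\times S^2$ summands together with the $c$ bridge disks pairing each trivial arc with its shadow on $\Sigma_0$ --- so $D(\Sigma_0)=0$. A Haken/Hayashi-type uniqueness statement for bridge splittings of unlinks implies that any other $(g,b)$-bridge splitting $\Sigma$ of $(\#^{k}S^1\times S^2,L)$ is obtained from $\Sigma_0$ by exactly $g-k$ stabilizations and $b-c$ elementary perturbations. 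Since each such move increases $D$ by at most one in the relevant complex, we obtain $D(\Sigma)\leq(g-k)+(b-c)$.

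For the lower bound, I would follow the essential-surface technique of Zupan in \cite{zupan2013bridge}. The complement $M\setminus L$ naturally contains $k$ essential non-separating $2$-spheres (one per $S^1\times S^2$ summand) and $c$ embedded $2$-spheres each meeting $L$ in exactly two points (one per component of the unlink). After isotoping these $k+c$ surfaces into efficient position with respect to $\Sigma$, each essential intersection curve or arc yields an edge in the dual curve complex, and these assemble into a path joining a disk on one side of $\Sigma$ to a disk on the other. A careful intersection count --- relating the number of forced intersections to the excesses $g-k$ in Heegaard genus and $b-c$ in bridge number --- then gives $D(\Sigma)\geq g-k+b-c$.

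The main obstacle will be the efficient-position step in the lower bound: arranging all $k+c$ surfaces simultaneously so that no intersection is wasted, and ensuring the resulting count gives \emph{exactly} $g-k+b-c$ rather than only an asymptotic bound. I anticipate proceeding by induction on the pair $(g-k,b-c)$. In the base case $g=k$ and $b=c$, an efficient collection of reducing spheres and unknotting disks forces $\Sigma$ to be isotopic to $\Sigma_0$, so $D(\Sigma)=0$. In the inductive step, one locates a sphere or disk meeting $\Sigma$ in a single curve or arc, producing a destabilization or deperturbation that decreases both the complexity and $D$ by exactly one, closing the induction.
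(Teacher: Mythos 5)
Your upper bound is essentially the same as the paper's: both invoke the uniqueness (Waldhausen--Hayashi type) of bridge splittings of unlinks in $\#^k S^1\times S^2$, cited in the paper as \cite[Theorem~2.2]{zupan2013bridge}, to produce a short path. That part of your proposal is sound.

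Your lower bound, however, has a directional error that is fatal. Isotoping essential spheres and twice-punctured spheres into efficient position with respect to $\Sigma$, and assembling their intersection curves and arcs into a path between the two disk sets, produces an explicit path in the dual curve complex --- which bounds the distance $D(\Sigma)$ from \emph{above}, not from below. This is the familiar Hartshorn/Scharlemann--Tomova phenomenon: essential surfaces in the complement are obstructions to high distance. Indeed, the formula $D(\Sigma)=g-k+b-c$ already tells you that adding an $S^1\times S^2$ summand or an extra unlink component \emph{decreases} $D(\Sigma)$, because each reducing sphere and each bridge sphere contributes a curve shared by both disk sets, shrinking the distance. Your intuition that the essential surfaces would ``force'' intersections and hence force large distance runs in exactly the wrong direction. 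Your inductive step has the same problem in a different guise: you would need destabilization/deperturbation to decrease $D$ by \emph{at least} one, but the move only a priori decreases it by \emph{at most} one, so the induction does not close.

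The paper's actual lower bound sidesteps all of this. Having already shown $d(P,P')\le g-k+b-c$ for an efficient defining pair $(P,P')$, one observes that at most $g-k+b-c$ of the $3g+2b-3$ curves of $P$ can move, so at least $2g+k+b+c-3>0$ curves lie in $P\cap P'$. One picks such a fixed curve $x$, cuts $\Sigma$ along $x$, caps off with (one- or two-)punctured disks according to the type of c-disk $x$ bounds and whether the resulting sphere separates, and writes $d(P,P')=d(P_1,P'_1)+d(P_2,P'_2)$ (or a single term in the non-separating cases). The inductive hypothesis gives the \emph{exact} value $g_i-k_i+b_i-c_i$ on each piece, and the bookkeeping on $(g_i,k_i,b_i,c_i)$ produces equality $d(P,P')=g-k+b-c$. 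The paper's induction is on $(g,b)$ in lexicographic order, not on $(g-k,b-c)$, and the structural content is in the case analysis of the common curve $x$ (separating vs. non-separating, cut vs. compressing, plus the admissibility edge cases), none of which appears in your outline. You should replace your essential-surface lower bound with this ``find a fixed curve, cut, and induct'' scheme.
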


We conjecture that these results can be merged together as follows. 

\begin{conjecture}
Let $K$ be a $c$-component link embedded in a compact 3-manifold $M$ with $k$ $S^1\times S^2$ summands. For any $(g,b)$-bridge position of $(M,K)$, 
\[ D(\Sigma)\geq g-k+b-c.\]
Moreover, equality holds if and only if $K$ is an unlink in $\#^k S^1\times S^2$. 
\end{conjecture}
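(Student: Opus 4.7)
The plan is to prove the lower bound $D(\Sigma) \geq g-k+b-c$ by adapting Zupan's geodesic / Euler-characteristic argument from \cite{zupan2013bridge} to the setting of 3-manifolds with $S^1 \times S^2$ summands, and then derive the equality case by combining that argument with the preceding lemma. I proceed by lexicographic induction on $(g,b)$. For the inductive step, suppose for contradiction that there is a $(g,b)$-bridge position $\Sigma$ of $(M,K)$ with $D(\Sigma) = d < g - k + b - c$. Fix a geodesic $\alpha_0, \alpha_1, \ldots, \alpha_d$ in the curve complex of $\Sigma \setminus K$ with $\alpha_0$ bounding a compressing disk $D_-$ in the negative handlebody (minus its bridge arcs) and $\alpha_d$ bounding $D_+$ in the positive handlebody.

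Following the Scharlemann--Tomova style construction used by Zupan, I would build a properly embedded surface $S$ in $M \setminus K$ by double-curve summing $D_-$ and $D_+$ along annular cobordisms in $\Sigma \setminus K$ guided by the geodesic. A careful Euler-characteristic accounting shows that when $d$ is strictly below $g-k+b-c$, the surface $S$ must contain a component that is either (i) a reducing sphere for the Heegaard surface disjoint from $K$, (ii) a non-separating sphere realizing an $S^1 \times S^2$ summand of $M$, (iii) a two-sphere meeting $K$ in two points that decomposes $(M,K)$ as a connected sum, or (iv) a cancelling disk bounded by a bridge arc union a subarc of $\Sigma$. In each case one destabilizes $\Sigma$ to a strictly simpler bridge splitting $\Sigma'$ of a simpler pair $(M', K')$, and verifies that the inequality $D(\Sigma') < g' - k' + b' - c'$ persists under the updated parameters, contradicting the inductive hypothesis.

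For the equality case, the ``if'' direction is handed to us by the preceding lemma. For ``only if'', suppose $D(\Sigma) = g - k + b - c$. Running the same surface construction either produces a destabilization descending to a splitting still saturating the bound, or certifies that every compressing disk on each side is already standard. Induction on $(g,b)$ then reduces to a minimal splitting, which the Euler-characteristic analysis forces to be the trivial $(k,c)$-bridge splitting of a $c$-component unlink in $\#^k S^1 \times S^2$.

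The main obstacle is the Euler-characteristic bookkeeping for $S$ in the presence of $S^1 \times S^2$ summands. Zupan's original argument (the $k=0$ case) leverages the absence of $S^1 \times S^2$ summands to rule out non-separating reducing spheres outright; our setting allows up to $k$ of them, and the refined bound $g - k + b - c$ should encode exactly the amount by which each such summand relaxes the distance constraint. Making this precise requires the structural analysis of the sphere and disk components obtained from the geodesic to be compatible with primitive-disk-type decompositions of Heegaard splittings of $\#^k S^1 \times S^2$, and this interaction must be tracked simultaneously with the $b$ bridge arcs and the $c$ link components. Pinning down exactly how each new summand (handle, $S^1\times S^2$ factor, or extra bridge) perturbs the inequality by $\pm 1$ in the destabilization step is the most delicate piece of the argument, and explains why the conjecture remains open.
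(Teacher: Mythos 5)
This statement is posed in the paper as a \emph{conjecture}, not a theorem: the authors explicitly say ``We conjecture that these results can be merged together as follows,'' immediately after proving (via \autoref{lem:efficient}) only the special case where $K$ is an unlink in $\#^k S^1\times S^2$. There is therefore no proof in the paper against which to compare your attempt.

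What you have written is a plan of attack rather than a proof, and you say as much in your final paragraph. The sketch is in the right spirit: Zupan's argument in \cite{zupan2013bridge} does proceed by taking a geodesic in the curve complex of the punctured Heegaard surface, doing a double-curve sum of compressing disks along annular pieces guided by the geodesic, and extracting a destabilizing sphere or disk via an Euler-characteristic count. You correctly identify that the sole obstruction to porting that argument is the presence of essential, possibly non-separating, spheres realizing the $S^1\times S^2$ summands. Zupan's hypothesis (no essential spheres) rules these out by fiat, so the surface components produced by the double-curve sum must be caps, bridge disks, or reducing spheres that can be absorbed; with $k>0$ the bookkeeping must instead track each non-separating sphere as contributing a $-1$ to the target bound $g-k+b-c$, and you would also need to verify that a destabilization passing through such a sphere strictly decreases $(g,b)$ \emph{and} decrements $k$ by exactly one, so that the inductive hypothesis still applies to a pair with the correct parameters. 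None of that is carried out here, and this is not a small technicality: the uniqueness of Heegaard splittings of $\#^k S^1\times S^2$ (Waldhausen-type results) enters in an essential way, as the paper's own proof of \autoref{lem:efficient} illustrates, and it is not obvious that the resulting spheres are in ``efficient'' position relative to the bridge arcs. You have, in effect, restated the reason the paper leaves this open rather than closed it; your concluding sentence (``explains why the conjecture remains open'') is accurate. If you want to make progress, a concrete first step would be to prove the lower bound alone for the case $k=1$, $c=1$, $b$ arbitrary, by adapting the surface construction and showing that any non-separating sphere produced can be isotoped off the bridge arcs, since $K$ is null-homologous; that is already a genuine extension beyond what the paper or Zupan establishes.
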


\subsection*{Acknowledgements}
The authors thank Maggy Tomova and the University of Central Florida for making this research and collaboration possible. SB would additionally like to thank the Max Planck Institute for Mathematics in Bonn for its hospitality and financial support. PP is supported by the Pacific Institute for the Mathematical Sciences (PIMS). The research and findings may not reflect those of the Institute.

\setcounter{tocdepth}{2}
\tableofcontents

\section{Background and preliminaries} \label{sec:definitions}

We work in the smooth category and are interested in embeddings of objects up to isotopy. The symbol $X$ is reserved for 4-manifolds which are smooth, orientable, closed, and connected. $F$ will always be a closed surface embedded in $X$.
We denote by $\Sigma=\Sigma_{g,p}$ an orientable surface of genus $g$ and with $p$ punctures. If we write  $\Sigma_g$ we mean that $\Sigma$ is closed. 

A three-dimensional \textit{handlebody} $H$ is the result of adding 1-handles to a 3-ball. A \textit{trivial tangle} is a collection of properly embedded arcs $T\subset H$ which can be simultaneously pushed (rel. $\partial T$) to the boundary of $H$ as embedded arcs. We will write $(H, T)$ (or just $T$) to refer to a trivial tangle $T\subset H$. Such projections of $T$ onto $\partial H$ are called \textit{shadows}. 
A \textit{Heegaard splitting} is a decomposition of a closed orientable 3-manifold $M$ as the union of two handlebodies. We say that a link $L\subset M$ is in \textit{bridge position} if $(M,L)$ can be decomposed as the union of two trivial tangles $(M,L)=(H_1,T_1)\cup (H_2,T_2)$ glued along their boundaries. The double intersection is a punctured surface $\Sigma$ which will play a key role in this work. If $\Sigma$ has genus $g$ and $2b$ punctures, we say that $L$ is in $(g,b)$-bridge position. See \cite{Schultens3m} for an introduction to Heegaard splittings and bridge decompositions. 

\subsection{Complexes of curves} \label{subsec:complexes} 

Fix $g,b\geq 0$ and $\Sigma=\Sigma_{g,2b}$. 
A \textit{pair of pants} is a planar surface with Euler characteristic equal to $-1$:
a thrice-punctured sphere, a disk with two punctures, a once-punctured annulus, or a sphere with three open disks removed.  
A \textit{pants decomposition} of $\Sigma$ is a collection of simple closed curves that split the surface into pairs of pants. There are $3g+2b-3$ curves in any pants decomposition. We say that $\Sigma$ is an \textit{admissible} surface if $3g+2b-3>0$. 

The \textit{dual curve graph} $\mathcal{C}^*(\Sigma)$ of a surface $\Sigma$ is a simplicial complex where each vertex is a pants decomposition of the surface. 
Two vertices $a,b$ are connected by an edge if their representatives differ by a single curve (see \cite[Figure~3]{zupan2013bridge}). Let $d^*(a,b)$ denote the graph length of a geodesic in $\mathcal{C}^*(\Sigma)$ connecting $a$ and $b$. Given two subsets $A,B\subset \mathcal{C}^*(\Sigma)$, the \textit{distance} $d^{*}(A,B) = \min\lbrace d^{*}(a,b) \mid a\in A, b\in B\rbrace$.

The \textit{pants graph} $\mathcal{P}(\Sigma)$ is a subgraph of $\mc{C}^*(\Sigma)$. Two pants decompositions are connected by an edge if their representatives differ one of two moves: \textit{A-move} or \textit{S-move}. An A-move replaces a curve with another curve that intersects it geometrically twice but algebraically zero, and an S-move replaces a curve with another curve that intersects it once. See \autoref{fig:moves}. As before, let $d^{\mc{P}}(a,b)$ denote the graph length of a geodesic in $\mathcal{P}(\Sigma)$ connecting two vertices $a$ and $b$, and for two subsets $A,B\subset \mathcal{P}(\Sigma)$, let $d^{\mc{P}}(A,B) = \min\lbrace d^{\mc{P}}(a,b) \mid a\in A, b\in B\rbrace$.

\begin{figure}[h]
\centering
\includegraphics[width=16cm]{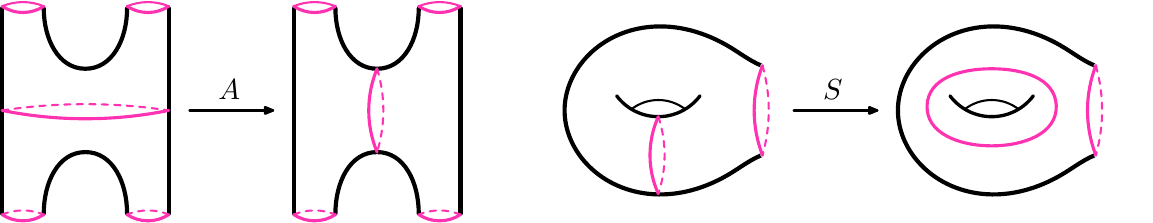}
\caption{Local model of an A-move and S-move.}
\label{fig:moves}
\end{figure}

Let $(H,T)$ be a tangle. The boundary of $H$ is a $2b$ punctured surface $\Sigma$ of genus $g$. A curve in a surface $\Sigma$ is \textit{essential} if it does not bound a disk or once-punctured disk in $\Sigma$. A \textit{c-disk} is an embedded disk in $H$ whose boundary is an essential simple closed curve on $\Sigma$ and whose interior intersects $T$ at most once. If a c-disk does not intersect the tangle we call it a \textit{compressing disk}, and if it intersects the tangle once we call it a \textit{cut disk}. We will sometimes refer to the boundary of a compressing (resp. cut) disk as a compressing curve (resp. cut curve).

We are interested in subcomplexes spanned by curves that bound disks in trivial tangles. We say that a vertex $v$ in the dual curve graph or the pants graph belongs to the \textit{disk set} $\mathcal{D}(H,T)$ if each curve in the pants decomposition bounds a c-disk for $(H,T)$. 

\subsection{Multisections} \label{subsec:multisections} 

Trisections, introduced in 2016 by Gay and Kirby, are decompositions of 4-manifolds into three four-dimensional handlebodies. 
In 2020, Islambouli and Naylor generalized this idea to multisections, where the number of pieces is arbitrary.

\begin{definition}[\cite{gay2016trisecting,islambouli2020multisections}]\label{def:multisection}
Let $X$ be a closed, orientable $4$-manifold. A \textit{multisection} of $X$ is a decomposition $X = X_1 \cup X_2 \cup \cdots \cup X_n$ such that for each $i,j\in\{1,\dots, n\}$, 
\begin{enumerate}
\item $X_i\cong \natural^{k_i}(S^1\times B^3)$,
\item $X_1\cap \cdots \cap X_n=\Sigma$ is a genus $g$ surface, 
\item $X_i \cap X_{i+1}=H_i$ is a three-dimensional handlebody, and
\item $X_i\cap X_j=\Sigma$ whenever $|i-j|>1$.
\end{enumerate}
\end{definition}

The indices in the above definition are treated modulo $n$, so $X_n\cap X_1=H_n$ is also a three-dimensional handlebody. Sometimes we will say \textit{$n$-section} to denote a multisection with $n$ sectors, \textit{trisection} for a multisection with $n=3$, and \textit{quadrisection} for a multisection with $n=4$. We will use $\mathcal{T}$ to refer to multisections, including trisections.

The data $(g,k)$ associated with a multisection $\Tcal$,  where $k$ denotes the vector $(k_1,\dots, k_n)$, is the simplest measure of complexity of $X$. For example, it recovers the Euler characteristic: \[\chi(X)=2+(n-2)g-\sum_i k_i.\]
It follows from the definition that the boundary of each 4-dimensional piece admits a Heegaard splitting $\partial X_i=H_{i-1} \cup_
\Sigma H_{i}$. 
A \textit{multisection diagram} is an ordered collection $(\Sigma; C_1,\dots, C_n)$ where $\Sigma$ is a genus $g$ surface,
$C_1,\dots, C_n$ are cut systems for $\Sigma$, and each triple $(\Sigma; C_{i-1}, C_{i})$ determines a Heegaard splitting for $\#^{k_i}(S^1\times S^2)$.

Meier and Zupan showed that knotted surfaces in trisected $4$-manifolds can be isotoped in such a way that they inherit their own trisection \cite{meier2017bridgeS4,meier2018bridge4manifolds}. This notion was extended to multisections by Islambouli, Karimi, Lambert-Cole, and Meier. 

\begin{definition}[\cite{meier2018bridge4manifolds,islambouli2020multisections}] Let $F$ be an embedded surface in a closed 4-manifold $X$. 
A \textit{$(g,k;b,c)$-bridge multisection} of $(X,F)$, where $k=(k_i)_{i=1}^n$ and $c=(c_i)_{i=1}^n$, is a decomposition \[(X,F)=(X_1,D_1)\cup (X_2,D_2)\cup\cdots \cup (X_n,D_n)\] 
such that $X=\cup^{n}_{i=1} X_i$ is a $(g,k)$-multisection of $X$ satisfying, for each $i,j\in \{1,\dots, n\}$, 
\begin{enumerate}
\item $X_i\cap F=D_i$ is a collection of $c_i$ boundary parallel disks in $X_i$, and 
\item $H_i\cap F=T_i$ is a $b$-strand trivial tangle in $H_i$. 
\end{enumerate}
\end{definition}

The common intersection $\cap_i X_i=\Sigma$ intersects $F$ in $2b$ points. When $\Tcal$ is a bridge multisection for $(X,F)$, we take $\Sigma$ as a $2b$-punctured surface. By definition, $L_i=F\cap \partial X_i$ is $c_i$-component unlink in $(g,b)$-bridge position $(\partial X_i,L_i)=(H_{i-1},T_{i-1})\cup_{\Sigma} (H_{i},T_{i})$. 
As each tangle $(H_i,T_i)$ is trivial, we can find a collection of embedded shadows $s_i\subset \Sigma$ determining the tangle $T_i$. With this new data, we can describe a bridge multisection using a 2-dimensional diagram. 
A \textit{multisection shadow diagram} of $\Tcal$ is a tuple $(\Sigma;(C_i)_i, (s_i)_i)$ where $C_i$ is a cut system for $H_i$ and $s_i$ is a collection of shadows for $T_i$ satisfying $s_i\cap C_i=\emptyset$. It follows that $(\Sigma; C_{i-1}\cup s_{i-1}, C_i\cup s_i)$ represents a $c_i$-component unlink in $\#^{k_i}S^1\times S^2$. 

\begin{remark}
For expository purposes, we will present the arguments in this paper for $\Tcal$ a bridge multisection of a pair $(X,F)$ where $F$ is an embedded surface in $X$. If $F$ is empty, then $\Tcal$ is a multisection of $X$ as in \autoref{def:multisection}. 
\end{remark}

\subsection{New from old: c-reducible multisections}\label{subsec:new_from_old}
Let $(X_1,F_1)$ and $(X_2,F_2)$ be two (4-manifold, surface) pairs. Fix a $(g_i,b_i)$-bridge multisection $\Tcal_i$ for $(X_i,F_i)$ with central surface $\Sigma_i$, for $i=1,2$. 
We describe ways to build new pairs $(X,F)$ from this given data. 
\vspace{1em}

\textbf{Distant sum of two pairs.} This is obtained by taking the connected sum of $X_1$ and $X_2$ along 4-balls disjoint from $F_1$ and $F_2$. One can obtain a multisection for this by choosing the 4-ball to be a small neighborhood of a point in $\Sigma_i$ disjoint from the punctures. The complexities of this new multisection satisfy $g=g_1+g_2$ and $b=b_1+b_2$. 
\vspace{1em}

\textbf{Connected sum of two pairs.} This is obtained when the 4-balls in $X_i$ intersect each $F_i$ in a small disk. The new surface is $F=F_1\#F_2$. A multisection is obtained by choosing the 4-balls to be neighborhoods of punctures in $\Sigma_i$. The new genus and the bridge numbers are $g=g_1+g_2$ and $b=b_1+b_2-1$. 
\vspace{1em}

\textbf{Connected sum with $(S^1\times S^3, \emptyset)$.} Take two points $p,q$ in $X_1$ away from $F_1$. One can obtain the connected sum $(X_1,F_1)\#(S^1\times S^3,\emptyset)$ by identifying small neighborhoods of $p$ and $q$. A multisection for such pair can be obtained by choosing $p,q$ in $\Sigma$ away from the punctures. This is equivalent to taking the connected sum of $\Tcal_1$ with a genus one multisection for $(S^1\times S^3,\emptyset)$.
\vspace{1em}

\textbf{Self-tubing of $(X_1,F_1)$}. Take two 4-ball neighborhoods of two points $\{p,q\}$ in $F_1$. The self-tubing of $(X_1,F_1)$ is obtained by identifying these neighborhoods. The new 4-manifold is $X\# S^1\times S^3$ and the new surface is the result of adding a product tube to $F_1$. More precisely, if $U$ is an unknot in $S^3$, $(X,F)=(X_1,F_1)\#(S^1\times S^3,S^1\times U)$. In order to obtain a multisection for $(X,F)$, choose $p$ and $q$ to be two punctures of $\Sigma_1$ with the property for each $i$, $p$ and $q$ belong to different arcs of the tangle $T_i$. One can check that $g=g_1+1$ and $b=b_1-1$. 
\vspace{1em}

The multisections built above have curves bounding c-disks in every tangle. Such multisections are called \textit{c-reducible}, and such curves are called \textit{c-reducible}.
By definition, c-reducible curves induce 2-dimensional spheres in $\partial X_i= H_{i-1}\cup H_i$ intersecting $F\cap \partial X_i$ in at most two points. In particular, since $F\cap \partial X_i$ is an unlink, we conclude the following. 

\begin{lemma}
Let $x$ be a c-reducing curve for $\Tcal$. Then $x$ bounds the same kind of c-disk on every tangle $(H_i,T_i)$; that is, either all compressing or all cut disks. 
\end{lemma}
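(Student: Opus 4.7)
The plan is to package the two c-disks on either side of each Heegaard surface into a 2-sphere and then extract a parity constraint from the fact that the trace of $F$ on each boundary is an unlink, hence null-homologous mod $2$.

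Concretely, for each $i = 1, \ldots, n$ choose a c-disk $D_i \subset H_i$ with $\partial D_i = x$, and set $d_i := |D_i \cap T_i| \in \{0,1\}$, so $d_i = 0$ precisely when $D_i$ is a compressing disk and $d_i = 1$ precisely when it is a cut disk. Since $D_{i-1}$ and $D_i$ share the common boundary $x \subset \Sigma$ and lie on opposite sides of $\Sigma$ in $\partial X_i = H_{i-1} \cup_\Sigma H_i$, their union $S_i := D_{i-1} \cup_x D_i$ is an embedded $2$-sphere in $\partial X_i$. By construction,
\[
|S_i \cap L_i| \;=\; |D_{i-1}\cap T_{i-1}| + |D_i \cap T_i| \;=\; d_{i-1} + d_i.
\]

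Now I would invoke the hypothesis that $L_i = F \cap \partial X_i$ is a $c_i$-component unlink in $\partial X_i \cong \#^{k_i}(S^1 \times S^2)$. Being an unlink, each component of $L_i$ bounds an embedded disk in $\partial X_i$, so $[L_i] = 0 \in H_1(\partial X_i;\mathbb{Z}/2)$, and therefore the mod-$2$ intersection number satisfies
\[
d_{i-1}+d_i \;\equiv\; [S_i]\cdot[L_i] \;\equiv\; 0 \pmod{2}.
\]
This forces $d_{i-1} \equiv d_i \pmod 2$ for every $i$ (indices mod $n$). Because each $d_i \in \{0,1\}$ and the congruences close up cyclically, all the $d_i$ must be equal, giving the dichotomy in the statement.

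The step I expect to be the main technical point is the homological vanishing of $[L_i]$ in $\partial X_i$: one must use specifically that an unlink in $\#^{k_i}(S^1 \times S^2)$ bounds disjoint disks in the $3$-manifold (equivalently, that the boundary-parallel disks making up $D_i \subset X_i$ witness $L_i$ as the boundary of disjoint disks in $\partial X_i$), rather than merely being a collection of locally unknotted circles. Once that is pinned down, the rest is a parity chase around the cyclic sequence of sectors.
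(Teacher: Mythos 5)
Your argument is essentially the one the paper intends: the paper does not give a displayed proof, but the sentence immediately preceding the lemma sketches exactly this reasoning---the two c-disks on either side of $\Sigma$ assemble into a $2$-sphere in $\partial X_i$ meeting the unlink $L_i$ in at most two points, and since an unlink is null-homologous the count must be even. Your write-up simply makes the parity and homology steps explicit, and the cyclic chain of congruences $d_{i-1}\equiv d_i \pmod 2$ with $d_i\in\{0,1\}$ is the correct way to finish.
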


There are four kinds of c-reducible curves $x\subset \Sigma$. 
If $x$ bounds compressing disks and separates $\Sigma$, then $\mc T$ is the multisection of the distant sum of two pairs. 
If $x$ bounds separating cut disks, then $\mc T$ is the multisection of a connected sum of two pairs. 
If $x$ bounds compressing disk and does not separate, then $\mc T$ is the connected sum of a smaller multisection with a genus one multisection for $(S^1\times S^3, \emptyset)$. 
If $x$ bounds non-separating cut disks, then $\mc T$ is the self-tubing of a smaller multisection.

As its name suggests, c-reducible multisections can be simplified. Given a c-reducing curve for $\Tcal$, a \textit{c-reduction} is the process of filling $\overline{\Sigma-N(x)}$ with $m$-punctured disks \hbox{($m\leq 1$)} to obtain one or two multisections, depending on whether $x$ separates $\Sigma$ or not, and whether $x$ cuts or compresses. A multisection $\Tcal$ is \textit{completely decomposable} if it can be c-reduced into a collection of multisections of complexities $(g,b)\in \{(0,1),(0,2),(1,0)\}$. In other words, completely decomposable multisections are c-connected sums and self-tubings of bridge multisections with the lowest complexities.

\begin{lemma}\label{lem:completely_decomposable}
Let $\Tcal$ be a completely decomposable bridge multisection for $(X,F)$. Then 
\[ (X,F)=(S^4,F')\#^l(S^1\times S^3,S^1\times U)\#\left(\#^hS^1\times S^3\#^iS^2\times S^2\#^j\mathbb{CP}^2\#^k\overline{\mathbb{CP}^2},\emptyset\right),
\]
where $F'$ is an unknotted surface, $U$ is an unknot in $S^3$, and $l,h,i,j,k\geq 0$. 
In particular, if $X$ has no $S^1\times S^3$ summands, then $F$ is an unknotted surface in $X\cong\#^iS^2\times S^2\#^j\mathbb{CP}^2\#^k\overline{\mathbb{CP}^2}$.
\end{lemma}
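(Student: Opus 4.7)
My plan is to induct on the number of c-reductions needed to decompose $\Tcal$ into bridge multisections of minimal complexities $(g,b)\in \{(0,1),(0,2),(1,0)\}$. In the base case I classify each minimal-complexity piece by hand, and in the inductive step I reverse a c-reduction using the four building operations catalogued in \autoref{subsec:new_from_old}.

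For the base case I treat the three triples separately. When $(g,b)=(0,1)$, the central surface is a $2$-punctured sphere, each four-dimensional sector $X_i$ is a $4$-ball, and each $(H_i,T_i)$ is a single unknotted strand in a $3$-ball; hence $X=S^4$ and $F$ is the standard unknotted $2$-sphere. When $(g,b)=(0,2)$, the sectors are again $4$-balls, so $X=S^4$, and a direct check (together with the bridge trisection classification of Meier and Zupan in the $n=3$ case, extended to general $n$) shows that $F$ is an unknotted $2$-sphere with a two-bridge presentation. When $(g,b)=(1,0)$ there is no surface and the central surface is a torus; a genus-one $n$-section is determined by its gluing data, and by the classification of genus-one multisections (built on the genus-one trisection case giving $S^4$, $S^1\times S^3$, $S^2\times S^2$, and $\pm\mathbb{CP}^2$) the resulting $X$ is one of these five standard pieces. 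Each falls into the claimed form.

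For the inductive step, choose a c-reducing curve $x$ for $\Tcal$. The preceding lemma ensures $x$ bounds the same type of c-disk in every tangle, and the four cases in \autoref{subsec:new_from_old} then identify $(X,F)$ as built from one or two strictly smaller completely decomposable pairs via distant sum, connected sum, external $S^1\times S^3$ summand, or self-tubing. By the inductive hypothesis each smaller pair has the asserted form. A distant sum or an external $S^1\times S^3$ summand only enlarges the empty-surface factor; a connected sum of pairs replaces $F'$ by $F'\#F''$, which remains an unknotted surface in $S^4$ since both summands are unknotted; and a self-tubing appends one more copy of $(S^1\times S^3,S^1\times U)$, incrementing $l$ by one. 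Thus $(X,F)$ inherits the desired form, and the ``in particular'' conclusion is immediate because the hypothesis forbids any $S^1\times S^3$ summand, forcing $l=h=0$.

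The step I expect to be most delicate is the $(g,b)=(1,0)$ base case, which requires a classification of genus-one $n$-sections of closed $4$-manifolds for arbitrary $n$, not just for trisections. This should follow either by direct enumeration of gluings of $n$ solid tori along a torus (a low-dimensional calculation) or by invoking a destabilization result of Islambouli--Naylor that reduces the $n$-section case to the genus-one trisection case classified earlier. A secondary technical point is ensuring that unknottedness of surfaces is preserved under connected sum and that the self-tubing operation is correctly identified with the $(S^1\times S^3, S^1\times U)$ summand; both assertions are built into the constructions of \autoref{subsec:new_from_old}.
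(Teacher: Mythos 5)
Your proof is correct and follows essentially the same route as the paper: classify the minimal-complexity pieces $(g,b)\in\{(0,1),(0,2),(1,0)\}$ and build up via the four inverse-c-reduction operations. You spell out the induction that the paper leaves implicit in "the result follows," and you correctly identify that the genus-one and $(0,2)$ base cases are where the real content lies (the paper handles these by citing \cite{islambouli2022toric} and saying the argument can be repeated for $(0,2)$); the only small slip is attributing the genus-one multisection classification to Islambouli--Naylor rather than to \cite{islambouli2022toric}, but this does not affect the structure of the argument.
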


\begin{proof}
In \cite{islambouli2022toric}, the authors proved that closed 4-manifolds having genus one multisections have such form. One can repeat their argument to check that bridge multisections with $(g,b)=(0,2)$ represent unknotted surfaces in $S^4$. The result follows. 
\end{proof}

We end this section by discussing two situations when the multisection is c-reducible. 

\begin{lemma}\label{lem:stabilization}
Let $\mathcal{T}$ be a bridge multisection with central surface $\Sigma$. Let $x,y\subset \Sigma$ be simple closed curves with $|x\cap y|=1$. 
Suppose that 
for every $i\in \{1,\dots, n\}$ one of $\{x,y\}$ bounds a compressing disk in $(H_i,T_i)$. 
Then $\Tcal$ is c-reducible.
\end{lemma}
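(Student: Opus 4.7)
The plan is to identify a single curve $\gamma \subset \Sigma$ that bounds a compressing disk in every tangle $(H_i, T_i)$, thereby exhibiting a c-reducing curve for $\Tcal$. The natural candidate is $\gamma = \partial N(x \cup y)$, the boundary of a small regular neighborhood of $x \cup y$ in $\Sigma$. Since $\abs{x \cap y} = 1$, the neighborhood $N(x \cup y)$ is a once-holed torus, so $\gamma$ is a single separating simple closed curve.

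The key steps are the following. First, I would verify that $\gamma$ is essential in $\Sigma$: it bounds a once-holed torus on one side and a surface of genus $g-1$ with $2b$ punctures and one boundary component on the other; the latter is a disk only in the degenerate case $(g,b) = (1,0)$ and, since $2b$ is even, is never a once-punctured disk. So on any admissible central surface $\gamma$ is essential. Second, I would construct a compressing disk for $\gamma$ in each $(H_i, T_i)$ by a tubing argument. Fix $i$ and assume without loss of generality that $x$ bounds a compressing disk $D_x \subset H_i$ disjoint from $T_i$. Take two parallel push-offs $D_x^{\pm}$ of $D_x$ whose boundaries $x^{\pm}$ cobound a thin annulus $A$ around $x$ in $\Sigma$. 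The surface $P = N(x \cup y) \setminus \interior(A)$ is a pair of pants with boundary $x^+ \cup x^- \cup \gamma$. Pushing $P$ slightly into $H_i$ and capping off its $x^{\pm}$-boundaries with $D_x^{\pm}$ yields an embedded disk $D_\gamma \subset H_i$ with $\partial D_\gamma = \gamma$. Because $P$ can be chosen as a small neighborhood of $x \cup y$ it avoids the $2b$ punctures of $\Sigma$, and since $D_x$ is disjoint from $T_i$ so are its parallel copies; hence $D_\gamma$ is disjoint from $T_i$ and is a compressing disk for $\gamma$ in $(H_i, T_i)$. The symmetric argument handles the tangles in which $y$ rather than $x$ bounds a compressing disk.

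The main obstacle is the technical verification that the push-off of $P$, joined to the parallel copies $D_x^{\pm}$, truly yields an embedded disk disjoint from the tangle. I would carry this out inside a collar neighborhood $\Sigma \times [0,\epsilon] \subset H_i$: place the pushed copy of $P$ at some interior level $\epsilon/2$, reconnect its boundaries back to the corresponding curves in $\Sigma \times \{0\}$ via short vertical annuli, and then attach $D_x^{\pm}$ along $x^{\pm}$. Tracking the Euler characteristic of the resulting surface through these gluings confirms it is a disk, and disjointness from $T_i$ follows since each component piece lies either in $\Sigma \times [0,\epsilon]$ away from the punctures or in a parallel copy of $D_x$. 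Because this produces a compressing disk for $\gamma$ in every tangle, $\gamma$ is a c-reducing curve and $\Tcal$ is c-reducible.
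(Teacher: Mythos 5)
Your proof is correct and takes exactly the paper's approach: the paper's proof is the single sentence that $\partial N(x\cup y)$ is a c-reducing curve, and you supply the tubing construction and the embeddedness and disjointness verifications that justify it. You also correctly flag the only subtlety the paper glosses over, namely that $\gamma = \partial N(x\cup y)$ is inessential precisely when $(g,b)=(1,0)$, a case excluded by admissibility of the central surface.
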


\begin{proof}
The boundary of the neighborhood of $x\cup y$ is a c-reducing curve for $\Tcal$. 
\end{proof}

\begin{lemma}\label{lem:(12)red_implies_cred}
Let $\Tcal$ be a bridge multisection with central surface $\Sigma$. Let $x\subset \Sigma$ be a simple closed curve bounding a compressing disk for $(H_i,T_i)$ for $i=1,\dots, n-1$. Suppose $x$ bounds a disk in the handlebody $H_n$ that intersects the tangle $T_n$ in two interior points belonging to distinct connected components of $T_n$. Then $\Tcal$ is c-reducible.
\end{lemma}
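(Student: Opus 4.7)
The plan is to produce a c-reducing curve by boundary-sliding the disk $D_n$ along one of its intersecting strands, through a puncture of $\Sigma$. Let $\alpha,\beta$ denote the two distinct components of $T_n$ meeting $D_n$ at interior points $p\in\alpha$ and $q\in\beta$, and fix an endpoint $a\in\Sigma$ of $\alpha$. First, I would boundary-slide $D_n$ along the subarc of $\alpha$ from $p$ to $a$, pushing the intersection point $p$ through the puncture $a$; this produces a properly embedded disk $D_n'\subset H_n$ whose boundary $x'$ is obtained from $x$ by a local finger-move through $a$, and which meets $T_n$ transversely only at $q$. In particular, $D_n'$ is a cut disk for $(H_n,T_n)$.

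For each $i\in\{1,\dots,n-1\}$, I would modify the compressing disk $D_i\subset H_i$ (with $\partial D_i=x$) into a cut disk $D_i'$ with $\partial D_i'=x'$. The curves $x$ and $x'$ cobound on $\Sigma$ a bigon whose interior contains only the puncture $a$; pushing this bigon slightly off $\Sigma$ into $H_i$ yields a disk $B_i\subset H_i$ which transversely intersects the strand of $T_i$ at $a$ in exactly one point and is otherwise disjoint from $T_i$. The union $D_i':=D_i\cup_x B_i$ is then a cut disk for $(H_i,T_i)$ bounded by $x'$.

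Provided $x'$ is essential on $\Sigma$, it bounds a c-disk (in fact a cut disk) in every tangle, so $x'$ is c-reducing and hence $\Tcal$ is c-reducible. The main obstacle is therefore the essentiality of $x'$. If $x$ is non-separating, then so is $x'$ and the conclusion is immediate. If $x$ is separating, then $x'$ can fail to be essential only when the side of $x$ containing $a$ is a twice-punctured disk. To handle this exceptional configuration I would vary the boundary-slide: repeating the argument with the other three endpoints among $\{a, a', b, b'\}$ of $\alpha$ and $\beta$ produces four candidate curves, and if none of them is essential then both endpoints of $\alpha$ (or of $\beta$) must lie on the same side of $x$; but in that situation the corresponding strand can be isotoped in $H_n$ so that it no longer crosses $D_n$, leaving a disk in $H_n$ with boundary $x$ and exactly one interior intersection with $T_n$. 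Then $x$ itself is a c-reducing curve, and $\Tcal$ is c-reducible.
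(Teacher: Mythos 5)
Your construction is essentially the same as the paper's: push $x$ through a puncture $a$ of $\Sigma$ to produce a curve $x'$ that bounds cut disks in every tangle. Your boundary-slide of $D_n$ along a strand and the push of the once-punctured annulus between $x$ and $x'$ into each $H_i$ are a disk-level rephrasing of the paper's argument with shadow arcs, and they give the same cut disks.

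You are right to flag essentiality of $x'$ as a point that needs justification (the paper's proof does not address it explicitly), but your resolution of the bad case has a genuine gap. When $x$ is separating, the disk $D_n$ is a \emph{separating} disk in the handlebody $H_n$; since $D_n\cap\alpha$ is a single point, a parity count forces the two endpoints of $\alpha$ to lie on \emph{opposite} sides of $x$, and likewise for $\beta$. So the configuration you invoke --- ``both endpoints of $\alpha$ (or of $\beta$) on the same side of $x$'' --- can never occur, and your final step (isotoping a strand off $D_n$ to exhibit $x$ itself as a cut curve) is arguing from an impossible hypothesis. What actually follows from ``all four pushes produce inessential curves'' is that \emph{both} sides of $x$ are twice-punctured disks, i.e.\ $\Sigma=\Sigma_{0,4}$ and $(g,b)=(0,2)$. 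That residual case is not handled by your argument (nor, as far as I can tell, by the paper's). For every other $(g,b)$ the situation is easy: since the endpoints of $\alpha$ are on opposite sides of $x$, at most one of those sides can be a twice-punctured disk, so simply choose the endpoint on the other side; your first two paragraphs then complete the proof, and the case analysis in the last paragraph is unnecessary.
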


\begin{proof}
By assumption, $x$ bounds a compressing disk in each handlebody $H_i$. Thus, if we let $x$ pass through some punctures of $\Sigma$ we will still get a disk in each handlebody $H_i$. In what follows, we will do this to ensure that such disks intersect each tangle $T_i$ once. By assumption, there exist shadows $s_j\subset \Sigma$ for $T_j$ satisfying the property that $x\cap s_i$ is empty for $i=1,\dots, n-1$ and two points for $i=n$. We depict this in \autoref{fig:almost_reducible}(a). Let $a\subset s_n$ be an arc connecting a point in $x\cap s_n$ with a puncture of $\Sigma$. Given that the points in $x\cap s_n$ belong to distinct arcs of $T_n$, we obtain that the arc $a$ is properly embedded in $\overline{\Sigma-x}$. If the interior of $a$ intersects other shadows $s_i$, we can slide such shadows over $x$ and remove the intersections as in \autoref{fig:almost_reducible}(b). To end, use the arc $a$ to push $x$ through a puncture in $\Sigma$ as in \autoref{fig:almost_reducible}(c). This will create a new curve $x'$ intersecting every shadow $s_i$ in one point around such puncture. Hence, $x'$ is a c-reducing curve for $\Tcal$.
\end{proof}

\begin{figure}[h]
\centering
\includegraphics[width=14cm]{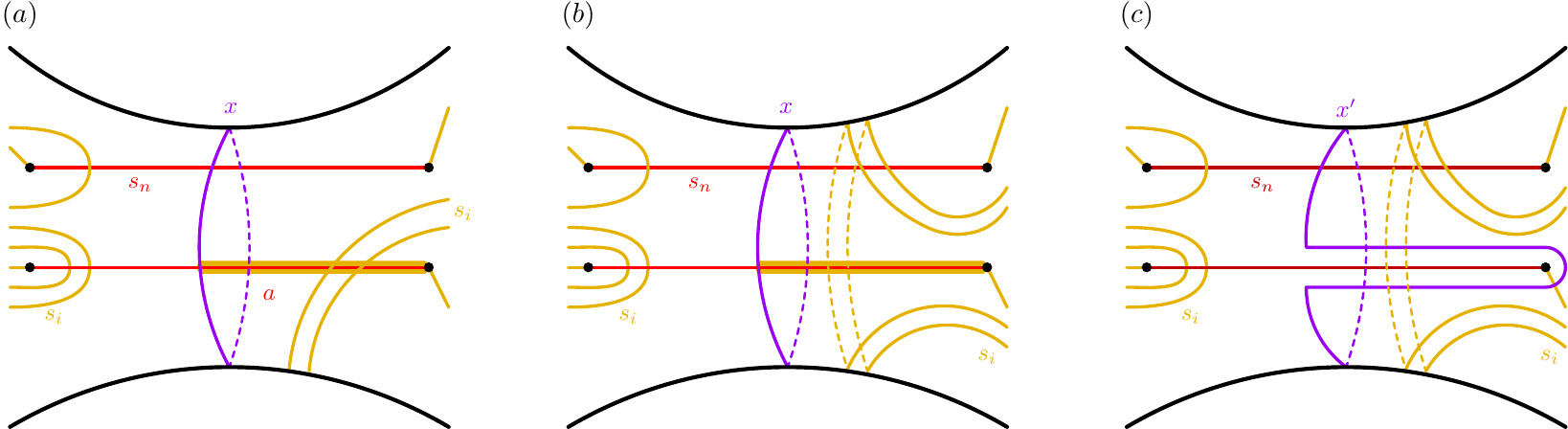}
\caption{How to alter $x$ to find a c-reducing curve.}
\label{fig:almost_reducible}
\end{figure}

\section{New invariants of pairs \texorpdfstring{$(X,F)$}{(X,F)}} \label{sec:invariants}

In this paper, we use the graphs $\mc{C}^*(\Sigma)$ and $\Pcal(\Sigma)$ (see \autoref{subsec:complexes}) to study knotted surfaces in $4$-manifolds. 
This approach can be thought of as a Hempel distance for pairs of pants. In \cite{johnson2006heegaard}, Johnson used it to study Heegaard splittings of irreducible 3-manifolds. Now, we define a similar invariant in four dimensions. 

Let $\Tcal$ be a bridge multisection for a pair $(X,F)$. The disk sets of the tangles $(H_i,T_i)$, denoted by $\Dcal_i$, are subsets of both $\mc{C}^*(\Sigma)$ and $\Pcal(\Sigma)$. A fact we will show in \autoref{sec:efficient} is that the distance between the disk sets of consecutive tangles is the same if computed in $\mc{C}^*(\Sigma)$ or $\Pcal(\Sigma)$; that is, $d^*\left(\Dcal_i, \Dcal_{i+1}\right)=d^{\Pcal}\left(\Dcal_i, \Dcal_{i+1}\right)$, $\forall i=1,\dots, n$. 
With this in mind, a pair of vertices $(u,v)\in \Dcal_i\times \Dcal_{i+1}$ is called an \textit{efficient defining pair} for $(\Dcal_i, \Dcal_{i+1})$ if it achieves the (minimum) distance between $\Dcal_i$ and $\Dcal_{i+1}$. See \autoref{fig:ugly} for an example and non-example of efficient defining pairs. 
We are ready to introduce two notions of distance for $\Tcal$. 
\begin{figure}[h]
\centering
\includegraphics[width=1\textwidth]{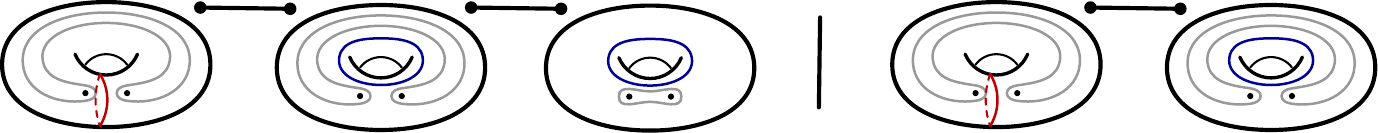}
\caption{On the left, the initial and terminal vertices \textit{do not} form an efficient defining pair. On the right, the initial and terminal vertices \textit{do} form an efficient defining pair.}
\label{fig:ugly}
\end{figure}

\begin{definition}\label{def: L-inv}
The \textit{$\Lcal_n^*$-invariant} of an $n$-section $\Tcal$ is equal to the smallest sum 
$\sum_i d^*(P'_i,P_{i+1})$  
amongst all efficient pairs $(P_i,P'_i)$ for $(\Dcal_i,\Dcal_{i+1})$. 
Analogously, the \textit{$\Lcal_n^\Pcal$-invariant} of an \hbox{$n$-section} $\Tcal$ is defined to be the minimum of 
$\sum_i d^\Pcal \left( P'_i, P_{i+1}\right)$ 
amongst all efficient defining pairs $(P_i,P'_i)$ for $(\Dcal_i, \Dcal_{i+1})$. 
\end{definition}

It is useful to visualize each disk set $\Dcal_i$ as a cloud in the respective complex of curves. This invariant then measures the minimal distance within the clouds, over all diagrams with clouds separated by efficient pairs.
See \autoref{fig:islands} for a graphic representation. In the proofs, we will also often refer to $\Dcal_i$ as an \textit{island}.

\begin{figure}[h]
\labellist 
\pinlabel {$\Dcal_1$}  at 105 240
\pinlabel {$\Dcal_3$}  at 500 240
\pinlabel {$\Dcal_4$}  at 300 40  
\pinlabel {$\Dcal_2$}  at 300 440

\pinlabel {$P_1$}  at 180 265
\pinlabel {$P'_4$}  at 180 210

\pinlabel {$P'_2$}  at 405 260
\pinlabel {$P_3$}  at 405 210

\pinlabel {$P'_3$}  at 355 80 
\pinlabel {$P_4$}  at 235 80 

\pinlabel {$P'_1$}  at 240 390
\pinlabel {$P_2$}  at 330 390

\endlabellist
\centering
\includegraphics[width=7cm]{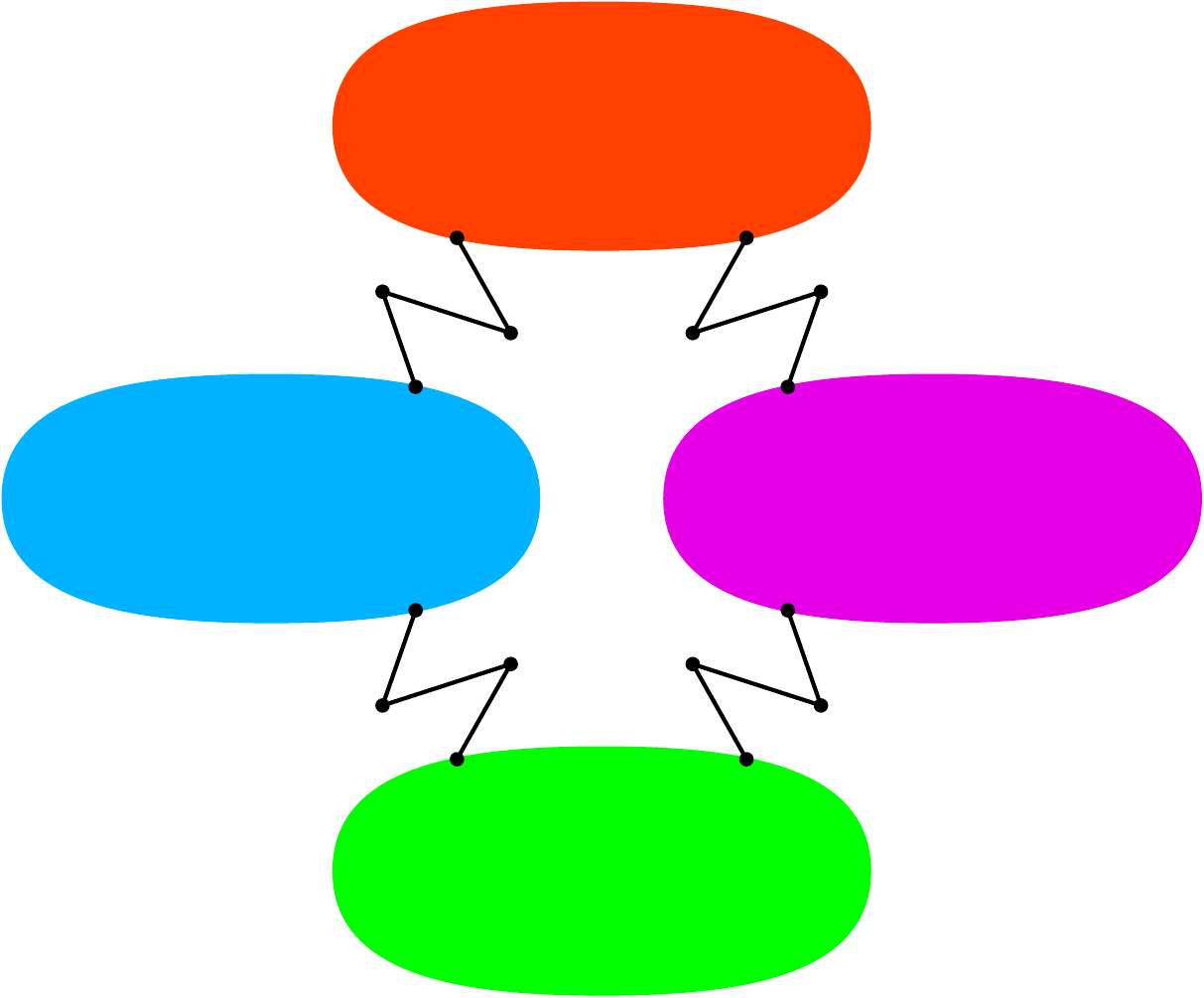}
\caption{$\Lcal_n^\Pcal(\Tcal)$ and $\Lcal_n^*(\Tcal)$ count the number of edges between vertices in the disk sets.}
\label{fig:islands}
\end{figure}

\begin{definition}
Let $F$ be a surface embedded in a closed 4-manifold $X$. The \textit{$\Lcal_n^*$-invariant} of the pair $(X,F)$ is the smallest $\Lcal_n^*(\Tcal)$ amongst all $n$-sections for $(X,F)$ having minimal complexity $(g,b)$. 
The \textit{$\Lcal^\Pcal_n$-invariant} of $(X,F)$ is defined the same way. 
\end{definition}
We now demonstrate these definitions in detail with an example.
\begin{example}[Trisection of $\mathcal{C}_1$ and $\mathcal{C}_2$ in $\mathbb{CP}^2$]\label{exam:deg12_cp2}
Let $\mathcal{C}_d$ denote the degree $d$ complex curve in the complex projective plane. For $d=1,2$, $\mc{C}_d$ admits c-irreducible bridge trisections with complexities $(g,k;b,c)=(1,0;1,1)$. The red, blue, and green curves in \autoref{fig:c1paths} and \autoref{fig:c2paths} on the torus with two marked points combine to make doubly pointed bridge trisection diagrams for $\mc{C}_1$ and $\mc{C}_2,$ respectively. We now provide paths in the dual curve complex and the pants complex, giving the upper estimate for $\mathcal{L}^*_3$ and $\mathcal{L}^\Pcal_3$, respectively. Readers are encouraged to consult \autoref{fig:c1paths} and \autoref{fig:c2paths}. There are two main steps.

\textbf{Step 1}: Each $(X_i,D_i)\cap (X_{i+1},D_{i+1})$ for $i=1,2,3 \pmod 3$ is a $(1,1)$-bridge position of an unknot in $S^3$. For each trivial tangle in $H_i$ choose two pants decompositions $P_i$ and $P_{i+2}'$, where $P_i$ is one S-move away from $P_i'$. This is possible due to \autoref{lem:efficient}.

\textbf{Step 2}: Find a geodesic from $P_1'$ to $P_{2}$, from $P_2'$ to $P_{3}$ and from $P_3'$ to $P_1$. Note that each gray curve that moves once in each disk set is a reducing curve (that is, bounds c-disks in two trivial tangles). 
Therefore, $\mathcal{L}_3^*(\mathbb{CP}^2,\mathcal{C}_1)\leq 3$ and $\mathcal{L}_3^*(\mathbb{CP}^2,\mathcal{C}_2)\leq 3$. 
The lower bound in \autoref{thm:lower_bound_1} also gives that $\Lcal_3^*\geq 3$. Hence, $\mathcal{L}_3^*(\mathbb{CP}^2,\mathcal{C}_i)= 3$ for $i=1,2$.

Observe that when a curve bounding a compressing disk moves inside each colored disk set, the new curve bounding a compressing disk intersects the old curve four times, so these paths do not estimate $\Lcal_3^{\mc P}$. 

To compute the pants version of our complexity, we have to find an intermediate curve between $P_i'$ and $P_{i+1}$. That is, the separating curve always moves twice in each disk set. The intermediate curve can be taken to be parallel to the non-separating curve so that the annulus co-bounded by the parallel copies contains a puncture (see the doubly-pointed diagrams on the right sides of \autoref{fig:c1paths} and \autoref{fig:c2paths}). The lower bound for the latter statement comes from an observation that an A-move in the pants complex occurring in a twice-punctured annulus will connect two curves that intersect at least four times. Therefore, $\mathcal{L}_3^{\mathcal{P}}(\mathbb{CP}^2,\mathcal{C}_i)=6$ for $i=1,2$.

\label{example:c1andc2incp2}
\end{example}

\begin{figure}[ht!]
\centering
\includegraphics[width=8cm]{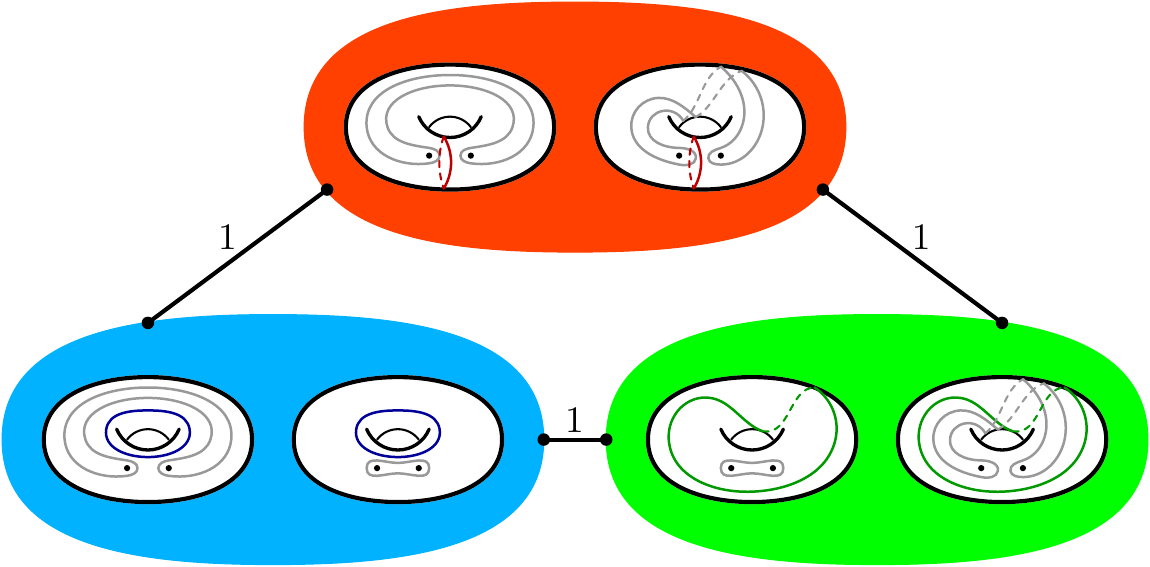}
\includegraphics[width=8cm]{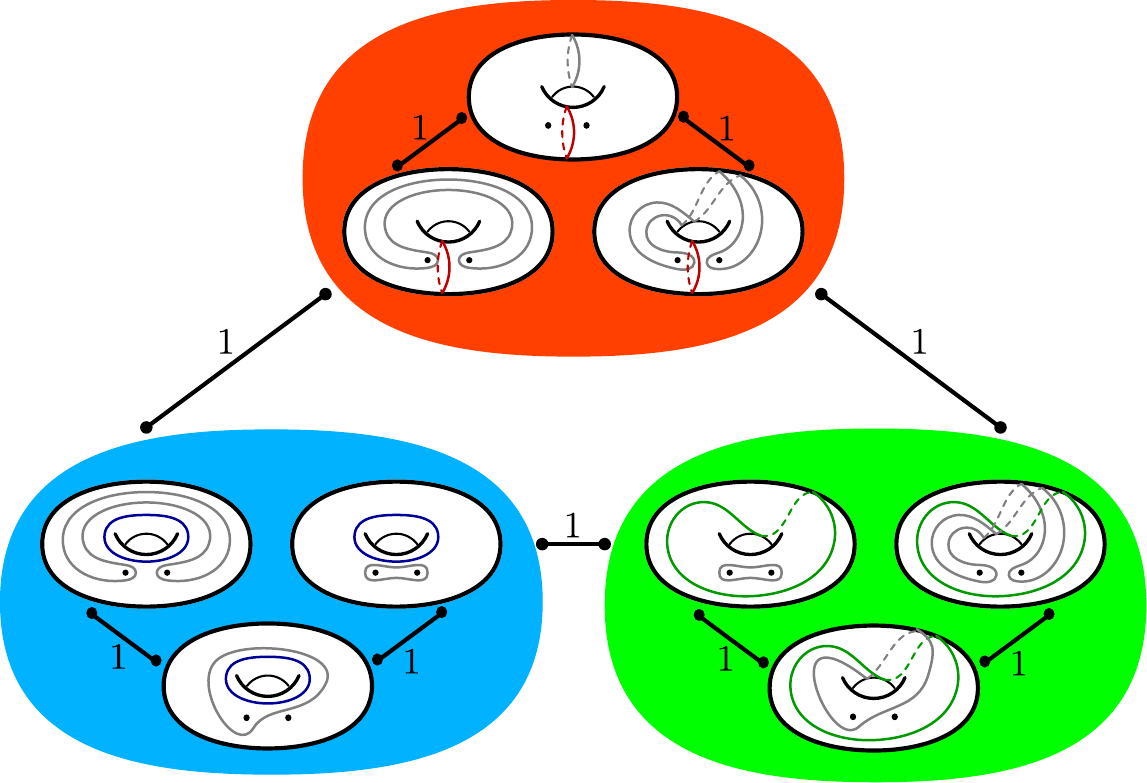} 
\caption{Loops in $\mc{C}^*(\Sigma_{1,2})$ estimating $\Lcal_3^*(\mathbb{CP}^2,\mc{C}_1)$ (left) and $\Lcal_3^{\Pcal}(\mathbb{CP}^2,\mc{C}_1)$ (right).}
\label{fig:c1paths}
\end{figure}

\begin{figure}[ht!]
\centering
\includegraphics[width=8cm]{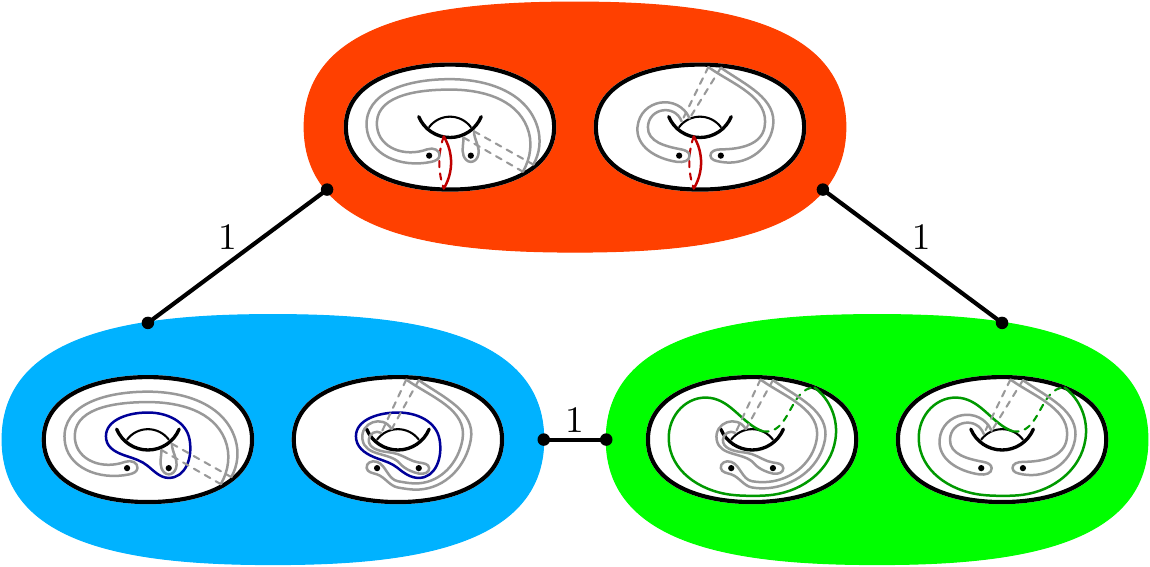} 
\includegraphics[width=8cm]{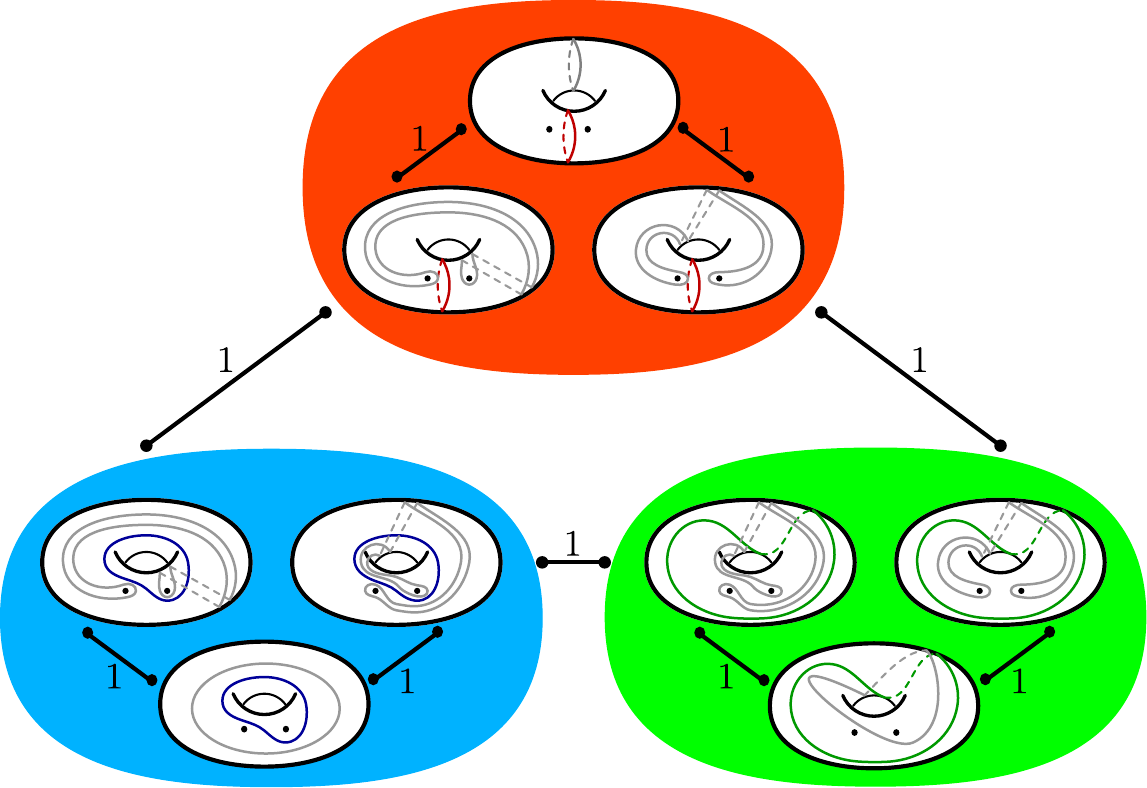} 
\caption{Loops in $\mc{C}^*(\Sigma_{1,2})$ estimating $\Lcal_3^*(\mathbb{CP}^2,\mc{C}_2)$ (left) and $\Lcal_3^{\Pcal}(\mathbb{CP}^2,\mc{C}_2)$ (right).}
\label{fig:c2paths}
\end{figure}

For a bridge trisection of a knotted surface $F$ in $S^4$, the invariant $\mathcal{L}_3^{\mathcal{P}}(S^4,F)$ coincides with $\mathcal{L}(F)$ introduced in \cite{blair2020kirby} and $\mathcal{L}^*_3(S^4,F)$ coincides with $\mathcal{L}^*(F)$ introduced in \cite{aranda2022bounds}. If $F$ is empty, $\Lcal_n^*(X)=\Lcal^*_n(X,\emptyset)$ is an invariant for closed 4-manifolds. 
A summary of the relevant notation for the definition of our invariants can be found in \autoref{tab:notation}. 

\begin{table}[h]
\centering
\begin{tabular}{|rcl|rcl|}
\hline
\multicolumn{3}{|c|}{\textbf{Dual curve complex} $\mc{C}^*(\Sigma)$} & \multicolumn{3}{|c|}{\textbf{Pants complex} $\Pcal(\Sigma)$}  \T\B  \\
\hline
$d^*(-,-)$ & $=$ & combinatorial distance & $d^{\mathcal{P}}(-,-)$ & $=$ & combinatorial distance \T 
\B \\
$\Dcal_i$ & $=$ & disk set of $(H_i,T_i$) & $\Dcal_i$ & $=$ & disk set of $(H_i,T_i)$ \T \B \\
$(P_i,P'_{i})$ & $=$ & efficient pair for $(\Dcal_i,\Dcal_{i+1})$ & $(P_i,P'_{i})$ & $=$ & efficient pair for $(\Dcal_i,\Dcal_{i+1})$ \T 
\B \\
$\mathcal{L}_n^*(\mathcal{T})$ & $=$ & $\min\left \lbrace \sum_{i} d^*(P'_i,P_{i+1}) \right\rbrace$ & $\mathcal{L}_n^{\mathcal{P}}(\mathcal{T})$ & $=$ & $\min\left\lbrace\sum_{i} d^{\mathcal{P}}(P'_i,P_{i+1})) \right\rbrace$ \T \\
& & over all efficient pairs & & & over all efficient pairs \B \\
$\mathcal{L}_n^*(X,F)$ & $=$ & $\min\left\lbrace \mathcal{L}_n^*(\mathcal{T})\right\rbrace$ over all& $\mathcal{L}_n^{\mathcal{P}}(X,F)$ & $=$ & $\min\left\lbrace \mathcal{L}_n^{\mathcal{P}}(\mathcal{T}) \right\rbrace$ over all\T \\
& & minimal $(g,b)$ $n$-sections& & &minimal $(g,b)$ $n$-sections 
\B \\
\hline
\end{tabular}
\caption{A summary of the definition of the invariants $\Lcal_n^*$ and $\Lcal_n^\Pcal$.
\label{tab:notation}
}
\end{table}

\begin{remark} \label{remark:compare}
We remark that $\Lcal_n^*\leq \Lcal_n^{\mc P}$. This is due to the fact that $\Pcal(\Sigma)$ is a subgraph of $\mc{C}^*(\Sigma)$ with the same vertex set. 
Therefore, a path in $\Pcal(\Sigma)$ gives rise to a path in $\mc{C}^*(\Sigma)$, but not necessarily a geodesic. Because of this, many of our results are stated (solely) in terms of the $\Lcal_n^*$ version.
\end{remark}

\subsection{Comparing \texorpdfstring{$\Lcal$}{L}-invariants}

Here we note some comparisons between various versions of the \texorpdfstring{$\Lcal$}{L}-invariant.

\subsubsection{\texorpdfstring{$\Lcal$}{L} and \texorpdfstring{$\Lcal^*_n$}{L*} are different invariants}

The $\Lcal_n^*$-invariant (resp. $\Lcal_n^{\Pcal}$) of a multisection with $n$ sectors is a function of the shortest length of a loop $\lambda$ in $\mc{C}^*(\Sigma)$ (resp. $\Pcal(\Sigma)$) passing through efficient defining pairs for $(\Dcal_i, \Dcal_{i+1})$ in a cyclic order $i=1,\dots, n$. 
This interpretation mimics the definition of the $\Lcal$-invariant introduced by Kirby and Thompson in \cite{kirby2018new} for trisections of closed 4-manifolds. We emphasize that our invariants are variations of the original $\Lcal$-invariant in the sense that they live in distinct complexes of curves -- the dual curve complex ($\Lcal^*_n$) and the pants complex ($\Lcal^\Pcal_n$) -- versus the cut complex ($\Lcal$). 
Regardless of these complexes, they all attempt to capture the complexity of a multisected pair $(X,F)$. 

The $\Lcal$- and $\Lcal^*_n$-invariants both count the number of edges between vertices in the disk sets, but the edges counted in the Kirby-Thompson invariant always lie in the disk sets, while the edges counted in our definition may go out of the disk sets, like the original Hempel distance of Heegaard splittings \cite{Hempel01}. Nevertheless, it is natural to ask whether the two versions are related (when $n=3$). The following two results show that $\Lcal$ and $\Lcal^*_3$ behave differently. 

\begin{corollary} \label{cor:ComparingKT1}
There is an infinite family of $4$-manifolds $\{X_{p}\}_{p\in \N}$ such that 
\[
\sup_p\Lcal_3^*(X_{p})<\infty, \quad \sup_p\Lcal(X_{p})=\infty.\]
\end{corollary}

\begin{proof}
Let $X_{p}$ be the spin of the lens space $L(p,1).$ By a result of Asano, Naoe, and Ogawa \cite{asano2023some}, $\mathcal{L}(X_{p})\underset{p}{\rightarrow} \infty$. By \autoref{example:trisection_spun_lens}, $\mathcal{L}_3^{*}(X_{p}) \leq 18$ for all $p$. 
\end{proof}

\begin{corollary} \label{cor:ComparingKT2}
There is an infinite family of $4$-manifolds $\{Y_{m}\}_{m\in \N}$ such that 
\[
\sup_m\Lcal_3^*(Y_{m})=\infty, \quad \sup_m\Lcal(Y_{m})=0.\]
\end{corollary}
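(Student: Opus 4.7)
The plan is to exhibit the family $Y_m=\#^m(S^2\times S^2)$ and verify the two conditions separately, imitating the flow of the proof of \autoref{cor:ComparingKT1}.

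\textbf{Showing $\Lcal(Y_m)=0$ for all $m$.} Kirby and Thompson showed in their original paper that the standard genus-two trisection of $S^2\times S^2$ realizes $\Lcal(S^2\times S^2)=0$. Because $\Lcal$ is subadditive under the connected sum of trisections and is non-negative, the genus-$2m$ trisection of $Y_m$ obtained by iteratively connect-summing $m$ copies of the standard trisection has $\Lcal=0$. Thus $\sup_m\Lcal(Y_m)=0$.

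\textbf{Showing $\sup_m\Lcal_3^*(Y_m)=\infty$.} We argue by contradiction using the prime lower bound (the third theorem of the introduction). Since $S^2\times S^2$ is prime with $g=2$ and $\chi=4$, that bound gives $\Lcal_3^*(S^2\times S^2)\geq 7(2-1)-4=3$. Suppose instead that $\Lcal_3^*(Y_m)\leq C$ for every $m$. The minimal trisection genus of $Y_m$ is $2m$ and $\chi(Y_m)=2+2m$, so a minimal trisection $\Tcal$ of $Y_m$ satisfies
\[
\Lcal_3^*(\Tcal)\leq C<7(2m-1)-(2+2m)=12m-9
\]
for all $m>(C+9)/12$. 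By \autoref{thm:lower_bound_1_V2} (the lower-bound theorem used in the proof of the third theorem of the introduction), $\Tcal$ must be c-reducible. Iterating c-reductions yields a presentation of $\Tcal$ as a c-connected sum of bridge trisections of prime pairs, and the resulting prime factors of $Y_m$ must contain at least $m$ copies of $S^2\times S^2$ by smooth uniqueness of the prime decomposition in this family. Using additivity of $\Lcal_3^*$ across c-connected sums (which is the natural strengthening of \autoref{lem:completely_decomposable}'s reasoning), one concludes
\[
\Lcal_3^*(\Tcal)\;\geq\;\sum_{i}\Lcal_3^*(X_i)\;\geq\;m\cdot\Lcal_3^*(S^2\times S^2)\;\geq\;3m,
\]
which contradicts $\Lcal_3^*(\Tcal)\leq C$ for $m>C/3$.

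\textbf{Main obstacle.} The delicate technical point is the additivity statement $\Lcal_3^*(\Tcal_1\#\Tcal_2)=\Lcal_3^*(\Tcal_1)+\Lcal_3^*(\Tcal_2)$ for c-connected sums along a separating compressing curve. This requires showing that efficient defining pairs in the original central surface split into efficient pairs on each summand, and that geodesics in the dual curve complex respect the decomposition along the c-reducing sphere — any failure would allow a ``shortcut'' path through pants decompositions that mix curves from both summands. A secondary, more cosmetic issue is replacing reliance on smooth prime decomposition of $Y_m$ with a purely trisection-level accounting, tracking the trisection genus through the iterated c-reduction and noting that pieces of trisection genus zero or one (namely $S^4$ and $S^1\times S^3$) cannot account for the full genus $2m$ of the original trisection of $Y_m$, forcing at least $m$ copies of a genus-two prime factor.
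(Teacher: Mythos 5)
Your choice of family $Y_m=\#^m(S^2\times S^2)$ and your citation of Kirby--Thompson for $\Lcal(Y_m)=0$ both match the paper. However, your lower-bound argument takes a route that the paper explicitly avoids, and the gap you flag as the ``main obstacle'' is a genuine one, not merely a detail to be checked.

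The paper's lower bound for $\Lcal_3^*(Y_m)$ (in the example of \autoref{subsec:S2xS2}) does \emph{not} go through any c-reduction/additivity argument. Instead it works directly on an arbitrary minimal-genus trisection of $Y_m$: since $g=2m$ and $k=0$, \autoref{lem:efficient} forces $2m$ S-moves in every efficient pair, each of which is supported in a one-holed torus with a separating waist curve. If any such waist curve stays stationary in a disk set it becomes a reducing curve that cuts off a genus-one trisection, hence a $\pm\mathbb{CP}^2$ summand of $Y_m$ — ruled out by the intersection form. So all $2m$ waist curves must move in each of the three disk sets, giving $\Lcal_3^*(Y_m)\geq 6m$. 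This argument sidesteps both the primality and additivity issues entirely by working with intersection-form obstructions on a single trisection.

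By contrast, your argument hinges on the additivity
\[
\Lcal_3^*(\Tcal_1\#\Tcal_2)=\Lcal_3^*(\Tcal_1)+\Lcal_3^*(\Tcal_2)
\]
for c-connected sums, which is nowhere established in the paper and is indeed far from automatic: a geodesic in $\mc{C}^*(\Sigma)$ between efficient pairs of the glued surface has no a priori reason to respect the reducing curve, so it could conceivably pass through pants decompositions that mix curves from both summands and be strictly shorter than the sum of the component distances. Notably, the paper itself remarks that ``as some minimal genus trisections of $Y_m$ are reducible, the lower bounds from \autoref{sec:lower_bounds} cannot be used to estimate $\Lcal_3^*(Y_m)$,'' and calls giving such bounds for reducible manifolds ``a (hard) interesting problem.'' So your proposed route runs head-on into exactly what the authors acknowledge is open. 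A secondary soft spot is the appeal to ``smooth uniqueness of the prime decomposition in this family'': unique prime decomposition is known to fail for smooth $4$-manifolds in general, so you would need a separate argument (e.g.\ via intersection forms, as the paper does) to conclude that c-reduction must produce genus-two pieces representing $S^2\times S^2$. Correctly identifying the obstruction is good, but the proof as written does not close it; the paper's direct S-move count is the way around.
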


\begin{proof}
Let $Y_{m}$ be the connected sum of $m$ copies of $S^2\times S^2$. By the work in \cite{kirby2018new}, $\Lcal(Y_m)=0$ for all $m$. The work in \autoref{subsec:S2xS2} implies that $\mathcal{L}_3^{*}(Y_m) \underset{m}{\rightarrow} \infty$.
\end{proof}

\subsubsection{\texorpdfstring{$\Lcal_n^*$}{L*}-invariants are unbounded}

In \autoref{sec:irred_multisections} we give lower bounds for the $\Lcal_n^*$-invariants of c-irreducible multisections in terms of the complexities $(g,k;b,c)$ (see \autoref{thm:lower_boundV1} and \autoref{thm:lower_bound_1}). As our lower bounds are increasing with respect to $(g,b)$, we can conclude that the invariants can be arbitrarily large. For instance, take a prime closed 4-manifold with high $n$-section genus. 

\begin{question}\label{question:L*_unbounded}
Let $n\geq 3$, and $g,b\geq 0$ be fixed. Is there a family of $n$-sections $\{\Tcal_{\ell}\}_{\ell\in I}$ with the same $(g,b)$-complexity satisfying $\sup_{\ell} \Lcal_n^*(\Tcal_{\ell})=\infty$?
\end{question}

We also present a version of \autoref{question:L*_unbounded} for $\Lcal^\Pcal_n$-invariants. The examples and conjectures in \autoref{sec:MiscExamples} may hint at a solution to the following problem. Recall from \autoref{remark:compare} that $\Lcal^*_n\leq \Lcal^\Pcal_n$. 

\begin{problem}\label{problem:large_difference}
    Fix $n\geq 3$. Find an infinite family $Z^n_q$ of closed, oriented $4$-manifolds such that the difference $\Lcal^\Pcal_n(Z^n_q) - \Lcal_n^*(Z^n_q)$ is unbounded as $q\to \infty$.
\end{problem}

\section{Various examples}\label{sec:MiscExamples}

In this section, we provide computations of the distance invariants for examples of surfaces in 4-manifolds. We think of this section as a series of applications of the general estimates proved later in this work (see \autoref{sec:lower_bounds} and \autoref{sec:genus_two_quadrisec}). The upper bounds are constructive. We begin computing $\Lcal_n^*$ and $\Lcal_n^\Pcal$ for multisections with small $(g,b)$-complexity.

\begin{proposition}\label{prop:(0,2)(1,0)_cases}
Let $\Tcal$ be a multisection with $(g,b)=(0,2),(1,0)$. Then $$\Lcal_n^*(\Tcal)=\Lcal_n^{\mc P}(\Tcal)=0.$$ \end{proposition}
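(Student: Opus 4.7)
The plan is to handle the two cases separately. The $(g, b) = (1, 0)$ case is immediate: the central surface is a closed torus with $3g + 2b - 3 = 0$, so every pants decomposition is empty, the complexes $\mc{C}^*(\Sigma)$ and $\Pcal(\Sigma)$ each collapse to a single vertex which trivially lies in every $\Dcal_i$, and all distances vanish.

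For $(g, b) = (0, 2)$ the central surface is $\Sigma_{0,4}$ and each pants decomposition is a single essential simple closed curve. My key step is to show that for every trivial $2$-strand tangle $(H_i, T_i) = (B^3, T_i)$ the disk set $\Dcal_i$ is a singleton consisting of the isotopy class of the canonical compressing curve that separates the two arcs of $T_i$. Granted this, for any efficient pair $(P_i, P'_i) \in \Dcal_i \times \Dcal_{i+1}$, both $P'_i$ and $P_{i+1}$ must equal the unique vertex of $\Dcal_{i+1}$, so $d^*(P'_i, P_{i+1}) = d^{\Pcal}(P'_i, P_{i+1}) = 0$, and summing yields $\Lcal_n^*(\Tcal) = \Lcal_n^{\Pcal}(\Tcal) = 0$.

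I would prove the key claim in two steps. First, a parity argument rules out cut disks: for any essential simple closed curve $c$ on $\Sigma_{0,4}$, each arc of $T_i$ has its two endpoints either on the same side of $c$ (so it meets any bounding disk $D$ an even number of times) or on opposite sides (so it meets $D$ an odd number of times), and summing over the two arcs always produces an even total $|D \cap T_i|$; hence no disk achieves $|D \cap T_i| = 1$, and only compressing c-disks can occur. Second, the compressing disk of a trivial $2$-strand tangle is unique up to ambient isotopy in $B^3 \setminus T_i$, and this isotopy restricts to an isotopy of the boundary curve on $\Sigma_{0,4}$; any essential curve of a different slope has minimum intersection number with $T_i$ at least $2$ and so cannot bound a compressing disk either. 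The main subtlety I anticipate is this second step, where one must ensure that uniqueness of the compressing disk in the genus-$2$ handlebody complement truly descends to uniqueness of the boundary curve on the $4$-punctured sphere, without allowing different slopes on $\Sigma_{0,4}$ that become isotopic only after handle-sliding through the arcs.
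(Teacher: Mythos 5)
Your approach matches the paper's: the proposition follows once you know the disk sets $\Dcal_i$ are singletons, since then each efficient pair consists of the same vertex and every summand $d^*(P_i',P_{i+1})$ vanishes. The paper simply asserts the singleton claim as ``well known''; you supply the details. A few remarks on the details you added.

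In the $(1,0)$ case your phrasing is a bit looser than the argument really wants to be. Cutting a closed torus along zero curves leaves a torus, not a union of pairs of pants, so ``the pants decomposition is empty'' is not literally a decomposition into pants; the complex for a closed torus is a special case by convention (single-curve vertices, Farey-graph style). The paper's own phrasing --- that a genus one handlebody has a unique c-disk up to isotopy, so $\Dcal_i$ is a singleton --- is the cleaner way to say it, since it does not rely on reading $3g+2b-3=0$ as ``empty decomposition''. Either way the conclusion is the same, but it is worth saying which convention you intend.

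In the $(0,2)$ parity step, the statement ``summing over the two arcs always produces an even total'' is true but needs one more sentence, because a priori one arc could cross $c$ an even number of times and the other an odd number, giving an odd total. The point is that an essential curve $c$ on $\Sigma_{0,4}$ necessarily splits the four punctures $2$-and-$2$; if one arc of $T_i$ had both endpoints on one side and the other had one endpoint on each side, the split would be $3$-and-$1$, a contradiction. So the two arcs always have equal crossing parity, hence the total is even and no cut disk exists. Your concern about uniqueness of the compressing curve on $\Sigma_{0,4}$ is unfounded: the uniqueness you need is uniqueness of compressing disks for the \emph{tangle} $(B^3,T_i)$ (disks in $B^3$ disjoint from $T_i$), not uniqueness of compressing disks for the genus-two handlebody $B^3\setminus N(T_i)$, and the former is standard for trivial tangles; there is no handle-sliding through arcs to worry about because such a slide would change the boundary slope on $\Sigma_{0,4}$.
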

\begin{proof}
It is well known that 2-string trivial tangles and genus one handlebodies have unique c-disks up to isotopy. Thus, the disk sets $\Dcal(H_i,T_i)$ are singletons. Hence, by definition of the $\Lcal_n^\Pcal$-invariant, $0\leq \Lcal_n^*(\Tcal)\leq \Lcal_n^{\mc P}(\Tcal)\leq 0$. 
\end{proof}

\subsection{Trisections}

\begin{example}[Trisection of $\mathcal{C}_{1,1}$ in $S^2\widetilde{\times}S^2$]

Consider the sphere $\mathcal{C}_{1,1}$ representing the homology class $(1,1)$ in $H_2(S^2\widetilde{\times}S^2,\Z)$. Such a surface admits a bridge trisection diagram depicted in \autoref{fig:c11path} with complexity $(g,k;b,c)=(2,0;1,1)$. The number of curves in a pants decomposition is $3g+2b-3=5.$ The curves that are not colored grey are the ones that move outside the disk sets and are not contributing to the count of $\mathcal{L}_3^*$. Among these grey curves, there is one curve $\lambda$ that stays fixed along the whole loop. In each disk set, two grey curves that are not $\lambda$ each moves once. Since there are three disk sets, we get the upper bound $\mathcal{L}_3^*(S^2\widetilde{\times}S^2,\mathcal{C}_{1,1})\leq 6.$ 

It is also true that $\mathcal{L}_3^*(\mathcal{C}_{1,1})=6$. To estimate the lower bound, let us focus on the depicted edge connecting red and blue handlebodies. By \autoref{lem:efficient}, there are two separating curves each cutting off a genus one summand bounding in red and blue handlebodies simultaneously. If these two curves stay stationary in the red disk set, then we get two grey curves $\lambda_1$ and $\lambda_2$ that bound in three disk sets simultaneously. This implies that the surface $\mathcal{C}_{1,1}$ is completely contained in a $\mathbb{CP}^2$ summand or a $\overline{\mathbb{CP}^2}$ summand. This cannot happen since $\mathcal{C}_{1,1}$ generates homology in both summands. Thus, $\lambda_1$ and $\lambda_2$ each moves at least once. Making the same argument for the red-green pair and the blue-green pair gives $\mathcal{L}_3^*(\mathcal{C}_{1,1})=6$.
\end{example}

\begin{figure}[h]
\includegraphics[width=11cm]{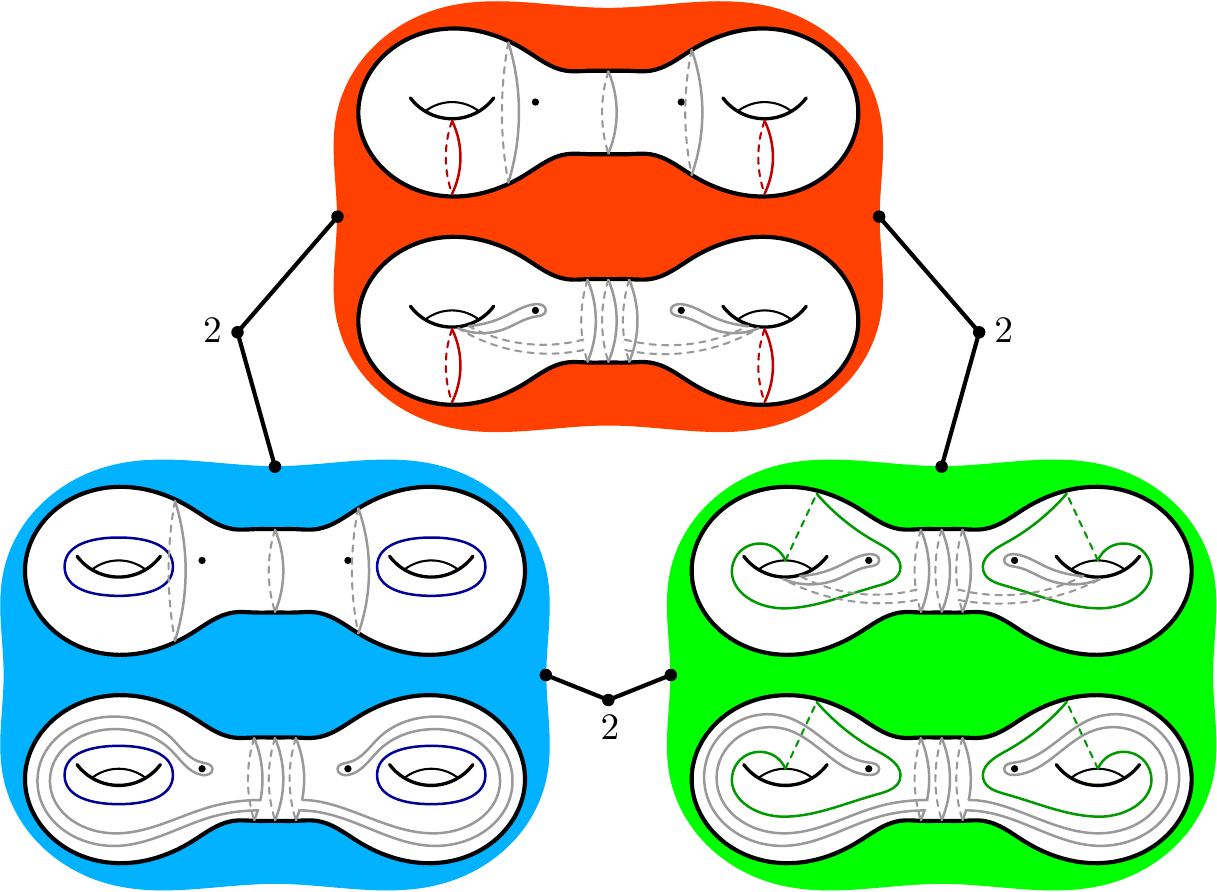}
\centering
\caption{A loop in the dual curve graph giving $\mathcal{L}_3^*(S^2\widetilde{\times} S^2,\mathcal{C}_{1,1}) \leq 6$. \label{fig:c11path}}
\end{figure}

\begin{example}\label{example:trisection_spun_lens}
Let $X_p$ be the 4-manifold obtained by spinning the lens space $L(p,q)$. In \cite{Meier}, Meier showed that $X_p$ admits genus three trisections depicted in \autoref{fig:trisection_spun_lens}(a). Using the path in \autoref{fig:trisection_spun_lens2} one can check that $\Lcal^*_3(X_p)\leq 6+6+6=18$. On the other hand, given that genus two trisections are standard \cite{MZgenustwo}, the genus three trisections for $X_p$ are c-irreducible. Thus, by \autoref{thm:lower_bound_1}, $\Lcal_3^*(X_p)\geq 12$. 
Motivated by the work in \cite{aranda2022bounds}, we conjecture that this lower bound can be improved. 

\begin{conjecture}
The $\Lcal^*_3$-invariant of spun lens spaces is equal to $18$. 
\end{conjecture}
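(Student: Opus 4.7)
The plan is to strengthen the lower bound from $12$ to the conjectured $18$, matching the explicit upper bound established via the loop in \autoref{fig:trisection_spun_lens2}. Following the strategy of \cite{aranda2022bounds}, the general lower bound of \autoref{thm:lower_bound_1} uses only c-irreducibility; to reach $18$ one must exploit specific features of the spun lens space trisection. Since a pants decomposition of $\Sigma_3$ has $3g-3=6$ curves and the claimed total is $18 = 3 \cdot 6$, the equality represents the extremal case in which, inside each of the three islands $\Dcal_i$, every pants curve must be swapped on the way from the landing pair to the launching pair.

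First, I would fix the genus-three trisection $\Tcal$ from \autoref{fig:trisection_spun_lens}(a) and give explicit descriptions of $\Dcal_1,\Dcal_2,\Dcal_3 \subset \mc{C}^*(\Sigma_3)$ as orbits of the starting cut systems under the relevant handlebody groups. The lens-space parameter $p$ should enter through Dehn twisting along a distinguished curve, producing arithmetic obstructions that rule out coincidences between cut systems in distinct $\Dcal_i$. With the disk sets in hand, \autoref{lem:efficient} identifies the efficient defining pairs $(P_i, P'_i)$ and pins down their structure up to the action of the handlebody-group stabilizers.

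Second, within each island $\Dcal_{i+1}$, I would show $d^*(P'_i, P_{i+1}) = 6$. The structural obstruction is that any curve $\gamma$ common to $P'_i$ and $P_{i+1}$ would, by the efficiency conditions on both sides, bound compressing disks in $H_i$ and $H_{i+1}$ simultaneously, making $\gamma$ a c-reducing curve for $\Tcal$; combined with c-irreducibility of the genus-three trisection (a consequence of \cite{MZgenustwo}), this rules out shared curves and forces all six pants curves to change. Summing the three within-island distances would then yield $\Lcal_3^*(X_p) \geq 18$.

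The main obstacle is the last step: promoting the ``no shared c-reducing curve'' principle into ``no shared pants curve at all'' between the two efficient decompositions inside a single island. In the surface-in-$4$-manifold examples $(\mathbb{CP}^2,\mc{C}_i)$ and $(S^2\widetilde{\times}S^2, \mc{C}_{1,1})$, the homology classes represented by the embedded surface did this work; here, with no surface present, the substitute should be a refined invariant attached to each curve's class in $\pi_1(\partial X_i)$, using the $\Z/p$ factor arising from the spin construction to detect accidental sharing between compressing systems of distinct handlebodies. A sharpening of \autoref{thm:lower_bound_1} tailored to trisections arising from spun 3-manifolds, in the spirit of Ogawa's refinement \cite{ogawa2021trisections} for the original Kirby-Thompson invariant, seems the cleanest route and would reduce the conjecture to a purely combinatorial statement about cut systems of lens-space handlebodies.
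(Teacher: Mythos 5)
This statement is a \emph{conjecture} in the paper, not a theorem: the authors establish $\Lcal_3^*(X_p) \leq 18$ by the explicit loop in \autoref{fig:trisection_spun_lens2} and $\Lcal_3^*(X_p) \geq 12$ by \autoref{thm:lower_bound_1} applied to the c-irreducible genus-three trisection, and they leave the gap open. So there is no argument in the paper to compare against, and your proposal must be assessed as an original attempt.

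Your overall framing is correct---the task is to raise the lower bound from $12$ to $18$, and you rightly observe that this amounts to forcing $d^*(P'_i, P_{i+1}) = 6$ inside each island. But the structural obstruction you propose to achieve this does not hold. You claim that a curve $\gamma \in P'_i \cap P_{i+1}$ ``would, by the efficiency conditions on both sides, bound compressing disks in $H_i$ and $H_{i+1}$ simultaneously, making $\gamma$ a c-reducing curve.'' This is false: both $P'_i$ and $P_{i+1}$ live in the \emph{same} island $\Dcal_{i+1}$, so membership in either only tells you that $\gamma$ bounds a c-disk in $H_{i+1}$. To conclude $\gamma$ also bounds in $H_i$ you would need $\gamma \in P_i \cap P'_i$, i.e.\ $\gamma$ is \emph{fixed} along the efficient geodesic---and that is precisely the hypothesis of \autoref{prop:no_reducing_curves_V2}, which applies to only $|P_i \cap P'_i| = (3g - 3) - (g - k_i) = 4$ of the six pants curves when $(g,k_i) = (3,1)$. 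That is how the paper obtains $3 \cdot 4 = 12$ and cannot obtain $18$ by these means. If your obstruction were correct the within-island distance would always equal $3g + 2b - 3$, which \autoref{exam:deg12_cp2} and the $\mc{C}_{1,1}$ example in \autoref{Sec:MiscExamples} show is not the case.

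You do implicitly acknowledge this weakness in your final paragraph (``promoting the `no shared c-reducing curve' principle into `no shared pants curve at all' ''), but you mischaracterize what is available: the paper gives you control over the four curves in $P_i \cap P'_i$, and the open problem is controlling the remaining two curves of $P'_i$, the ones that genuinely participate in the S-moves. Those curves bound only in $H_{i+1}$ (by \autoref{lem:efficient}(3), they are compressing and move exactly once), so $\pi_1$ or homological invariants of $\partial X_i$ alone will not immediately obstruct their coinciding with curves of $P_{i+1}$. A sharper argument would likely need to analyze the interaction between the S-moves of consecutive efficient pairs and the Dehn-twist data encoding $p$, much as the Ogawa-style refinement you cite reworks the original Kirby--Thompson argument at the level of dual curves rather than merely appealing to irreducibility. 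As written, your proposal identifies the right target and the right ingredients but does not supply the decisive lemma.
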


\begin{figure}[h]
\centering
\includegraphics[width=12cm]{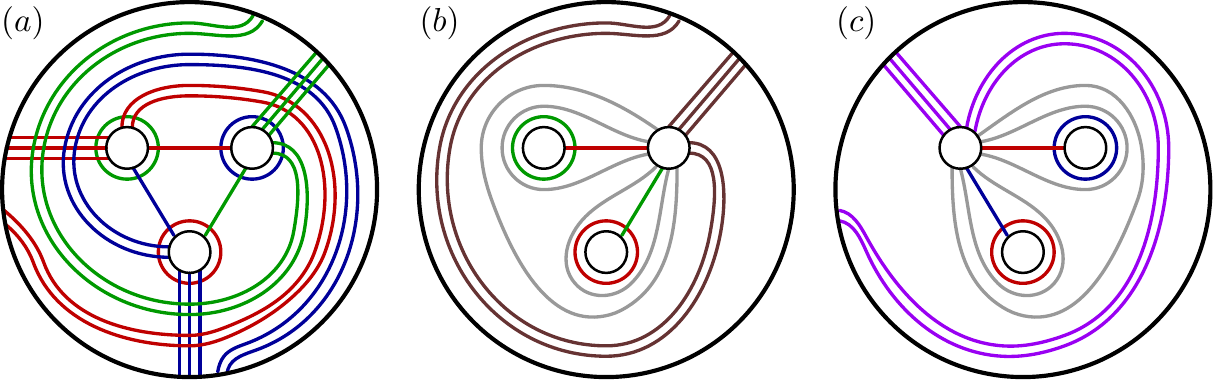}
\caption{(a) A trisection diagram for a spun lens space. (b) Red-green efficient defining pair. (c) Red-blue efficient defining pair.}
\label{fig:trisection_spun_lens}
\end{figure}

\begin{figure}[h]
\centering
\includegraphics[width=13cm]{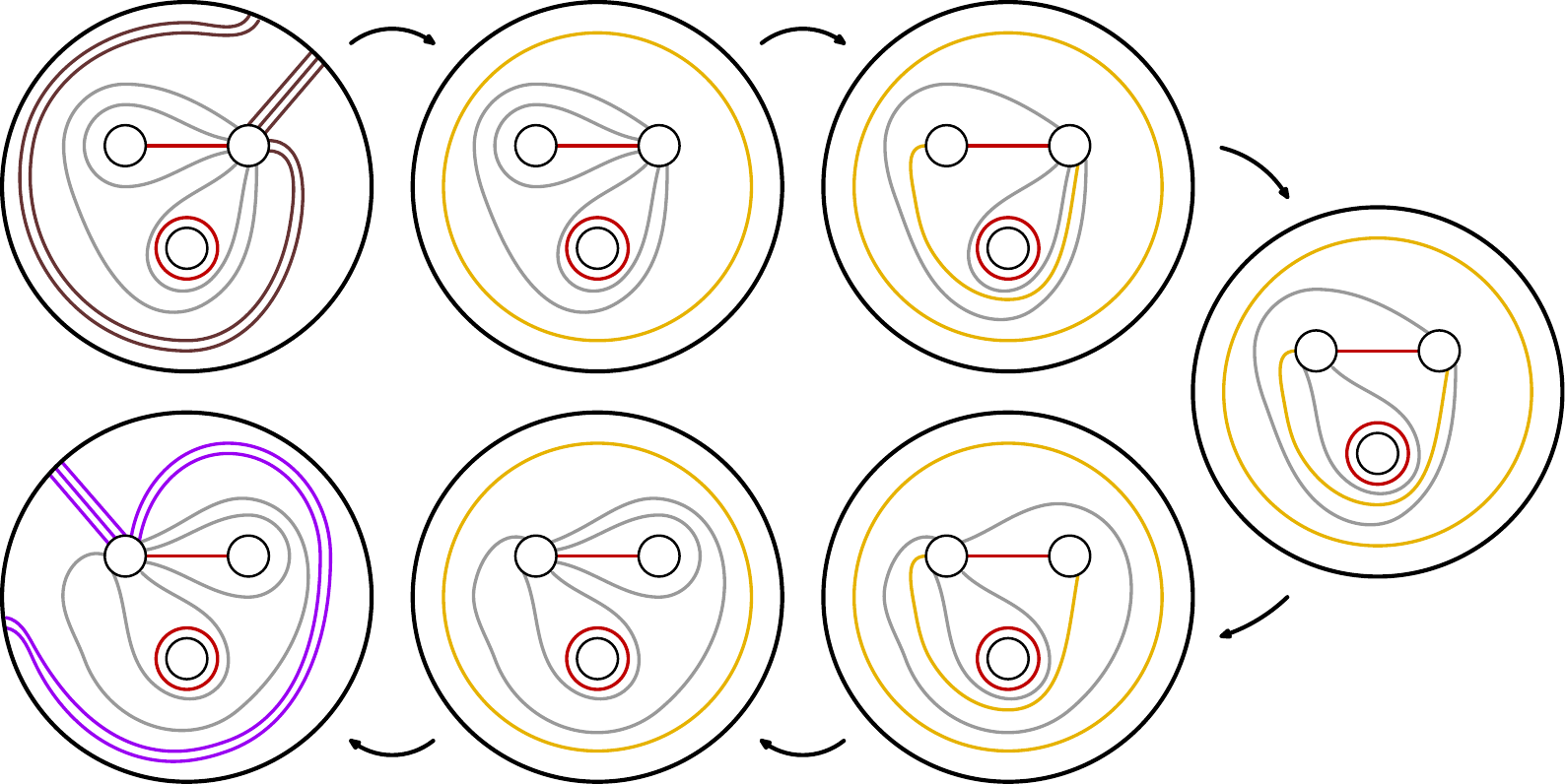}
\caption{A path in $\mc{C}^*(\Sigma_{3,0})$ between the pairs of pants in the red handlebody of \autoref{fig:trisection_spun_lens}.}
\label{fig:trisection_spun_lens2}
\end{figure}

\end{example}

\begin{example}[Trisections of $\#^m S^2\times S^2$]\label{subsec:S2xS2}
For $m>1$, let $Y_m$ be the connected sum of $m$ copies of $S^2\times S^2$. In what follows, we will see that $6m\leq \Lcal_3^*(Y_m)\leq 9m$. 

First, recall that $\mathcal{L}_3^*$ is defined over all minimal genus trisections of $Y_m$. Since the second Betti number is additive and gives a lower bound for the trisection genus \cite{ChuTillman}, we get that the trisection genus of $Y_m$ is equal to $2m$. \autoref{fig:s2xs21}(a) contains an example of a minimal genus trisection $\Tcal$ of $Y_m$. 
Such trisection is the connected sum of $m$ copies of the genus two trisection of $S^2\times S^2$. \autoref{fig:s2xs21}(b) showcases how to choose curves on each genus two summands of $\Tcal$ to obtain efficient defining pairs for $\Tcal$. The path in \autoref{fig:s2xs22} can be applied to each summand to obtain a path of length $3m$ in the blue disk set. One can find similar paths on the red and green islands and conclude that $\Lcal_3^*(\Tcal)\leq 3(3m)$.  

\begin{figure}[h]
\includegraphics[width=11cm]{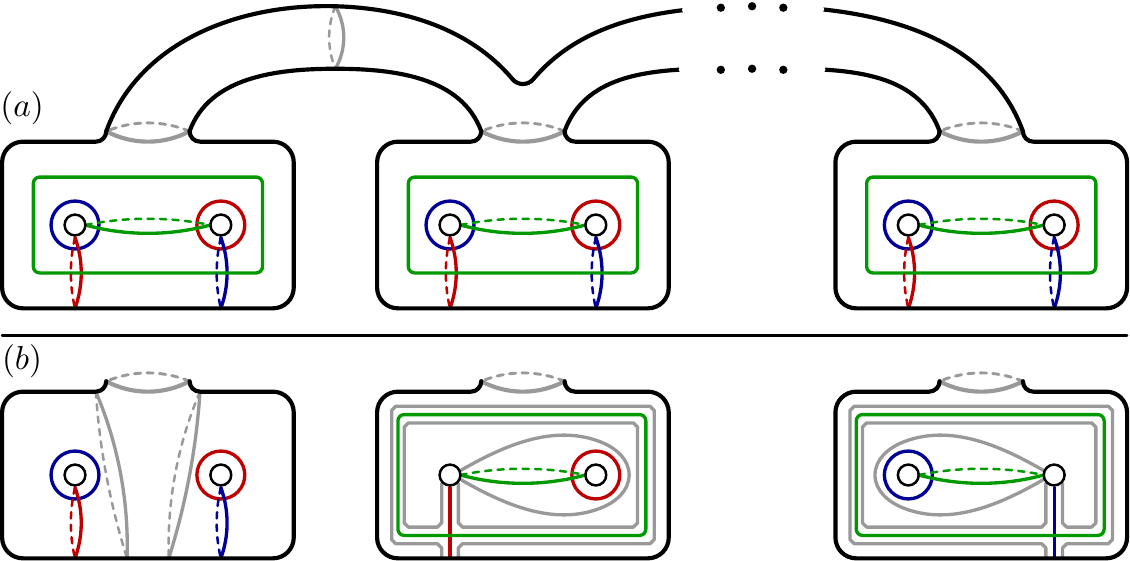}
\centering
\caption{(a) $(2m,0)$-trisection of $\#^m S^1\times S^2$. (b) Efficient defining pairs.}
\label{fig:s2xs21}
\end{figure}

\begin{figure}[h]
\includegraphics[width=9cm]{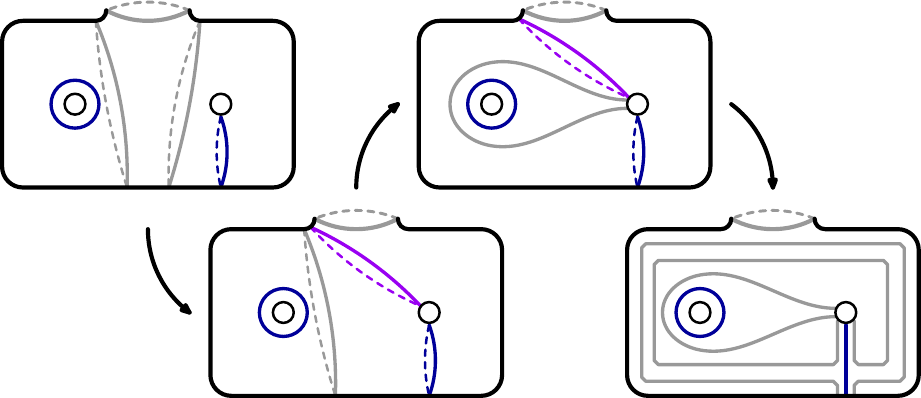}
\centering
\caption{A path in the blue disk set.}
\label{fig:s2xs22}
\end{figure}

Let $\Tcal$ be a minimal genus trisection for $Y_m$. 
By the above, the genus of $\Tcal$ is $2m$ and $k=0$. 
Thus, by \autoref{lem:efficient} there are $g-k=2m$ S-moves in any efficient pair. In each S-move occurring in a one-holed torus, we exchange a curve with a dual curve that intersects the original curve once. Each S-move then gives rise to a separating waist curve (the boundary of the one-holed torus). In conclusion, there are $2m$ such waist curves for a pants decomposition that is a vertex in a disk set of an efficient pair. Each such waist curve that stays stationary in a disk set gives rise to a reducing sphere decomposing our trisection as $\Sigma = \Sigma_1\#\Sigma_2$, where $\Sigma_1$ has genus 1. By the classification of genus one trisections, $Y_m = \#^m S^2\times S^2 = X \# \mathbb{CP}^2$ or $X \#\overline{\mathbb{CP}^2}$ for some 4-manifold $X.$ This cannot happen since $Y_m$ has an intersection form that is not equivalent to that of $X \#\mathbb{CP}^2$ and $X \#\overline{\mathbb{CP}^2}$. This argument can be repeated for any of the three disk sets of the trisection giving the distance of at least $2m$ in each disk set. Hence, $3(2m)\leq \Lcal_3^*(Y_m)$.

As some minimal genus trisections of $Y_m$ are reducible (see \autoref{fig:s2xs21}(a)), the lower bounds from \autoref{sec:lower_bounds} cannot be used to estimate $\Lcal_3^*(Y_m)$. A (hard) interesting problem is to give lower bounds for $\Lcal^*_n$-invariants of reducible manifolds.

\begin{conjecture}
The $\Lcal_3^*$-invariant of $Y_m$ is equal to $9m$. 
\end{conjecture}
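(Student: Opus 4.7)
The plan is to sharpen the lower bound from $6m$ to $9m$, matching the upper bound obtained from the iterated connected sum trisection. I would split the argument into two parts: first, establishing the base case $\Lcal_3^*(S^2\times S^2) = 9$, and second, proving an additivity estimate $\Lcal_3^*(Y_m) \geq m \cdot \Lcal_3^*(S^2\times S^2)$ under connected sum of minimal genus trisections.

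For the base case, the Meier-Zupan classification gives a unique standard genus two trisection of $S^2\times S^2$ up to diffeomorphism, so its three disk sets in $\mc{C}^*(\Sigma_2)$ can be enumerated explicitly. The preceding argument restricted to $m=1$ shows that the two separating waist curves produced by the two S-moves in an efficient defining pair cannot both be stationary in any disk set, giving distance at least $2$ per disk set. To upgrade this to $3$ per disk set I would show that after realizing the S-moves, at least one of the remaining non-separating curves in the pants decomposition must also be swapped via an A-move to bound a compressing disk in the adjacent handlebody. If such a curve were stationary throughout a disk set, then together with one of the waist curves it would provide a second genus-one reducing configuration, contradicting the standard form or violating the intersection form of $S^2 \times S^2$ as in the existing argument.

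For additivity, the trisection of $Y_m$ pictured in the preceding subsection contains $m-1$ reducing spheres separating the $m$ summands, and I would show that any efficient loop in $\mc{C}^*(\Sigma_{2m})$ can be replaced by one along which these $m-1$ summand separators remain stationary. Moving such a separator would produce, in the corresponding disk set, a non-standard pants decomposition whose shifted summand separator would decompose $Y_m$ as a connected sum involving $\mathbb{CP}^2$ or $\overline{\mathbb{CP}^2}$, contradicting the intersection form. Once the separators are stationary, the efficient loop decomposes as a disjoint union of $m$ loops in each summand's $\mc{C}^*(\Sigma_2)$, each of length at least $9$ by the base case, yielding the bound $\Lcal_3^*(Y_m) \geq 9m$.

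The main obstacle is proving that every minimal genus trisection of $Y_m$ admits a system of $m-1$ reducing spheres compatible with the summand decomposition. For $m \geq 2$ this is an open problem in trisection theory, and an affirmative answer would not only settle the present conjecture but also bear on the classification of higher genus trisections. A secondary technical difficulty lies in showing that efficient loops can be isotoped to keep the summand separators stationary without increasing their length; this likely requires a delicate surgery argument on geodesics in $\mc{C}^*(\Sigma_{2m})$, perhaps in the spirit of the efficient position results of \autoref{sec:efficient}.
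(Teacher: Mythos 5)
This statement is a \emph{conjecture} in the paper; the authors offer no proof, only the two-sided estimate $6m \leq \Lcal_3^*(Y_m) \leq 9m$ established in the surrounding paragraphs. So there is no paper proof to compare against, and your proposal should be judged as an attempt at an open problem -- which, to your credit, you acknowledge.

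Your two-step strategy (base case plus additivity under connected sum) is the natural one, and you correctly identify the central obstruction: whether every minimal-genus trisection of $Y_m$ carries a compatible system of $m-1$ reducing spheres is precisely the open additivity question for trisection genus, and without it, neither the restriction to the pictured diagram nor the "stabilize-the-separators" surgery on efficient loops can be justified. That gap is fatal for the time being, and you are right to flag it.

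There are, however, two specific errors in the base-case sketch that would need fixing even if the additivity step were handled. First, for $m=1$ (genus two), the two $S$-moves guaranteed by \autoref{lem:efficient} occur in the \emph{two} once-holed tori bounded by the \emph{single} separating curve of $\Sigma_2$; the waist curves coincide. The argument of the text, restricted to $m=1$, therefore yields distance $\geq 1$ in each disk set, hence $\Lcal_3^*(S^2\times S^2)\geq 3$, not $\geq 6$ as you assert. Bridging the gap from $3$ to $9$ would require entirely new input beyond the waist-curve obstruction. Second, your appeal to an "A-move" is out of place here: with $b=c=0$, efficient pairs consist of $g-k$ $S$-moves and no $A$-moves at all (\autoref{lem:efficient} part (1)), and the internal edges inside a disk set that contribute to $\Lcal^*$ are arbitrary edges of $\mc{C}^*(\Sigma)$, not required to be $A$- or $S$-moves. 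So the claim that "a remaining non-separating curve must also be swapped via an A-move" is both misnamed and unjustified; you would need a genuinely new argument forcing the non-separating curves (not just the waist) to move inside each island.

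In short: your outline goes in the right direction, but the base case as written overstates what the existing argument delivers, and the additivity step sits behind an open problem that your sketch does not circumvent.
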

\end{example}

\subsection{Quadrisections}

\begin{example}[Quadrisection of $S^2\times \lbrace \text{pt} \rbrace \subset S^2\times S^2$] \label{exmp:s2xpt}
Consider the surface $F=S^2\times \lbrace \text{pt} \rbrace \subset S^2\times S^2$. By work of \cite{islambouli2022toric}, this admits a multisection with $(g,b)=(1,1)$ depicted in \autoref{fig:01curve}. Since this quadrisection is c-irreducible, \autoref{thm:ogawaV1} implies that $\Lcal_4^*\geq 2$. On the other hand, the loop in \autoref{fig:01curve} shows that $\Lcal_4^*\leq 2$ and so $\Lcal_4^*(S^2\times S^2,S^2\times \{\text{pt}\})=2$.
\end{example}

\begin{figure}[h]
\centering
\includegraphics[width=9cm]{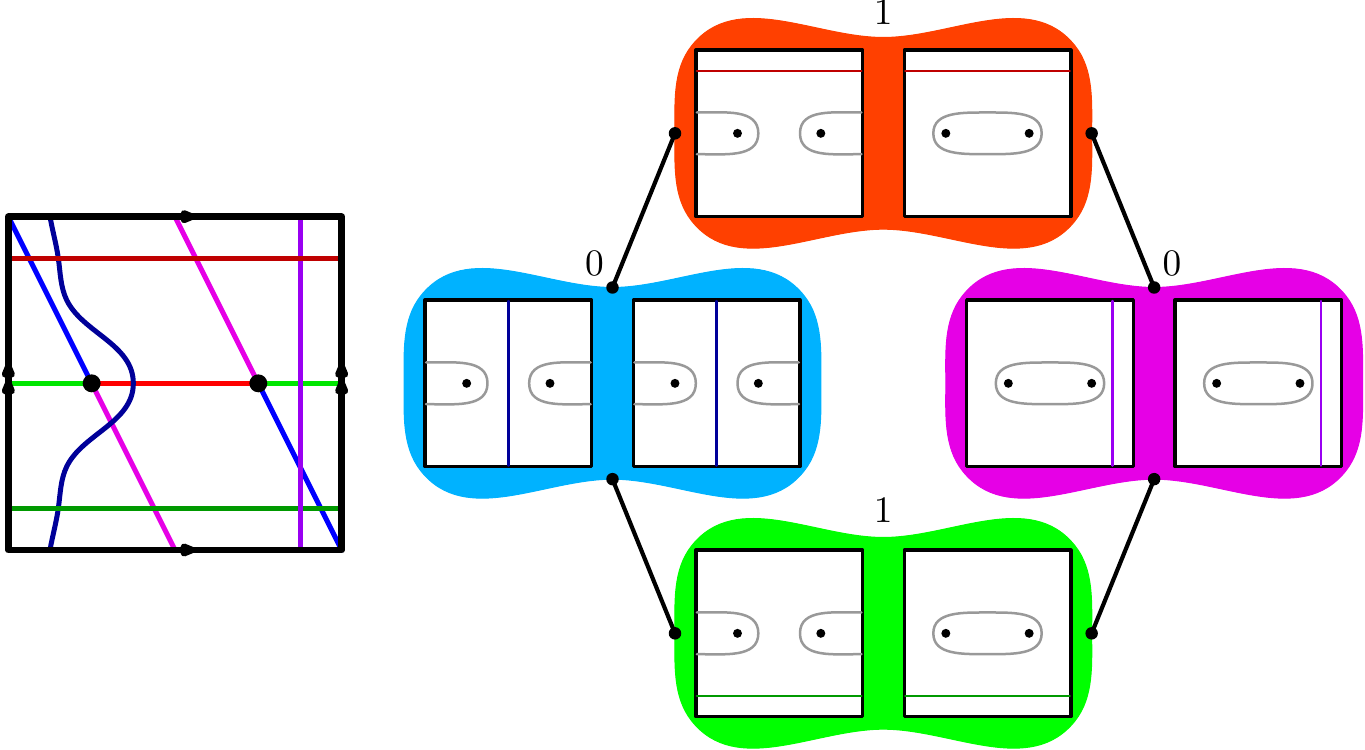}
\caption{A diagram for $(S^2\times S^2,S^2\times\{pt\})$ and a loop showing that $\mathcal{L}^*\leq 2$.}
\label{fig:01curve}
\end{figure}

\begin{example}[Quadrisection of the double of the standard ribbon disk for the knot $6_1$]
We denote the surface of interest by $F$ and by $X$ the 2-fold cover of $S^4$ branched along $F$.  \autoref{fig:steve} shows a path in $\mathcal{L}^*$ of length six for a quadrisection of $F$, thus $\Lcal^*_4(F)\leq 6$.
By \autoref{lem:L_under_covers}, $\mathcal{L}_4^*(X)\leq \mathcal{L}_4^*(F)$. After drawing a Kirby diagram for $X$ (see Figure 3 of \cite{owens2019knots}), we can see that $X$ has a finite non-trivial fundamental group. By the contrapositive of \autoref{thm:L>=6}, $6 \leq \mathcal{L}_4^*(X)$ and so $\Lcal_4^*(X)= \mathcal{L}_4^*(F)=6$.
\end{example}

\begin{figure}[h]
\includegraphics[width=10cm]{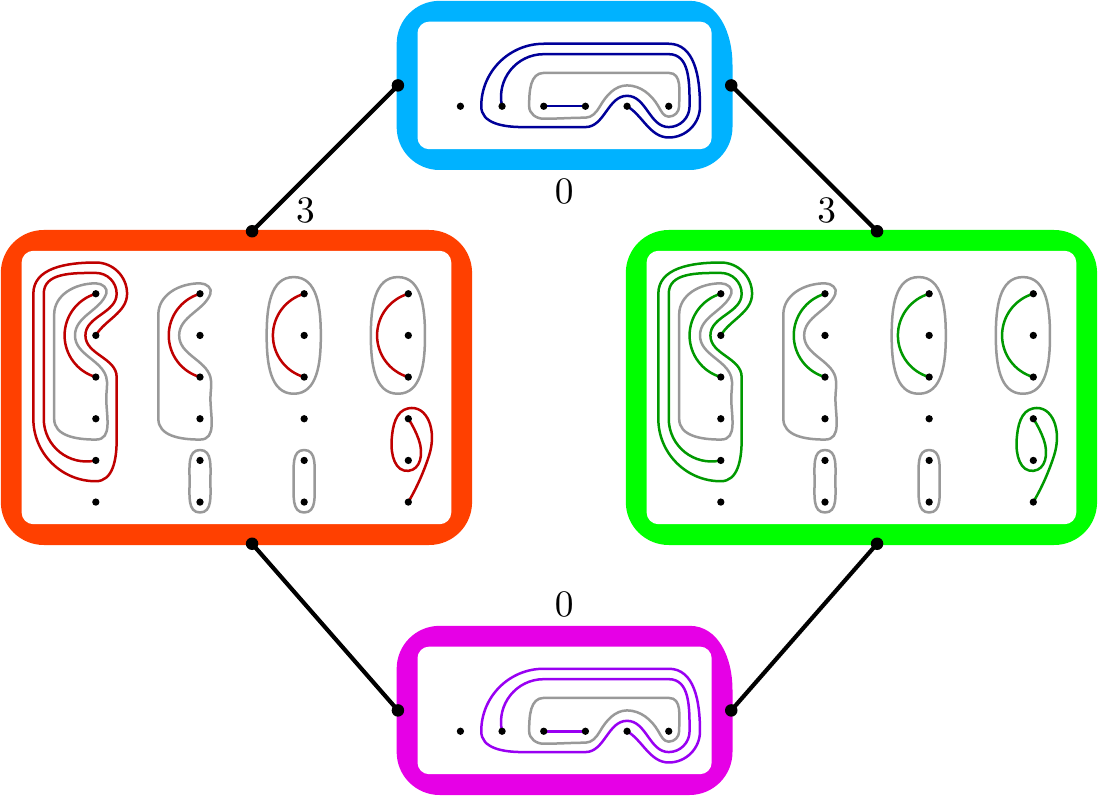}
\centering
\caption{The double of the standard ribbon disk for the Stevedore's knot.\label{fig:steve}}
\end{figure}

\subsection{Quadrisections of spun manifolds}
The goal of this section is to compute $\Lcal_4^*$ for spun knots and spun lens spaces. 

\begin{theorem}
\label{thm:spunlens}
Let $X_{p}$ be the spin of a lens space $L(p,q)$. Then $\mathcal{L}_4^*(X_{p}) = 6.$
\end{theorem}
\begin{theorem}
\label{thm:spunknots}
Let $F\subset S^4$ be the spin of a $2$-bridge knot in $S^3$. Then $\mathcal{L}_4^*(F) = 6.$
\end{theorem}

\noindent We divide the proofs of these results into the following steps, and then tie everything together in \autoref{sec:4.3.4}.

\begin{framed}
\textbf{Step 1}: Find quadrisections for twist-spun knots (\autoref{sec:4.3.1}). 

\textbf{Step 2}: Build an upper bound for $\Lcal_4^*(F)$ (\autoref{sec:4.3.2}). 

\textbf{Step 3}: Relate $\Lcal_4^*$ of $F$ and its 2-fold cover $X$ (\autoref{sec:4.3.3}). 
\end{framed}

\subsubsection{Diagrams of twist-spun knots} \label{sec:4.3.1}

We begin by describing the process of getting a quadrisection from a banded unlink diagram of an $n$-twist spun $b$-bridge link. In \cite{meier2017bridgeS4}, the authors consider the standard banded unlink diagram with $b-1$ bands for the ribbon disks/annuli, whose double is our spun link, and $b-1$ dual bands. Meier and Zupan then perturbed the unlink $2b$ times to turn it into a banded bridge splitting, which is in one-to-one correspondence with a trisection.

For a quadrisection, the banded bridge splitting corresponding to it has fewer requirements. Namely, it is not necessary for all the bands to each have a dual bridge disk simultaneously. Therefore, starting with the standard banded unlink as above, one just needs to perform $b-1$ perturbations instead of $2b-2$ perturbations.

\begin{proposition} \label{prop:quadrisection_spun_knot}
An $n$-twist spun $b$-bridge link admits a $(2b-1,b)$-quadrisection. 
\end{proposition}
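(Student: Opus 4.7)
The plan is to adapt the Meier--Zupan correspondence between banded bridge splittings and bridge multisections from the trisection to the quadrisection case. As noted in the preceding paragraph, the essential simplification for quadrisections is that each band needs a dual bridge disk on only one specified side of the splitting rather than on both sides simultaneously; this reduces the required perturbation count from $2b-2$ (for trisections) down to $b-1$.

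Concretely, I would start with the standard banded unlink diagram for the $n$-twist spin: a $c$-component unlink $L \subset S^3$ in its $b$-bridge position, equipped with $b-1$ ``ribbon'' bands $v_1,\dots, v_{b-1}$ realizing the standard ribbon disk or annulus for the underlying $b$-bridge link $K$, together with $b-1$ dual bands $w_1,\dots, w_{b-1}$ arising from the doubling operation that produces the spun surface $F$. I would then perform $b-1$ perturbations of the bridge splitting of $(S^3, L)$, each chosen to pair a ribbon band $v_j$ with its dual band $w_j$ on opposite sides of the splitting. Because the quadrisection correspondence does not demand that all bands be simultaneously dual-disk equipped on a single side, these $b-1$ perturbations suffice to upgrade the banded unlink to a banded bridge splitting compatible with a four-sector decomposition $(S^4,F) = \bigcup_{i=1}^{4}(X_i, D_i)$.

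The main obstacle is the bookkeeping: one must verify that the $b-1$ perturbations can in fact be arranged in the desired pairing pattern of bands with dual bands, and then confirm that the four assembled sectors $X_i$ each have the four-dimensional handlebody form required by \autoref{def:multisection}. Reading off parameters should then be routine: the bridge number $b$ is inherited from the bridge number of the underlying link $K$, while the central surface genus $2b-1$ arises from combining the initial $b$-bridge data with the $b-1$ perturbations and the extra genus contributed by the four-sector assembly---consistent with the fact that even in the $b=1$ case, the trivial $2$-knot already receives a $(1,1)$-quadrisection rather than the $(0,1)$-trisection it admits as a surface in $S^4$.
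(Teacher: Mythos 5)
Your plan correctly identifies the strategy used in the paper: start from the standard banded unlink for the twist-spun link, adapt Meier--Zupan's correspondence between banded bridge splittings and bridge multisections, and exploit the fact that a quadrisection imposes weaker compatibility conditions on the bands than a trisection, so only $b-1$ perturbations rather than $2b-2$ are needed. However, everything you defer as ``bookkeeping'' is the actual substance of the argument, and you do not carry it out. After perturbing, the paper writes down the specific conditions the resulting diagram must satisfy --- $L$ is in bridge position, each band is carried by a surface-framed arc, the arcs for one batch of bands together with the shadows of the bridge disks form a pairwise-disjoint embedded family in the bridge sphere, and after smoothing that batch the arcs for both batches together again form such a family --- and then builds $T_1,\dots,T_4$ concretely ($T_1$ and $T_2$ from the bridge splitting of $L$ with all bands pushed into $T_2$; $T_3$ by band-surgering one batch; $T_4$ by band-surgering the rest) and verifies that each $T_i$ is a trivial tangle and each $T_i\cup\overline{T_{i+1}}$ is an unlink. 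Without that verification the proposition is asserted, not proved.

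Two details of your sketch also diverge from what actually happens. You describe each perturbation as ``pairing a ribbon band $v_j$ with its dual band $w_j$ on opposite sides of the splitting,'' but in the construction all bands are pushed to the same side $T_2$; what distinguishes the two collections of bands is which band-surgery step, $T_2\mapsto T_3$ or $T_3\mapsto T_4$, they participate in, not which side of the splitting they occupy. And your parameter read-off misinterprets the statement: in the proposition $2b-1$ is the bridge number and $b$ is the patch count of a \emph{genus-zero} bridge quadrisection of $(S^4,F)$. This is how the paper subsequently uses the result --- it concludes that the spin of a $2$-bridge knot has bridge number at most $3=2\cdot2-1$, refers to ``the $3$-bridge quadrisection of $F$,'' and works in $\mathcal{C}^*(\Sigma_{0,6})$, a six-punctured sphere --- and also matches the arithmetic that $b-1$ perturbations of a $b$-component unlink yield a $(2b-1)$-bridge position. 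Reading $2b-1$ as the genus of the central surface and $b$ as the bridge number, as you do, is backwards, and your heuristic genus computation is computing the wrong quantity.
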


\begin{proof}
    It is well-known that any knotted surface in $S^4$ admits a presentation as a banded unlink: an unlink such that after the band surgery is performed, we obtain another unlink. The banded unlink $(L,v)$ corresponding to the $n$-twist spun of links is depicted in \autoref{fig:4secperturb} (left). We think of $(L,v)$ as being contained in $S^3$. We perturb $(L,v)$ to obtain the position depicted in \autoref{fig:4secperturb} (right). Observe that this perturbed form satisfies the following conditions:
    \begin{enumerate}
        \item[(1)] The unlink $L$ is in bridge position.
        \item[(2)] The bands $v$ are described by surface-framed arcs $y^*$.
        \item[(3)] The surface-framed arcs describing the $n-1$ green bands and the shadows of red bridge disks together forms a collection of embedded, pairwise disjoint arcs in the bridge sphere.
        \item[(4)] After smoothing along the green bands, the surface-framed arcs describing the green bands and the surface-framed arcs describing the blue bands together forms a collection of embedded, pairwise disjoint arcs in the bridge sphere.
    \end{enumerate}

\begin{figure}[h]
\centering
\includegraphics[width=9cm]{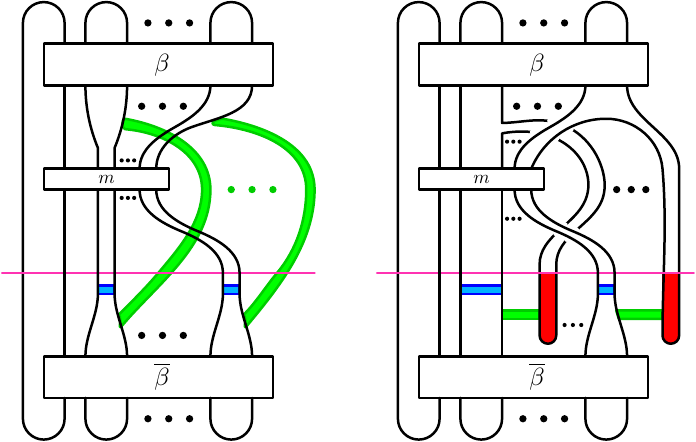}
\caption{From a banded unlink in $S^3$, we do one perturbation for each of the $n-1$ bands coming from the 1-handles of the ribbon surface bounded by the link that we are twist spinning.
\label{fig:4secperturb}}
\end{figure}

   We now argue that this gives a quadrisection with four trivial tangles $T_1$ through $T_4$, whose pairwise unions are unlinks.

    Recall that the unlink $L$ is in bridge position, and so it is divided into two trivial tangles $T_1$ and $T_2$. We push all the bands $v$ to be contained in $T_2$. Performing the band surgeries along the bands that intersect the red bridge disks give $T_3$. Finally, performing the band surgeries on the remaining bands gives $T_4.$ The tangles $T_1$ and $T_2$ are trivial by construction. Each of the remaining tangles comes from performing band surgeries on a trivial tangle where the bands satisfy conditions (2), (3) and (4) above. We get two new bridge disks for each band $v_i$: a shadow of a bridge disk is the surface-framed arc $y_i^*$ describing the band and the other shadow comes from taking $y_i^*$ together with the shadows of bridge disks that intersect the sides of $v_i.$ 

    Finally, $T_1\cup \overline{T_2}$ is already an unlink by definition. To see that the link $T_1 \cup \overline{T_4}$ is also an unlink since it is obtained from the original banded unlink by band surgeries. For the other pairs $T_i\cup \overline{T_{i+1}}$, one can construct the embedded disks that the link bound explicitly since the bands have been isotoped to be in a restricted position (see \autoref{fig:boundeddisks}).
\end{proof}

\begin{figure}[h]
    \centering
    \includegraphics[width=5cm]{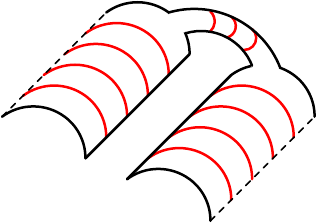}
    \caption{The product $T_i\times [\epsilon,0)$ of a trivial tangle with an interval together with a band $v$ described by a surface-framed arc followed by $T_i$ smoothed along $v$ is a trivial disk system.
    \label{fig:boundeddisks}}
\end{figure}

\subsubsection{Upper bound for \texorpdfstring{$\Lcal_4^*(F)$}{L*(F)}} \label{sec:4.3.2}

Let $F$ be the spun of a 2-bridge knot. By \autoref{prop:quadrisection_spun_knot}, the bridge number of $F$ is at most 3. On the other hand, one can see that surfaces in $S^4$ with bridge number less than three are unknotted. Thus, the bridge trisection described \autoref{prop:quadrisection_spun_knot} for $F$ is minimal. 
\autoref{fig:quadrisection_trefiol} is the 4-plane diagram of the spun trefoil obtained via this process and \autoref{fig:bridgepath} computes an upper bound for the $\Lcal_4^*$-invariant of such quadrisection. The following lemma generalizes this for all two-bridge knots.   

\begin{figure}[h]
\includegraphics[width=14cm]{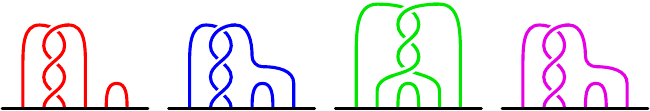}
\centering
\caption{The spine of a quadrisection of the spun trefoil in $S^4$.}
\label{fig:quadrisection_trefiol}
\end{figure}

\begin{lemma}\label{lem:L_at_most_6}
Let $F$ be the spin of a $2$-bridge knot. Then $\Lcal_4^*(F)\leq 6$.
\end{lemma}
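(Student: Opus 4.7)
The plan is to generalize the explicit quadrisection and loop depicted for the spun trefoil in \autoref{fig:quadrisection_trefiol} and \autoref{fig:bridgepath}. Fix a 2-bridge knot $K$ and let $F \subset S^4$ be its spin. By \autoref{prop:quadrisection_spun_knot}, $F$ admits a $(3,2)$-quadrisection $\Tcal$ whose central surface is $\Sigma_{3,4}$. Pants decompositions of this surface contain $3(3)+2(2)-3 = 10$ curves, and since 2-bridge knots are classified by rational numbers $p/q$, the shadow diagram of $\Tcal$ has a uniform topological skeleton with $p/q$ recorded in a localized twisting region of the bridge surface.

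First, I would identify efficient defining pairs $(P_i, P'_i)$ for each consecutive pair of disk sets $(\Dcal_i, \Dcal_{i+1})$. Since $T_i \cup T_{i+1}$ forms a bridge splitting of an unlink in $\#^{k_i} S^1 \times S^2$, \autoref{lem:efficient} guarantees that efficient pairs exist and that they can be read directly from the diagram via standard bridge disks for the tangle strands together with meridional compressing disks for the $S^1 \times S^2$ summands.

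Next, I would exhibit an explicit loop $P_1 \to P'_1 \to P_2 \to P'_2 \to P_3 \to P'_3 \to P_4 \to P'_4 \to P_1$ in $\mc{C}^*(\Sigma_{3,4})$ where each transition $P'_i \to P_{i+1}$ takes place inside the disk set $\Dcal_{i+1}$. Mimicking the spun trefoil model, the four disk-set contributions should sum to $6$. Most of the ten pants curves remain fixed throughout the loop and bound obvious c-disks in every tangle, so the edges that are counted correspond only to moves among the few curves that interact with the perturbation arcs.

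The main obstacle is confirming that the loop works uniformly for every 2-bridge knot rather than only the trefoil. One must verify that each intermediate pants decomposition encountered along the path actually lies in the correct disk set. The key observation is that the moving curves sit in the standardized portion of $\Sigma_{3,4}$ and interact with the twisting region only via ambient isotopy, so their c-disk status is independent of $p/q$. Granting this, the loop yields $\Lcal_4^*(\Tcal) \leq 6$, and since $\Tcal$ is a minimal-complexity bridge quadrisection of $F$ (by the observation preceding the lemma that $b < 3$ forces the surface to be unknotted), we conclude that $\Lcal_4^*(F) \leq 6$.
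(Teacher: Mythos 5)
Your overall strategy---generalize the explicit trefoil loop from \autoref{fig:bridgepath}, appeal to \autoref{prop:quadrisection_spun_knot} for the quadrisection, and observe that the moving curves are localized away from the twist region---matches the paper's approach. However, there is a concrete error that undermines the whole setup: you have the wrong central surface. The notation $(2b-1,b)$ in \autoref{prop:quadrisection_spun_knot} records (bridge number, number of disks per sector), not (genus, bridge number). For a 2-bridge knot ($b=2$), the resulting quadrisection has bridge number $2b-1=3$ on a \emph{genus-zero} central surface, so $\Sigma = \Sigma_{0,6}$; this is visible in \autoref{fig:bridgepath}, whose caption explicitly places the loop in $\mc{C}^*(\Sigma_{0,6})$, and is also recorded in \autoref{lem:L_under_covers} where the hypothesis is $(g,b)=(0,3)$. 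Consequently a pants decomposition has $3(0)+2(3)-3 = 3$ curves, not $10$. Your claimed $\Sigma_{3,4}$ with $10$ curves does not exist here. The mistake also makes your minimality remark self-defeating: you cite the fact that surfaces with $b<3$ are unknotted, but your claimed quadrisection has $b=2$, which would force $F$ to be unknotted.

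Even setting aside the surface error, the proposal does not produce a proof. The core content of the argument is the explicit loop, which you describe only as something that ``should sum to $6$'' and which works because the moving curves ``sit in the standardized portion.'' The paper does the work: the loop meets only two of the four disk sets ($T_2$ and $T_4$) nontrivially, each in a path of length three, and the three moves are described move-by-move: (1) a reducing curve for $T_1\cup T_2$ is replaced by a pivot curve that bounds a disk in neither $T_1$ nor $T_2$; (2) a cut-reducing curve for $T_1\cup T_2$ is replaced by a cut-reducing curve for $T_2\cup T_3$; (3) the pivot curve is replaced by a reducing curve for $T_2\cup T_3$, landing in an efficient pair for the next island. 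Without exhibiting these moves and checking each intermediate pants decomposition lies in $\Dcal_2$ (resp.\ $\Dcal_4$), you have sketched the shape of an argument but not given one.
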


\begin{proof}
Consider the quadrisection of $F$ given in \autoref{prop:quadrisection_spun_knot}. We will show that there is a loop in the dual curve complex intersecting $T_2$ (colored blue) in three edges and $T_4$ (colored purple) in three edges. The two paths of length three look identical except for the change in colors. An example of the loop and these paths for the spun trefoil is shown in \autoref{fig:bridgepath}.

Starting from pants decomposition $P_0$, which belongs to an efficient pair for $T_1\cup T_2$, we first move a reducing curve in $T_1\cup T_2$ to a pivot curve bounds a disk in neither $T_1$ nor $T_2.$ This is the pants decomposition $P_1$. For the second edge $P_1\mapsto P_2$, we move a cut-reducing curve in $T_1\cup T_2$ to a cut-reducing curve in $T_2\cup T_3$. Finally, we move the pivot curve to a reducing curve for $T_2$ and $T_3$ to obtain $P_3$, which belongs to an efficent pair for $T_2 \cup T_3.$
\end{proof}

\begin{figure}[h]
\includegraphics[width=10cm]{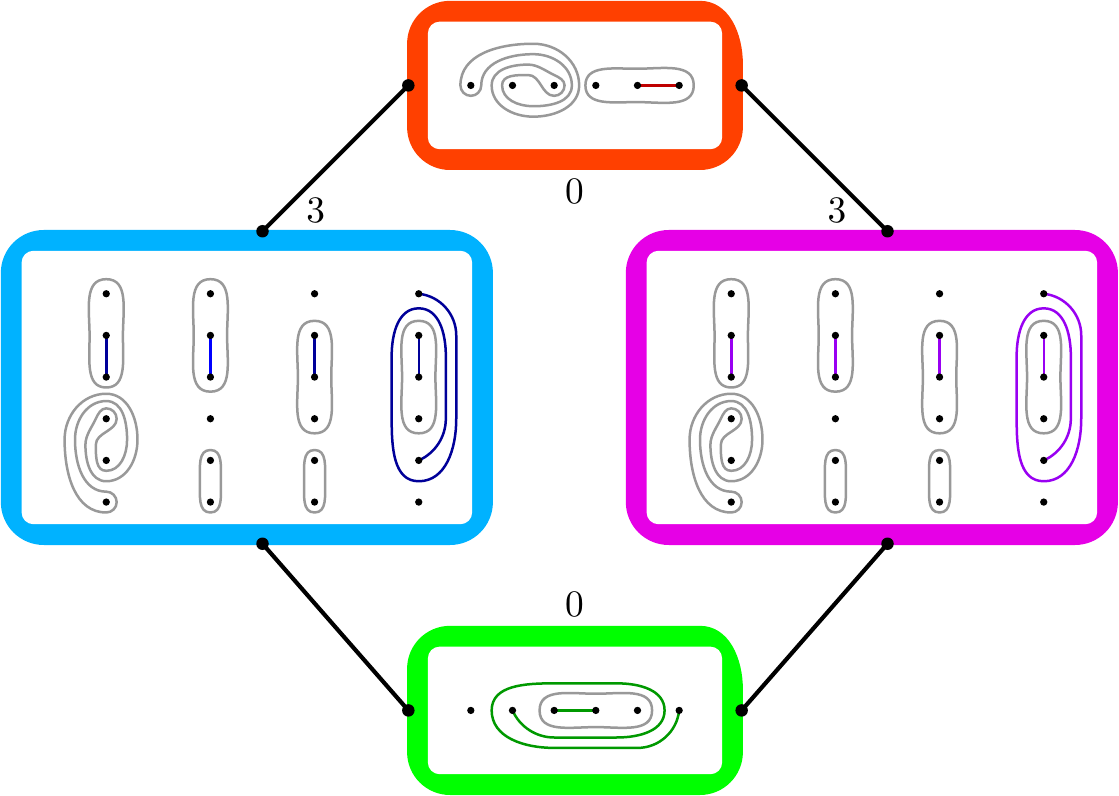}
\centering
\caption{A loop in $\mc{C}^*(\Sigma_{0,6})$ showing that $\mathcal{L}_4^*$ of a spun trefoil is at most 6.}
\label{fig:bridgepath}
\end{figure}

\subsubsection{\texorpdfstring{$F$}{F} and its 2-fold cover \texorpdfstring{$X$}{X}} \label{sec:4.3.3}

There is a close relationship between 3-bridge links and Heegaard splittings of genus two \cite[Theorem~8]{BirmanHilden75}. This relationship extends to the dual curve complexes of a six-punctured sphere and a genus two surface. For example, the loop of \autoref{fig:bridgepath} lies $\mc{C}^*(\Sigma_{0,6})$ and lifts to a loop in $\mc{C}^*(\Sigma_{2,0})$ depicted in \autoref{fig:arriba}. This new loop estimates the $\Lcal_4^*$-invariant of the 2-fold cover of $S^4$ along the spun trefoil. 
The following lemma builds upon this thought. 

\begin{figure}[h]
\includegraphics[width=10.5cm]{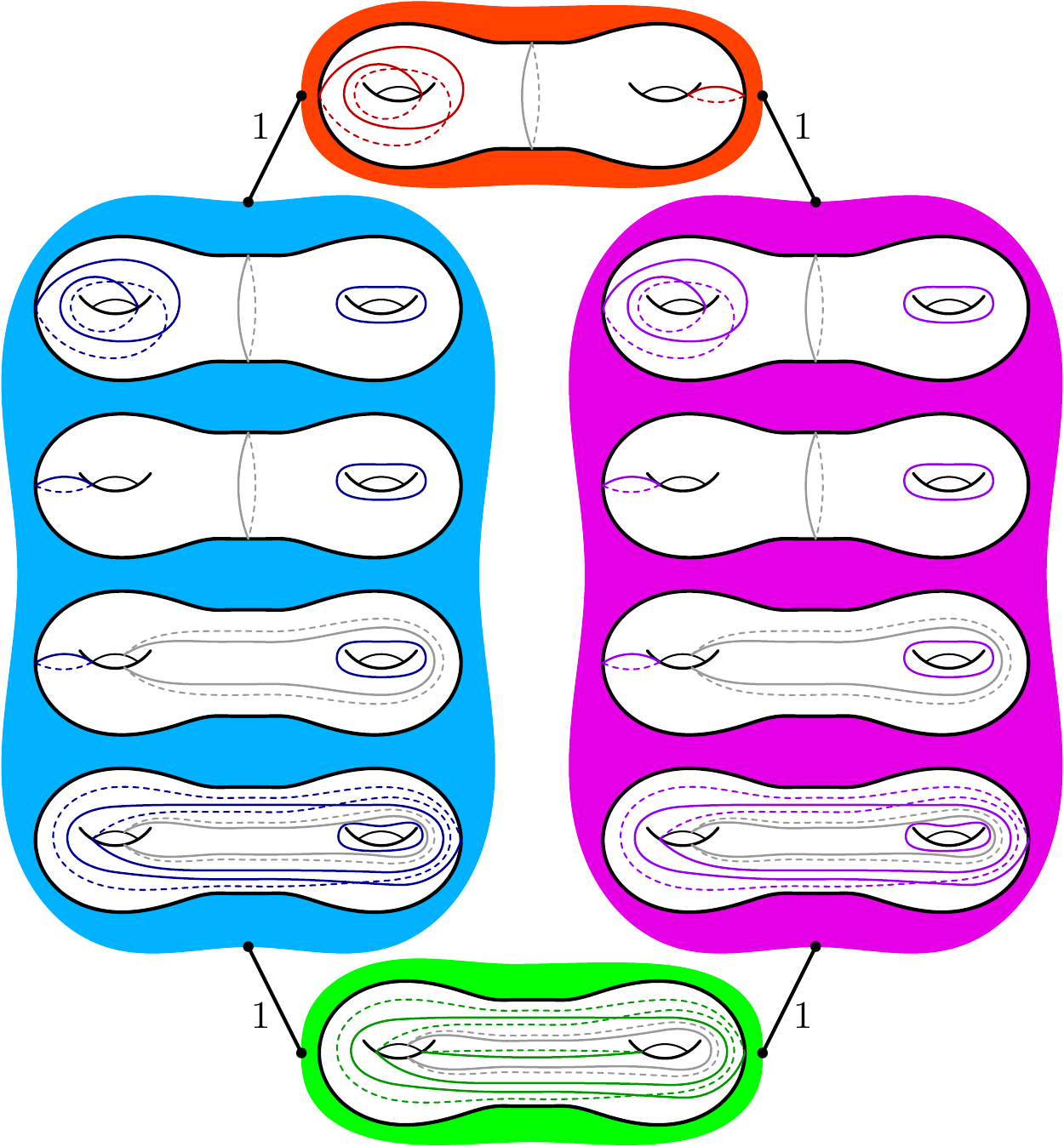}
\centering
\caption{A loop showing that $\mathcal{L}_4^*$ of a spun lens space is at most 6. All curves in this figure come from lifting curves appearing in \autoref{fig:bridgepath} to the 2-fold branched cover.}
\label{fig:arriba}
\end{figure}

\begin{lemma}\label{lem:L_under_covers}
Let $\Tcal$ be a $(g,b)$-multisection of a pair $(S^4,F)$ and let $\widetilde\Tcal$ be the multisection obtained by taking the $2$-fold cover of each piece. If $(g,b)=(0,3)$, then 
$$\mathcal{L}_n^*(\widetilde\Tcal)\leq \mathcal{L}_n^*(\Tcal).$$ 
\end{lemma}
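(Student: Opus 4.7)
The plan is to use the Birman--Hilden correspondence. The $2$-fold branched cover $\pi:\Sigma_{2,0}\to \Sigma_{0,6}$ at the six punctures is the central surface of $\widetilde\Tcal$; branching over each $4$-dimensional piece gives $\widetilde\Tcal$ with genus $2$ handlebodies $\widetilde H_i$ as its $3$-dimensional pieces. I would construct a map $\Phi:\mc{C}^*(\Sigma_{0,6})\to \mc{C}^*(\Sigma_{2,0})$ satisfying $\Phi(\Dcal_i)\subset \widetilde\Dcal_i$ and that is distance non-increasing, and then use $\Phi$ to lift an optimal configuration downstairs to witness $\Lcal_n^*(\widetilde\Tcal)\leq \Lcal_n^*(\Tcal)$.

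The map $\Phi$ is defined curvewise. An essential simple closed curve $\gamma\subset \Sigma_{0,6}$ separates the punctures as either $(2,4)$ or $(3,3)$. If $\gamma$ is of type $(3,3)$, a monodromy argument shows that $\pi^{-1}(\gamma)$ is a single separating curve on $\Sigma_{2,0}$ splitting the cover into two once-punctured tori. If $\gamma$ is of type $(2,4)$, then $\pi^{-1}(\gamma)$ consists of two parallel curves cobounding an annulus, namely the lift of the two-punctured disk bounded by $\gamma$, and these determine a single non-separating isotopy class on $\Sigma_{2,0}$. Let $\Phi(\gamma)$ be the resulting isotopy class on $\Sigma_{2,0}$ and extend to pants decompositions curvewise. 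Since two disjoint $(3,3)$ curves in $\Sigma_{0,6}$ are forced to be isotopic, any pants decomposition of $\Sigma_{0,6}$ consists of either three $(2,4)$ curves or one $(3,3)$ curve together with two $(2,4)$ curves; in each case a direct branched cover computation confirms that $\Phi(P)$ decomposes $\Sigma_{2,0}$ into two pairs of pants.

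Next, I would verify that $\Phi$ respects disk sets: a compressing disk in $(H_i,T_i)$ bounded by a $(2,4)$ curve lifts to two parallel compressing disks in $\widetilde H_i$, while a cut disk bounded by a $(3,3)$ curve lifts to a single compressing disk in $\widetilde H_i$ branched at the intersection with $T_i$. Either way, every curve of $\Phi(P)$ bounds a compressing disk in $\widetilde H_i$ whenever $P\in \Dcal_i$, so $\Phi(\Dcal_i)\subset \widetilde\Dcal_i$. Since edges in $\mc{C}^*(\Sigma_{0,6})$ swap a single curve and $\Phi$ acts curvewise, $\Phi$ sends each edge to an edge or identity of $\mc{C}^*(\Sigma_{2,0})$, hence is distance non-increasing. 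Applying $\Phi$ to an efficient configuration $(P_i,P_i')$ and the connecting geodesics realizing $\Lcal_n^*(\Tcal)$ then produces a configuration $(\Phi(P_i),\Phi(P_i'))\in \widetilde\Dcal_i\times \widetilde\Dcal_{i+1}$ with connecting paths whose total length is at most $\Lcal_n^*(\Tcal)$.

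The main obstacle will be verifying that each lifted pair $(\Phi(P_i),\Phi(P_i'))$ is actually an \emph{efficient} pair upstairs, i.e., that it realizes $d^*(\widetilde\Dcal_i,\widetilde\Dcal_{i+1})$ rather than merely giving an upper bound. The inequality $d^*(\widetilde\Dcal_i,\widetilde\Dcal_{i+1})\leq d^*(\Dcal_i,\Dcal_{i+1})$ is automatic from $\Phi$, but the reverse requires showing that any shortest path in $\mc{C}^*(\Sigma_{2,0})$ between the lifted disk sets can be made invariant under the hyperelliptic involution, so that it descends to a path of the same length in $\mc{C}^*(\Sigma_{0,6})$. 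This is the Birman--Hilden principle applied to paths in the dual curve complex, and it is the delicate point of the argument.
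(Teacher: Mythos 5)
Your approach matches the paper's: both rely on the Birman--Hilden correspondence between $\Sigma_{0,6}$ and its hyperelliptic double $\Sigma_{2,0}$. The paper states that the quotient map induces an isomorphism $\mc{C}^*(\Sigma_{2,0})\to\mc{C}^*(\Sigma_{0,6})$ and then verifies by case analysis that each type of move (a $(2,4)$ curve moving to a $(2,4)$ curve, a $(3,3)$ curve moving) lifts to a move upstairs. Your curvewise description of $\Phi$ and your check that $(2,4)$ compressing curves and $(3,3)$ cut curves lift to compressing curves in $\widetilde{H}_i$ is correct and in the same spirit. (One small slip: $\Phi$ is injective on isotopy classes of curves, so an edge of $\mc{C}^*(\Sigma_{0,6})$ always maps to an edge, never to the identity.)

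The efficiency concern you raise at the end is a genuine gap in your sketch, and you are right to single it out: $\Lcal_n^*(\widetilde\Tcal)$ is a minimum over \emph{efficient} pairs, and you have only established $\Phi(\Dcal_i)\subset\widetilde\Dcal_i$, which yields $d^*(\widetilde\Dcal_i,\widetilde\Dcal_{i+1})\leq d^*(\Dcal_i,\Dcal_{i+1})$ -- the wrong direction for concluding that a lifted efficient pair is efficient. Two fixes are available. The first, closest to your own outline, is to upgrade the inclusion to an equality $\Phi(\Dcal_i)=\widetilde\Dcal_i$: every compressing disk in $\widetilde{H}_i$ can be isotoped equivariantly for the hyperelliptic involution and therefore descends to a c-disk of $(H_i,T_i)$, so the isomorphism of $\mc{C}^*$ restricts to a bijection of disk sets and efficient distances match on the nose. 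The second is more elementary and uses \autoref{lem:efficient} directly: downstairs the efficient distance is $g-k+b-c=0-0+3-c_i$, while upstairs it is $2-\widetilde{k}_i+0-0$ where $\widetilde{k}_i=c_i-1$ because the double cover of $S^3$ branched over a $c_i$-component unlink is $\#^{c_i-1}S^1\times S^2$; these two quantities agree, so the lifted pair achieves the minimal possible distance between $\widetilde\Dcal_i$ and $\widetilde\Dcal_{i+1}$ and is therefore efficient. Either route closes the gap; the paper's proof relies implicitly on the first, while the second requires no appeal to equivariant disk theory.
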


\begin{proof}
Let $\Sigma$ and $\widetilde\Sigma$ be the central surfaces of $\Tcal$ and $\widetilde\Tcal$, respectively. We are assuming that $\Sigma$ is a six-punctured sphere and $\widetilde\Sigma$ is a closed surface of genus two. In particular, the quotient map by the hyperelliptic involution in $\widetilde \Sigma$ induces an isomorphism between $\mc{C}(\Sigma)$ and $\mc{C}(\widetilde\Sigma)$. This, in turn, descends to an isomorphism $\mc{C}^*(\widetilde\Sigma)\ra \mc{C}^*(\Sigma)$. 
We consider all the possible types of paths in the dual curve complex for $\Sigma$ and show that they each lift to a path in the dual curve complex for $\widetilde{\Sigma}.$

If a curve $x$ bounding two punctures moves to $x'$, then it moves to another curve bounding two punctures. The boundaries of the 4-holed sphere involved in the move includes the curve $y$ bounding an odd number of punctures in the pants decomposition, the other curve bounding two punctures, and the two punctures that $x$ surrounds. The curve $y$ lifts to the separating curve in $\widetilde{\Sigma}$ cutting it into two tori, each with one boundary component. The curves $x$ and $x'$ each lifts to a non-separating curve on contained in one of the tori. The corresponding move in $\widetilde{\Sigma}$ is then a dual path complex version of the $s$-move, which involves the 4-holed sphere whose boundaries are $\widetilde{y}$ and $\widetilde{x}$.

The second possibility is the cut curve $y$ moves to $y'$. Suppose first that $y'$ is a cut curve. Then, the boundaries of the 4-holed sphere involved in the move include the two other curves  in the pants the composition and two punctures. The two compressing curves lift to two non-separating curves, which will function as the boundaries of the 4-holed sphere in $\widetilde{\Sigma}$. The non-separability means that one curve gives rise to two components of the 4-holed sphere. The corresponding move $\widetilde{y}\mapsto \widetilde{y'}$ is then from a separating curve to a separating curve. If $y'$ is a compressing curve then, the 4-holed spheres in $\Sigma$ and $\widetilde{\Sigma}$ remain the same as when $y'$ is a cut curve. But now, $\widetilde{y'}$ is a non-separating curve.
\end{proof}

\subsubsection{Proof of \autoref{thm:spunlens} and \autoref{thm:spunknots}} \label{sec:4.3.4}

Let $F$ be a spun two-bridge knot in $S^4$ and let $\Tcal$ be the 3-bridge quadrisection of $F$ with $\Lcal_4^*(\Tcal)=\Lcal_4^*(F)$. Let $\widetilde\Tcal$ its 2-fold cover. By \autoref{lem:L_under_covers} and \autoref{lem:L_at_most_6}, 
\[\Lcal_4^*(\widetilde\Tcal)\leq \Lcal_4^*(\Tcal)=\Lcal_4^*(F)\leq 6.\]
 
On the other hand, $\widetilde\Tcal$ is a quadrisection for a spun lens space $X$ which we know has a fundamental group of order $0<p<\infty$. Thus $X$ is not diffeomorphic to a connected sum of copies of $S^1\times S^3$, $S^2\times S^2$, and $\pm \mathbb{CP}^2$. \autoref{thm:L>=6} implies that $6\leq \Lcal_4^*(\widetilde\Tcal)$. Therefore, $\Lcal_4^*(F)=6$. 

A similar proof shows that $\Lcal_4^*(X)=6$ for $X$ an arbitrary spun lens space. \hfill $\qed$

\section{Properties of efficient pairs} \label{sec:efficient}

Previous results on the Kirby-Thompson invariants relied heavily on how being efficient restricts the types of curves that appear in a pants decomposition. For instance, in the setting of nontrivial knotted surfaces in $S^4,$ each efficient pair contains a common curve that bounds an odd number of punctures. Following this precedent we study the properties of efficient pairs in this section, as a first step towards proving the theorems presented in \autoref{sec:lower_bounds} and \autoref{sec:genus_two_quadrisec}.

Zupan computed the bridge and pants distance of bridge splittings of knots in 3-manifolds $M$ such that $M$ does not contain an essential sphere \cite{zupan2013bridge}. The following lemma is a generalization of \cite[Lemma~5.6]{blair2020kirby} (see also \cite[Lemma~3.4]{aranda2022bounds}) to our setting and gives the calculations for the cases not addressed by Zupan. 
An interesting consequence of \autoref{lem:efficient} is that the set of efficient defining pairs in $\Pcal(\Sigma)$ is the same that the set of efficient defining pairs in $\mc{C}^*(\Sigma)$. One can also derive that the curves participating in efficient defining pairs arise from the standard Heegaard diagrams for unlinks in $\#^k S^1\times S^2$. 

\begin{lemma}\label{lem:efficient}
Let $(H,T)$, $(H',T')$ be two $(g,b)$-trivial tangles whose union is a $c$-component unlink $U_c$ in $Y_k \vcentcolon = \#^k S^1\times S^2$ in $(g,b)$-bridge position, with $\partial H = \partial H' = \Sigma$. Let $(P,P')$ be an efficient pair for $(H,T)$, $(H',T')$. Then the distance in $C^*(\Sigma)$ between $P$ and $P'$ is $d(P,P')=g-k+b-c$. Moreover, there is a geodesic $\lambda$ connecting $P$ to $P'$ satisfying the following properties. 
\begin{enumerate}
\item \label{lem:part_1} The path $\lambda$ consists of $(g-k)$ $S$-moves and $(b-c)$ $A$-moves.
\item \label{lem:part_2} Every curve in $P$ moves at most once, and furthermore:
\begin{enumerate}
\item every curve in $P$ intersects at most one curve in $P'$, and
\item if a curve in $P$ intersects a curve in $P'$, then they intersect in either one or two points.
\end{enumerate}
\item \label{lem:part_3} Only compressing curves move.
\end{enumerate}
\end{lemma}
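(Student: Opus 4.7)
The plan is to prove $d(P,P')=g-k+b-c$ by combining a concrete construction of a short path (upper bound) with a count of how many pants-decomposition curves must change (lower bound), extending Zupan's bridge-distance techniques from irreducible $3$-manifolds to the reducible case $Y_k=\#^k S^1\times S^2$.

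For the upper bound, I would reduce to a standard model. Any $(g,b)$-bridge splitting of the $c$-component unlink $U_c$ in $Y_k$ is isotopic to one obtained from a minimal $(k,c)$-splitting by $g-k$ stabilizations and $b-c$ perturbations. The minimal $(k,c)$-splitting admits a standard Heegaard diagram in which a single pants decomposition $P_0$ lies simultaneously in both disk sets: the $k$ non-separating compressing curves dual to the $S^1\times S^2$ summands together with the separating compressing and cut curves that split off the unknot components all bound c-disks on both sides. Each stabilization then introduces a dual pair of compressing curves meeting once on a one-holed torus (an S-move between the two sides), and each perturbation introduces a dual pair of cut curves meeting twice on a four-holed sphere (an A-move). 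Concatenating these moves produces a path of length $(g-k)+(b-c)$ between a vertex of $\mathcal{D}(H,T)$ and a vertex of $\mathcal{D}(H',T')$, which gives the upper bound.

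For the lower bound, I would associate to every pants decomposition $Q\in \mathcal{C}^*(\Sigma)$ the integer $r(Q)$ counting the curves of $Q$ that bound c-disks on both sides of $\Sigma$, verify that $r$ changes by at most one across each edge of $\mathcal{C}^*(\Sigma)$, and then bound $\max_Q r(Q)$ by the maximal number of pairwise disjoint c-reducing curves available in a $(g,b)$-bridge splitting of $(Y_k,U_c)$. The rigidity of unlinks in $\#^k S^1\times S^2$ and of their Heegaard diagrams up to handle slides pins this maximum at exactly $3k+2c-3$; the deficit $(3g+2b-3)-(3k+2c-3)$, combined with the fact that only S-moves can compensate for extra genus and only A-moves can compensate for extra punctures, yields the sharp lower bound $g-k+b-c$. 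Properties (1)--(3) then fall out of the extremal geodesic: (1) the S/A parity count is forced because S-moves cancel stabilizations and A-moves cancel perturbations; (2) no curve moves twice and intersection numbers are $1$ or $2$ since otherwise one could shortcut the path, contradicting efficiency, while S- and A-moves are \emph{defined} by intersection $1$ and $2$; and (3) only compressing curves need to move, because every cut curve in $\mathcal{D}(H,T)$ splits off an unknot component of $U_c$ and uniqueness of the unlink decomposition of $U_c$ lets us arrange efficient pairs with common cut curves.

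The main obstacle is the lower-bound count itself, precisely because $Y_k$ is full of reducing spheres, the hypothesis Zupan's original arguments excluded. What will be needed is a standardization lemma: every c-reducing curve in $\Sigma$ can, after handle slides and isotopy, be assumed to lie in the standard Heegaard diagram of $(Y_k,U_c)$. Once this is in place, the rest reduces to combinatorial bookkeeping on how S- and A-moves modify $r(Q)$, paralleling the arguments in \cite[Lemma~5.6]{blair2020kirby} and \cite[Lemma~3.4]{aranda2022bounds} but with the extra care demanded by the $k$ summands of $S^1\times S^2$.
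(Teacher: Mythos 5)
Your lower-bound argument has a genuine gap. You propose to bound the function $r(Q)$ (number of curves of $Q$ bounding c-disks on both sides) by $3k+2c-3$, but this quantity is negative in entirely legitimate cases: for a knot in $S^3$ one has $k=0$, $c=1$, so $3k+2c-3=-1$, while $r$ is manifestly nonnegative. In fact, for any efficient pair the common curves $P\cap P'$ all bound on both sides, so $\max r \geq |P\cap P'| = (3g+2b-3)-(g-k+b-c) = 2g+k+b+c-3$, which is strictly larger than your proposed cap whenever $g>k$ or $b>c$. The subsequent claim that the ``deficit'' $(3g+2b-3)-(3k+2c-3) = 3(g-k)+2(b-c)$ can be ``compensated'' at a rate of three per S-move and two per A-move is not an argument --- it is the desired conclusion restated. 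Each edge of $\mathcal{C}^*(\Sigma)$ changes exactly one curve, so a Lipschitz estimate on $r$ gives you only one unit per move, not three or two; making the stronger claim rigorous would require showing that each S-move \emph{forces} a once-holed torus boundary to be a fixed doubly-bounding curve and each A-move forces two, and then arguing that these contributions are disjoint, which is precisely the delicate combinatorics your proposal elides.

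The paper takes a different and cleaner route: it argues by induction on $(g,b)$. The upper bound $d(P,P')\leq g-k+b-c$ (via the generalized Waldhausen theorem, as you say) already implies that at least $2g+k+b+c-3 > 0$ curves of $P$ are fixed in $P'$, so a common curve $x$ exists. One then cuts $\Sigma$ along $x$, caps the new boundary with punctured disks, and invokes the inductive hypothesis on the resulting smaller surface(s), with cases according to whether $x$ separates and whether it bounds cut or compressing disks. This sidesteps the need for a global ``standardization lemma'' for c-reducing curves (which is itself a nontrivial assertion you would have to prove) and delivers properties (1)--(3) by tracking the cut-and-paste through the induction. If you want to pursue a non-inductive approach, the Lipschitz-function idea is salvageable in spirit, but you would need to replace $3k+2c-3$ with the correct extremal value and supply an actual proof that the geodesic cannot do better than one unit per edge without wasting moves --- this is essentially what the inductive argument accomplishes implicitly.
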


\begin{proof}
We will proceed by induction on the pair $(g,b)$ using the dictionary order on $\mathbb{Z}_{\geq 0} \times \mathbb{Z}_{\geq 0}$, but first provide a broad overview of the proof strategy. 

\begin{framed}
\textbf{Step 1}: We prove the lemma for the base case $(g,b)=(1,1)$. 

\textbf{Step 2}: After curve counting, we conclude that there exist curves which stay fixed as we travel from one disk set to another. Such a curve bounds disks in two handlebodies simultaneously, which means that we get a sphere coming from two such disks. 

\textbf{Step 3}: This sphere allows us to simplify the (3-manifold, link)-pair into new (3-manifold, link)-pairs with lower complexity. This allows us to use the inductive hypothesis across several cases, which are outlined in \autoref{fig:lemma_overview}. 
\end{framed}

\textbf{Step 1}:
Recall that $\Sigma$ is a genus $g$ surface with $2b$ punctures, so $\Sigma$ has $3g+2b-3$ curves in any pants decomposition. In particular, there are no pants for the cases $(g,b) = (0,0), (0,1), (1,0)$. Furthermore, the cases where $g=0$ have been addressed in \cite[Lemma~5.6]{blair2020kirby} (see also \cite[Lemma~3.4]{aranda2022bounds}), so the lemma holds for $(g,b)<(1,1)$.

However for the sake of illustration, and to include a non-trivial case with genus in the base case, we also give a proof for the case $(g,b)=(1,1)$. In this case $\Sigma$ is a twice-punctured torus, which requires two curves for a pants decomposition, and $g=1$, $b=1$, $c=1$, and $k=0$ or $1$, since both $Y_0$ and $Y_1$ admit genus one Heegaard splittings. By a generalization of Waldhausen's theorem (see \cite[Theorem~2.2]{zupan2013bridge}), there exists a standard diagram of $(Y_k, U_c)$ such that \[d(P,P')\leq g-k+b-c = 1-k+1-1 = 1-k.\] If $k=1$, then $d(P,P')=0$ and we are done. If $k=0$, then $d(P,P') \leq 1$. We claim additionally that $d(P,P') \geq 1$, as any genus one Heegaard splitting of $S^3$ consists of two dual curves which intersect once. Thus $d(P,P')=1$.

Next we turn to parts (\ref{lem:part_1}), (\ref{lem:part_2}), and (\ref{lem:part_3}) for the case $k=0$. (For $k=1$ these parts are trivially satisfied.) 
We classify the possible pants decompositions of $(g,b)=(1,1)$ into two types: one where the two punctures are in the same pants, and one where they are in different pants. See \autoref{fig:lemma_base_case} for examples of each type. Recall that the curves we are considering each bound a \textit{c-disk}, which is either a \textit{compressing disk} or \textit{cut disk} (see \autoref{subsec:complexes}).
Note that when the punctures are in the same pants, both curves bound compressing disks. When the punctures are in different pants, one curve bounds a cut disk and the other bounds a compressing disk.

\begin{figure}[h]
\centering
\includegraphics[width=13cm]{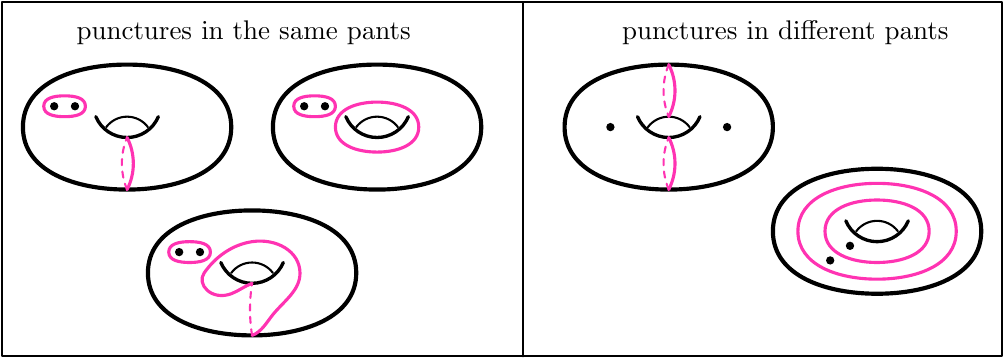}
\caption{Some examples of the two types of pants decompositions for the case $(g,b)=(1,1)$.}
\label{fig:lemma_base_case}
\end{figure}

When $k=0$, we know $d(P,P')=1$, so exactly once curve moves.
First assume both $P$ and $P'$ are pants decompositions with both punctures in the same pants. Then the curve which moves must be the non-separating curve. This move must be an $S$-move, since this curve must intersect the new curve exactly once in order for the curves to form a Heegaard diagram for $S^3$, and a quick check confirms that parts (\ref{lem:part_2}) and (\ref{lem:part_3}) hold as well.
Now assume (without loss of generality) that $P$ is a pants decomposition with punctures in the same pants, and $P'$ is a pants decomposition with punctures in different pants. In order for this to happen, the curve which moves must be the separating curve, and it must move to a non-separating curve, which is parallel to the other curve in $P$. However this produces a Heegaard diagram for $S^1 \times S^2$, and so this case cannot happen.
Finally, assume both $P$ and $P'$ are pants decompositions with punctures in different pants. Here, $P$ and $P'$ consist of two pairs of non-separating curves in $\Sigma$. Since $P$ and $P'$ differ by one edge, they must share a curve and additionally produce a Heegaard diagram for $S^1\times S^2$, but this is a contradiction, so this case also cannot happen. Thus the lemma holds for $(g,b)\leq(1,1)$.

\vspace{1em}
\textbf{Step 2}:
Now suppose that the lemma is true for any $(g_0,b_0)$-bridge splitting, where $(g_0,b_0) < (g,b)$. Again, by a generalization of Waldhausen's theorem (see \cite[Theorem~2.2]{zupan2013bridge}), there exists a standard diagram of $(Y_k, U_c)$ such that \[d(P,P')\leq g-k+b-c.\]
We will focus on proving equality, and prove parts (\ref{lem:part_1}), (\ref{lem:part_2}), and (\ref{lem:part_3}) along the way. By the above, at most $g-k+b-c$ curves in $P$ move, so at least \[(3g+2b-3)-(g-k+b-c)=2g+k+b+c-3\] curves stay fixed. Notice that $2g+k+b+c>3$ holds for $(g,b) \geq (1,1)$, so in particular $P$ and $P'$ have a common curve, which we call $x$. 
Recall that $x$ bounds either a cut disk or a compressing disk in each of $P$ and $P'$, and observe that $x$ bounds a cut disk in $P$ if and only if it bounds a cut disk in $P'$. If the sphere $S$ created by the union of the c-disks bounded by $x$ is separating in $Y_k$, then this is clear. If $S$ is non-separating, this follows since any unlink intersecting a sphere only once will not be nullhomotopic, which is a contradiction.

\vspace{1em}
\textbf{Step 3}:
The general strategy for the inductive step will be to cut along this common curve $x$, attach punctured disks to the newly created boundary components, and use induction on the new, smaller surface(s). A summary of this strategy is given in \autoref{fig:lemma_overview}, although some additional special cases within each subcase will need to be considered.

\begin{figure}[h]
\centering
\includegraphics[width=1\linewidth]{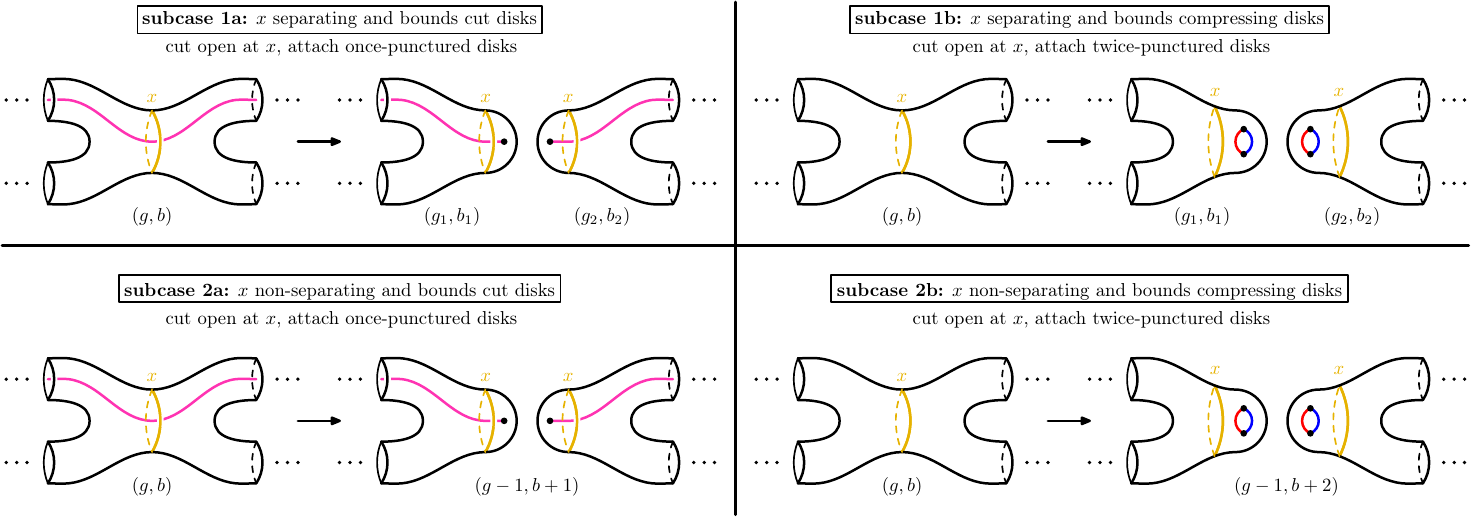}
\caption{A summary of the general strategy for the inductive step.}
\label{fig:lemma_overview}
\end{figure}

\textit{Case 1:} First assume $S$ is separating. Then $Y_k$ decomposes as a connected sum $Y_{k_1}\#Y_{k_2}$ along $S$, where $Y_{k_i}= \#^{k_i} S^1\times S^2$ and $k_1+k_2=k$, by the uniqueness of prime decompositions of $3$-manifolds. 

\indent \indent \textit{Subcase 1a:} If $x$ bounds cut disks, then each disk making up $S$ intersects a tangle once, so one of the unknots is split into two tangles across the sphere, which we can instead think of as two unknots connect summed together. So we have the decomposition \[(Y_k, U_c) = (Y_{k_1}\#Y_{k_2}, U_{c_1}\# U_{c_2}),\]  where the $U_{c_i}$ are $c_i$-component unknots and $c_1+c_2-1=c$. 

Let $\Sigma_1$, $\Sigma_2$ be the new Heegaard surfaces created by adding a punctured disk to each component of $\overline{\Sigma-x}$. Let $\Sigma_1=(g_1,b_1)$ and $\Sigma_2=(g_2,b_2)$, where $g_1+g_2=g$ and $b_1+b_2-1=b$. Note that because $x$ bounds cut disks, we have $b_i >0$. Identify $x$ with the punctures in the new disks added to create $\Sigma_1$ and $\Sigma_2$. (See \autoref{fig:lemma_overview}.) The curves in $P-\{x\}$ and $P'-\{x\}$ induce multicurves $P_i$ and $P_i'$ in $\Sigma_i$, for $i=1,2$. If both $\Sigma_1$ and $\Sigma_2$ are admissible, then $P_i$ and $P_i'$ are pants decompositions bounding c-disks in their respective tangles, and moves on $\Sigma_i$ between $P_i$ and $P_i'$ correspond to moves on $\Sigma$ between $P$ and $P'$ (fixing the curves on the other component of $\overline{\Sigma-x}$). In particular, because $x$ does not move, we can express the distance between $P$ and $P'$ as the sum of the distances between $P_i$ and $P_i'$ on each piece: \[d(P,P') = d(P_1,P_1')+d(P_2,P_2').\] 

Furthermore, if both $\Sigma_1$ and $\Sigma_2$ are admissible, then for each $i$ either $g_i<g$ or $b_i<b$. To see this, assume without loss of generality that $g_1=g$ and $b_1=b$ (note that because $b_i>0$ and $b_1+b_2-1=b$, this means $b_i$ can be at most $b$). This forces $g_2=0$ and $b_2=1$, which is not admissible. So by induction each pair $(P_i,P_i')$ satisfies the lemma. Therefore:
\begin{align*}
d(P,P') &=  d(P_1,P_1')+d(P_2,P_2')\\
&= g_1-k_1+b_1-c_1+g_2-k_2+b_2-c_2\\
&= g_1+g_2-k_1-k_2+b_1+b_2-c_1-c_2\\
&= g - k + (b+1) - (c+1)\\
&= g - k + b - c.
\end{align*}
For each $i$, let $\lambda_i$ be the geodesic connecting $P_i$ to $P_i'$ which satisfies parts (\ref{lem:part_1}), (\ref{lem:part_2}), and (\ref{lem:part_3}). The moves in $\lambda_1$ together with the moves in $\lambda_2$ create a geodesic $\lambda$ taking $P$ to $P'$, which consists of $(g_1-k_1)+(g_2-k_2)=(g-k)$ $S$-moves and $(b_1-c_1)+(b_2-c_2)=b+1-(c+1)=(b-c)$ $A$-moves, and again by induction, parts (\ref{lem:part_2}) and (\ref{lem:part_3}) hold as well. 

Now assume without loss of generality that only $\Sigma_1$ is admissible. As $\Sigma_2$ is not admissible and $x$ bounds cut disks, we know $(g_2,b_2) = (0,1)$. But this cannot happen, since we require curves bounding c-disks to be essential; in particular the curve cannot bound a once-punctured disk in $\Sigma$. Similarly, if both $\Sigma_1$ and $\Sigma_2$ are not admissible, then both $(g_i,b_i)=(0,1)$, which also cannot happen.

\indent \indent \textit{Subcase 1b:} If $x$ bounds compressing disks, then the unlink components do not intersect $S$, so we have the decomposition \[(Y_k, U_c) = (Y_{k_1}\#Y_{k_2}, U_{c_1}\sqcup U_{c_2}),\] where the $U_{c_i}$ are $c_i$-component unknots with $c_1 + c_2 = c$.

Let $\Sigma_1$, $\Sigma_2$ be the new Heegaard surfaces created by adding a disk with two punctures to each component of $\overline{\Sigma-x}$. Add to each side a new unlink made up of two trivial tangles, one in $H$ and one in $H'$, bounded by these punctures. Note that the number of unlink components on each side is now $c_i+1$. Let $\Sigma_1=(g_1,b_1)$ and $\Sigma_2=(g_2,b_2)$, where $g_1+g_2=g$ and $b_1+b_2-2=b$. (See \autoref{fig:lemma_overview}.) The curves in $P$ and $P'$ induce multicurves $P_i$ and $P_i'$ in $\Sigma_i$, for $i=1,2$. (Note that this time we are including $x$ in $P_i$ and $P_i'$.) If both $\Sigma_1$ and $\Sigma_2$ are admissible, then $P_i$ and $P_i'$ are pants decompositions bounding c-disks in their respective tangles, and moves on $\Sigma_i$ between $P_i$ and $P_i'$ correspond to moves on $\Sigma$ between $P$ and $P'$ (fixing the curves on the other component of $\overline{\Sigma-x}$). Because $x$ does not move, the newly added punctures do not affect the moves, so we can again express the distance between $P$ and $P'$ as the sum of the distances between $P_i$ and $P_i'$ on each piece: \[d(P,P') = d(P_1,P_1')+d(P_2,P_2').\] 

We claim that $\Sigma_1$ and $\Sigma_2$ will always be admissible. The only inadmissible surfaces are $(g,b) = (0,0), (0,1), (1,0)$. Since we added two punctures to each $\Sigma_i$, the cases $(0,0)$ and $(1,0)$ do not occur as possibilities for $\Sigma_i$. The case $(0,1)$ also cannot occur, as this would imply that $x$ bounds a disk in $\Sigma$. As both $\Sigma_1$ and $\Sigma_2$ are admissible, if both $g_i<g$, or if (without loss of generality) $g_1=g$ and $b_2>2$, then by induction each pair $(P_i,P_i')$ satisfies the lemma. Therefore:
\begin{align*}
d(P,P') &=  d(P_1,P_1')+d(P_2,P_2')\\
&= g_1-k_1+b_1-(c_1+1)+g_2-k_2+b_2-(c_2+1)\\
&= g_1+g_2-k_1-k_2+b_1+b_2-(c_1+1)-(c_2+1)\\
&= g - k + (b+2) - (c+2)\\
&= g - k + b - c.
\end{align*}
For each $i$, let $\lambda_i$ be the geodesic connecting $P_i$ to $P_i'$ which satisfies parts (\ref{lem:part_1}), (\ref{lem:part_2}), and (\ref{lem:part_3}). The moves in $\lambda_1$ together with the moves in $\lambda_2$ create a geodesic $\lambda$ taking $P$ to $P'$, which consists of $(g_1-k_1)+(g_2-k_2)=(g-k)$ $S$-moves and $(b_1-(c_1+1))+(b_2-(c_2+1))=b+2-(c+2)=(b-c)$ $A$-moves, and again by induction, parts (\ref{lem:part_2}) and (\ref{lem:part_3}) hold as well. 

Now assume without loss of generality that $g_1=g$ and $b_2=2$. This means $\Sigma_1=(g,b)$ and $\Sigma_2=(0,2)$, and thus we are stuck because we cannot use induction on the $\Sigma_1$ component, so now we will use a different approach. Recall that the $\Sigma_i$ were formed by adding disks with two punctures to each side, so before adding these disks we had surfaces $(g,b-1)$ and $(0,1)$, each with one boundary component. This time instead of adding a disk with two punctures to each component of $\overline{\Sigma-x}$, add a disk without punctures to create (abusing notation) $\Sigma_1=(g,b-1)$ and $\Sigma_2=(0,1)$, as in \autoref{fig:lemma_parallel1}. Note that now $k_1=k$, $k_2=0$, $c_1=c-1$, and $c_2=1$. There are two cases in which $\Sigma_1$ is not admissible, $\Sigma_1=(1,0)$ and $\Sigma_1=(0,1)$, but both were included in the base case. The former is part of the case $(g,b)=(1,1)$, and the latter was one of the cases addressed in \cite{blair2020kirby} (but also note that there are no possible moves, so the lemma holds trivially). So assume $\Sigma_1$ is admissible.

\begin{figure}[h]
\centering
\includegraphics[width=12cm]{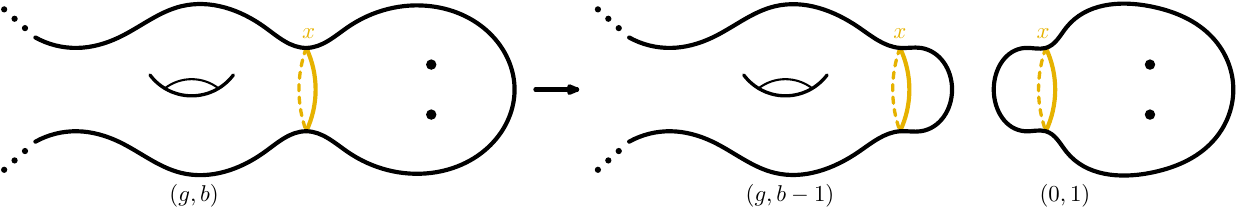}
\caption{The case where $x$ is separating, bounds compressing disks, and cuts off a $(0,1)$ surface on one side.}
\label{fig:lemma_parallel1}
\end{figure}

The curves in $P$ and $P'$ induce multicurves $P_1$ and $P_1'$ in $\Sigma_1$, which are almost pants decompositions, except the pants in $\Sigma$ formed by $x$ and the curves $y,z$ in $P_1$ (resp. $y',z'$ in $P_1'$) become an annulus between $y$ and $z$ (resp. $y'$ and $z'$). Here we consider punctures also to be curves, so in some cases the ``curves'' $y,z,y',z'$ may be punctures. Thus in $\Sigma_1$ the curves $y$ and $z$ (resp. $y'$ and $z'$) are now parallel, so identify them and call the resulting curve $w$ (resp. $w'$); see \autoref{fig:lemma_parallel2} for an example. (If a puncture and curve are being identified, think of the result as a puncture.) After this identification, we now have pants decompositions on $\Sigma_1$ coming from $P$ and $P'$, which we call again $P_1$ and $P_1'$. Although $\Sigma_2$ is not admissible, because $x$ does not move, and the two punctures in $\Sigma_2$ do not affect the moves in $\Sigma$, we still have that $d(P,P') = d(P_1,P_1')+d(P_2,P_2')$, and in this case $d(P_2,P_2')=0$. Thus by induction we have:
\begin{align*}
d(P,P') &=  d(P_1,P_1')\\
&= g_1-k_1+b_1-c_1\\
&= g - k + (b-1) - (c-1)\\
&= g-k+b-c.
\end{align*}
Let $\lambda$ be the geodesic connecting $P_1$ to $P_1'$ which satisfies parts (\ref{lem:part_1}), (\ref{lem:part_2}), and (\ref{lem:part_3}). If $w$ does not move in $\lambda$, that is, $w$ and $w'$ are the same, then the conclusions (\ref{lem:part_1}), (\ref{lem:part_2}), and (\ref{lem:part_3}) for $\Sigma$ follow immediately.

\begin{figure}[h]
\centering
\includegraphics[width=12cm]{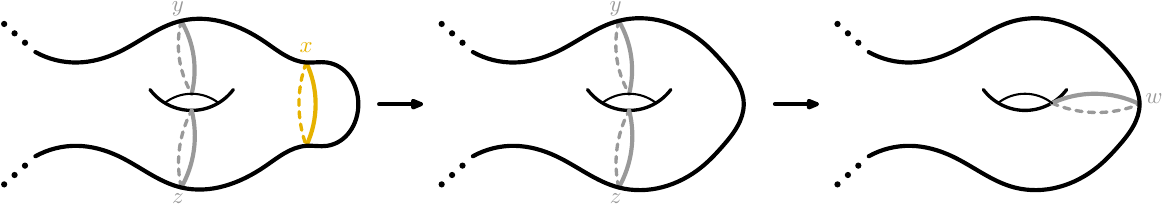}
\caption{Identifying the parallel curves $y$ and $z$.}
\label{fig:lemma_parallel2}
\end{figure}

Now suppose that $w$ does move in $\lambda$. By induction, we know $w$ moves exactly once and intersects exactly one other curve in $P_1'$ in either one or two points. Furthermore, the curve that $w$ intersects in $P_1'$ is $w'$ as they lie in the same four-holed sphere. This means that $y$ and $z$ each intersect each of $y'$ and $z'$ in either one or two points (depending on whether the move from $w$ to $w'$ is an S-move or an A-move). Thus it will take at least two moves to send $y$ to $y'$ and $z$ to $z'$. However, since all other curves in $\Sigma_1$ correspond to the same curves in $\Sigma$, and since $w$ is sent to $w'$ in one move, this means $d(P,P') > d(P_1,P_1')$, which is a contradiction. Thus $w$ does not move, and we are done with the case where $S$ is separating. 

\textit{Case 2:} Now we turn to the case where $S$ is non-separating, that is, $x$ is non-separating. Let $Q_{\pm}$ be the two pants in $\Sigma$ bounded by $x$ and other curves in $P$, keeping in mind that $Q_{\pm}$ could be the same pants.

\indent \indent \textit{Subcase 2a:} First assume $x$ bounds cut disks. Either $Q_+=Q_-$ or $Q_+ \neq Q_-$. If $Q_+=Q_-$, then the curve $x$ appears twice in the pants, so only one other curve in $P$ cobounds $Q_+=Q_-$ with $x$, which we denote by $y$. (Note that $y$ cannot be a puncture.) But since $x$ bounds a cut disk, there exist two (distinct) arcs in the handlebody which connect the disk bounded by $x$ to the disk bounded by $y$, so the disk bounded by $y$ must intersect a tangle twice, which contradicts the definition of c-disks.

So assume $Q_+ \neq Q_-$. Let $y_{\pm}$ and $z_{\pm}$ be the curves (or possibly punctures, which we think of here as curves) which cobound $Q_{\pm}$ (resp.) with $x$. Cut $\Sigma$ along $x$, and just as in Subcase 1a, fill in each newly created boundary component in $\Sigma$ with a disk with one puncture, and identify each copy of $x$ with the puncture. (See \autoref{fig:lemma_overview}.) Now we have a $(g-1,b+1)$ surface, which we denote by $\hat{\Sigma}$. Note that we have decreased the number of $S^1 \times S^2$ summands in $Y_k$ by one and increased the number of unlink components in $U_c$ by one. This new surface $\hat{\Sigma}$ must be admissible; the only way we could have created an inadmissible surface is if we started with $(1,0)$, which is itself inadmissible. The curves in $P-\{x\}$ and $P'-\{x\}$ induce multicurves $\hat{P}$ and $\hat{P'}$ in $\hat{\Sigma}$ which remain pants decompositions, and moves on $\hat{\Sigma}$ between $\hat{P}$ and $\hat{P'}$ correspond to moves on $\Sigma$ between $P$ and $P'$ (as $x$ is fixed). Thus by induction we have: 
\begin{align*}
d(P,P') &=  d(\hat{P},\hat{P'})\\
&= (g-1)-(k-1)+(b+1)-(c+1)\\
&= g-k+b-c.
\end{align*}
Let $\hat{\lambda}$ be the geodesic connecting $\hat{P}$ to $\hat{P'}$ which satisfies parts (\ref{lem:part_1}), (\ref{lem:part_2}), and (\ref{lem:part_3}). As $x$ does not move, $\hat{\lambda}$ induces a geodesic $\lambda$ connecting $P$ to $P'$ with the same properties.

\indent \indent \textit{Subcase 2b:} Finally assume $x$ bounds compressing disks. Either $Q_+=Q_-$ or $Q_+ \neq Q_-$. If $Q_+=Q_-$, then the curve $x$ appears twice in the pants, so only one other curve in $P$ cobounds $Q_+=Q_-$ with $x$, which we denote by $y$. (Note that $y$ cannot be a puncture.) The part of the surface containing $Q_+=Q_-$ must look the left side of \autoref{fig:lemma_subcase2b}. Cut $\Sigma$ along $x$, fill in each newly created boundary component in $\Sigma$ with a disk with one puncture, and identify each copy of $x$ with the puncture. Additionally add a new unlink made up of two trivial tangles, one in $H$ and one in $H'$, bounded by these punctures, so now the surface looks like the right side of \autoref{fig:lemma_subcase2b}. 

\begin{figure}[h]
\centering
\includegraphics[width=10cm]{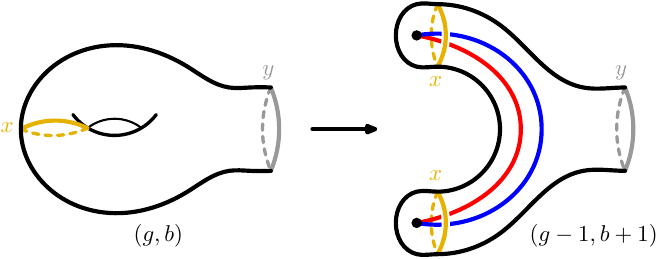}
\caption{The case where $x$ is non-separating, bounds compressing disks, and bounds the same pairs of pants.}
\label{fig:lemma_subcase2b}
\end{figure}

Now we have a $(g-1,b+1)$ surface, which we denote by $\hat{\Sigma}$. Note that we have decreased the number of $S^1 \times S^2$ summands in $Y_k$ by one and increased the number of unlink components in $U_c$ by one. As before, this new surface $\hat{\Sigma}$ must be admissible. The curves in $P-\{x\}$ and $P'-\{x\}$ induce multicurves $\hat{P}$ and $\hat{P'}$ in $\hat{\Sigma}$ which remain pants decompositions, and moves on $\hat{\Sigma}$ between $\hat{P}$ and $\hat{P'}$ correspond to moves on $\Sigma$ between $P$ and $P'$ (as $x$ is fixed). Thus by induction we have: 
\begin{align*}
d(P,P') &=  d(\hat{P},\hat{P'})\\
&= (g-1)-(k-1)+(b+1)-(c+1)\\
&= g-k+b-c.
\end{align*}
Let $\hat{\lambda}$ be the geodesic connecting $\hat{P}$ to $\hat{P'}$ which satisfies parts (\ref{lem:part_1}), (\ref{lem:part_2}), and (\ref{lem:part_3}). As $x$ does not move, $\hat{\lambda}$ induces a geodesic $\lambda$ connecting $P$ to $P'$ with the same properties.

If $Q_+ \neq Q_-$, let $y_{\pm}$ and $z_{\pm}$ be the curves (or possibly punctures, which we think of here as curves)  which cobound $Q_{\pm}$ (resp.) with $x$. Cut $\Sigma$ along $x$, and just as in Subcase 1b, fill in each newly created boundary component in $\Sigma$ with a disk with two punctures. Along with the punctured disk, add a new unlink made up of two trivial tangles, one in $H$ and one in $H'$, bounded by these punctures. (See \autoref{fig:lemma_overview}.) Now we have a $(g-1,b+2)$ surface, which we denote by $\hat{\Sigma}$. Note that we have decreased the number of $S^1 \times S^2$ summands in $Y_k$ by one and increased the number of unlink components in $U_c$ by two. As before, this new surface $\hat{\Sigma}$ must be admissible. The curves in $P$ and $P'$ induce multicurves $\hat{P}$ and $\hat{P'}$ in $\hat{\Sigma}$ which remain pants decompositions, and moves on $\hat{\Sigma}$ between $\hat{P}$ and $\hat{P'}$ correspond to moves on $\Sigma$ between $P$ and $P'$ (as $x$ is fixed). Thus by induction we have: 
\begin{align*}
d(P,P') &=  d(\hat{P},\hat{P'})\\
&= (g-1)-(k-1)+(b+2)-(c+2)\\
&= g-k+b-c.
\end{align*}
As before, see that parts (\ref{lem:part_1}), (\ref{lem:part_2}), and (\ref{lem:part_3}) are satisfied as well.
\end{proof}

We now present two additional lemmas detailing further properties of efficient pairs. 

\begin{lemma}\label{lem:Amoves}
Let $(H,T)$ and $(H',T')$ be two trivial tangles whose union is an unlink in bridge position embedded in $\#^k S^1\times S^2$. Let $(P,P')$ and $\lambda$ be as in \autoref{lem:efficient}. Suppose that $x\mapsto x'$ is an A-move in $\lambda$ for some $x\in P$ and $x'\in P'$. Then $x$ bounds embedded disks in both handlebodies $H$ and $H'$. Furthermore, $x$ does {\em not} bound a c-disk in $P'$.
\end{lemma}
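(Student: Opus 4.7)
The plan is to split the statement into two assertions: that $x$ bounds an embedded disk in each of $H$ and $H'$, and that $x$ does not bound a c-disk in $H'$. The first assertion in $H$ will be immediate from Part~(\ref{lem:part_3}) of \autoref{lem:efficient}, since $x$ moves along $\lambda$ and therefore bounds a compressing disk in $H$. For the disk in $H'$, I will locate $x$ using Part~(\ref{lem:part_2}): because the A-move gives $|x \cap x'| = 2$, that clause forces $x$ to be disjoint from every curve of $P' \setminus \{x'\}$. Since $x$ is essential and is not isotopic to any curve of $P'$ (it moves in $\lambda$, hence $x \notin P'$ as an isotopy class), $x$ must lie inside the four-holed sphere $F' \subset \Sigma \setminus (P' \setminus \{x'\})$ that contains $x'$. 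Each boundary component of $F'$ is either a curve of $P' \in \Dcal(H')$, which bounds a c-disk in $H'$, or a puncture of $\Sigma$, whose meridian bounds a cut disk in $H'$. In either case the generators of $\pi_1(F')$ die in $\pi_1(H')$, and since $[x] \in \pi_1(F')$ is a product of these generators, $x$ is null-homotopic in $H'$; Dehn's lemma for handlebodies then produces an embedded disk in $H'$ with boundary $x$.

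For the second assertion, I plan to argue by contradiction: assume $x$ bounds a c-disk in $H'$, and set $P'' = (P' \setminus \{x'\}) \cup \{x\}$. Since $P' \setminus \{x'\}$ bounds c-disks in $H'$ and $x$ does by assumption, $P'' \in \Dcal(H')$, and $P''$ is a valid pants decomposition because $x$ is disjoint from $P' \setminus \{x'\}$. My goal will be to exhibit a path of length $d(\Dcal(H), \Dcal(H')) - 1$ from $P$ to $P''$, contradicting the efficiency of $(P, P')$. Writing $\lambda = Q_0, \ldots, Q_m$ with the A-move $x \to x'$ at step $i \to i+1$, I set $Q_j'' = (Q_j \setminus \{x'\}) \cup \{x\}$ for $j \geq i+1$, noting that $Q_{i+1}'' = Q_i$ and $Q_m'' = P''$, so the sequence $Q_0, \ldots, Q_i, Q_{i+2}'', \ldots, Q_m''$ has length $m - 1$ and terminates at $P''$. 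To verify that each modified step $Q_j'' \to Q_{j+1}''$ is a valid move of the same A- or S-type as its counterpart $Q_j \to Q_{j+1}$, I must check that swapping $x'$ for $x$ in $Q_j$ preserves disjointness, which reduces to showing each curve of $Q_j \setminus \{x'\}$ is disjoint from $x$. Curves persisting from $P$ are disjoint from $x \in P$ by virtue of the pants decomposition $P$; curves in $P' \setminus P$ already introduced by prior moves are disjoint from $x$ by the symmetric form of Part~(\ref{lem:part_2}) applied to $P'$, which forces each such curve to intersect at most one curve of $P$, namely its direct pre-move counterpart (which is not $x$).

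The main obstacle will be justifying that no \emph{intermediate} curves (curves in neither $P$ nor $P'$) appear in the intermediate pants decompositions $Q_j$ of the geodesic $\lambda$ from \autoref{lem:efficient}. This is not explicitly recorded in the lemma but follows from its inductive construction together with the efficiency of $\lambda$: Part~(\ref{lem:part_2}) matches each curve of $P \setminus P'$ to a unique counterpart in $P' \setminus P$, and the $m = d(P, P')$ moves of $\lambda$ must realize this matching directly, with no wasted steps. With this in hand, both assertions of the lemma follow.
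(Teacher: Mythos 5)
Your proposal is correct and follows essentially the same strategy as the paper: localize $x$ inside the four-holed sphere of $P'\setminus\{x'\}$ (the paper does this by reordering the commuting moves so that $x\mapsto x'$ is last, you do it directly via Part~(\ref{lem:part_2})(a)), observe that the boundary curves bound disks in $H'$ so $x$ does too, and for the second assertion argue that $x$ bounding a c-disk in $T'$ would make the move $x\mapsto x'$ redundant, contradicting efficiency. The paper compresses the redundancy argument into one sentence, whereas you spell out the construction of the shorter path; the underlying idea is identical.
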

\begin{proof}
Suppose $x\mapsto x'$ is an A-move along the geodesic $\lambda$ in \autoref{lem:efficient}. By property (\ref{lem:part_3}) of \autoref{lem:efficient}, the edges in $\lambda$ commute so we can make $x\mapsto x'$ the last move. Focus on the 4-holed sphere where the A-move occurs, the boundaries of this bound compressing disks for $H'$. They separate a $3$-ball from $H'$ and $x$ is a simple closed curve in its boundary. In particular, $x$ bounds a disk in $H'$. Hence, $x$ bounds a compressing disk in both handlebodies $H$ and $H'$. If $x$ bounds a c-disk in $P'$, then it bounds a c-reducing disk for $T\cup \overline{T'}$. In particular, the move $x\mapsto x'$ is redundant. 
\end{proof}

\begin{lemma}\label{lem:cut}
Let $(P,P')$ be an efficient defining pair as in  \autoref{lem:efficient} 
with $(g,b)\neq (0,2)$. If $b>c$, then there is a cut curve $x\in P\cap P'$. 
Moreover, the number of cut curves in $P\cap P'$ is at least $(b-c)$ if $g>0$ and $c>1$, or $(b-2)$ otherwise.
\end{lemma}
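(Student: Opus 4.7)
The plan is to induct on $(g, b)$ in the dictionary order, paralleling the case analysis in the proof of \autoref{lem:efficient}. I first record a preliminary: the sphere argument used there shows that for every $x \in P \cap P'$, $x$ bounds a cut disk in $(H,T)$ iff it bounds one in $(H',T')$, so ``cut curve in $P \cap P'$'' is unambiguous. Combined with part (\ref{lem:part_3}) of \autoref{lem:efficient} (only compressing curves move), every cut curve of $P$ automatically lies in $P \cap P'$, so it suffices to lower-bound the number of cut curves in $P$.

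For the base cases I verify the smallest admissible configurations with $b > c$ by direct inspection. The key small case is $(g,b,c,k) = (0,3,1,0)$: here \autoref{lem:efficient} forces a common curve, and one checks that on $\Sigma_{0,6}$ there are no non-trivial common compressing curves (the equivalence classes generated by the $T$- and $T'$-pairings lump all six punctures together), so the common curve must be cut and realizes $b - 2 = 1$. For the inductive step I take a common curve $x \in P \cap P'$ (which exists since $|P \cap P'| = 2g + k + b + c - 3 \geq 1$ in the admissible range) and follow the four sub-cases of the proof of \autoref{lem:efficient} ($x$ separating or not; cut or compressing). The cut-and-fill-in reduction produces smaller efficient pairs $(\hat P_i, \hat P'_i)$ of complexity $(g_i, b_i, c_i, k_i)$ with the standard parameter shifts ($b_1 + b_2 = b + 1$ and $c_1 + c_2 = c + 1$ when $x$ is a separating cut curve; $b_1 + b_2 = b + 2$ and $c_1 + c_2 = c + 2$ in the separating compressing case; and analogous shifts for non-separating $x$). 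Applying the inductive hypothesis to each piece and adding $1$ when $x$ itself is a cut curve gives the total count, and an arithmetic check confirms the claimed lower bound.

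The main obstacle I anticipate is the bookkeeping in cases where a smaller piece lands in the ``special regime'' $g_i = 0$ or $c_i = 1$, so the induction only yields the weaker $b_i - 2$ bound rather than $b_i - c_i$, and one must verify the sum still achieves $b - c$ or $b - 2$ as required for the parent. Additional care is required when a sub-piece is degenerate with $(g_i, b_i) \in \{(0,1),(0,2)\}$ (which are excluded from the hypothesis): as in the proof of \autoref{lem:efficient}, these are handled by a parallel-curve reduction rather than by applying the induction. Finally, the first assertion in the borderline case $b = 2$, $c = 1$, $g \geq 1$ (where the quantitative bound degenerates to $\geq 0$) needs a short separate argument: by inspection of the common-compressing-curve structure on $\Sigma_{g,4}$, $P \cap P'$ cannot consist entirely of compressing curves when $b > c$, forcing at least one cut curve.
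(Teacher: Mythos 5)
Your approach — an induction on $(g,b)$ that mirrors the cut-and-cap reductions in the proof of \autoref{lem:efficient} — is genuinely different from the paper's. The paper argues directly: it isolates the $(b-c)$ four-holed spheres $E_i$ supporting the A-moves in a geodesic from $P$ to $P'$ (these have pairwise disjoint interiors by the commutativity in \autoref{lem:efficient}), analyzes the boundary circles of each $E_i$ to show they all bound (possibly inessential) cut disks — so unless $(g,b)=(0,2)$ at least one is an essential cut curve, giving the first assertion — and then obtains the quantitative bound by counting edges and outer boundary circles of the dual graph $\Gamma$ of $F=\cup_i E_i$ via an Euler-characteristic estimate. The virtue of that route is that the cut curves emerge visibly as boundaries of A-move regions, and the arithmetic closes in one place.

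Your proposal leaves the crux of the argument unverified, so as written it has a gap rather than a complete alternative proof. The ``bookkeeping'' you flag as your ``main obstacle'' is real and nontrivial: for pieces that land in the special regime ($g_i=0$ or $c_i'\leq 1$) the induction only yields $b_i-2$, and the sums only close because of the constraint $c_i'\leq b_i$ (every component of the reduced unlink needs at least one bridge) together with the observation that $c_i'=1$ forces $b_i=1$; none of this is checked in the proposal. Separately, the transfer step — that cut curves of $P_i\cap P_i'$ on the capped-off pieces correspond to cut curves of $P\cap P'$ on $\Sigma$, including through the parallel-curve reduction used in \autoref{lem:efficient} when a piece degenerates to $(0,1)$ or $(0,2)$ — is asserted but needs an argument, since a c-disk for a curve on the piece a priori lives in $H_i^{\mathrm{new}}$ rather than in $H_i^{\mathrm{cut}}\subset H$ and could meet the newly added arc. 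Finally, the ``short separate argument'' for the first assertion in the borderline case $(b,c)=(2,1)$, $g\geq 1$ is only gestured at. The paper's dual-graph count is precisely the device that sidesteps all of these issues; if you want to salvage the inductive route you would need to carry out the transfer and special-regime arithmetic explicitly.
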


\begin{proof}
Part (1) of \autoref{lem:efficient} states that there are exactly $(g-k)$ S-moves and $(b-c)$ A-moves between $P$ and $P'$. From Part (2) of the same result, these move commute. This means that the 4-holed spheres where the A- and S-moves occur have disjoint interiors. In particular, the boundaries of such subsurfaces of $\Sigma$ are fixed between $P$ and $P'$. 

Let $x\mapsto x'$ be one such A-move and let $E$ be the 4-holed sphere supporting the move. Denote by $\{\partial_1, \partial_2, \partial_3, \partial_4\}$ the boundaries of $E$. We can relabel the boundaries so that $x$ bounds a pair of pants with boundaries $\{x,\partial_1,\partial_2\}$ and $x'$ bounds a pair of pants with boundaries $\{x',\partial_1,\partial_3\}$. 
By the previous paragraph, each $\partial_i$ bounds a (possibly inessential) c-disk in both $T$ and $T'$. 
Moreover, since $x$ bounds a compressing disk for $T$ (\autoref{lem:efficient}(3)), we know that $\partial_1$ and $\partial_2$ must bound the same kind of disk. 
If $\partial_1$ and $\partial_2$ bound compressing disks, then $x$ will be forced to bound a compressing disk for both $T$ and $T'$. This will contradict \autoref{lem:Amoves}. Similarly, $\partial_3$ and $\partial_4$ cannot bound compressing disks simultaneously. Therefore, all $\partial_i$ must bound a (possibly inessential) cut disk in both $T$ and $T'$. However, if all $\partial_i$ are inessential, then the pants decomposition consists of two pairs of pants, where each pair of pants has $x$ and two of the $\partial_i$'s as boundaries. As $(g,b)\neq (0,2)$, we conclude that 
at least one boundary of $E$ must be an essential cut curve in $P\cap P'$. 

We now explain the lower bound for the number of essential cut curves in $P\cap P'$. 
Let $\mc{E}=\{E_1,\dots, E_{b-c}\}$ be the collection of 4-holed spheres supporting the $(b-c)$ A-moves between $P$ and $P'$. Define $F\subset \Sigma$ to be the surface obtained by gluing neighboring $E_i$ along their boundaries. Observe that $F$ may not be connected. Let $\Gamma$ be the dual graph of the cover of  $F=\cup_i E_i$. In other words, $\Gamma$ has vertex set $\mc{E}$ and edges corresponding to the circle components of $E\cap E'$ for distinct vertices $E,E'\in \mc{E}$. 
Denote by $m\geq 1$ the number of connected components of $F$. Since $\Gamma$ has $m$ connected components, the Euler characteristic of $\Gamma$ (vertices minus edges) is at most $m$. As $|\mc{E}|=b-c$, we obtain that $\Gamma$ has at least $b-c-m$ edges. And by definition of an edge, we conclude that the number of essential curves in $P\cap P'$ of the form $E\cap E'$ for some $E,E'\in \mc{E}$ is at least $b-c-m$.  

Let $G$ be the complement of $F$ in $\Sigma$. 
Suppose that the number of connected components $|F|+|G|$ of $F$ and $G$ is at least two; that is, either $F$ is disconnected or $G$ is non-empty. Then every connected component of $F$ will contain a boundary circle that is essential in $\Sigma$. This is true as $F$ and $G$ are complementary subsurfaces of a connected surface $\Sigma$. By definition of $F$, such boundary circle will belong to a unique 4-holed sphere $E\in \mc{E}$, so it is a cut curve in $P\cap P'$ that has not been counted in the previous paragraph. Hence, the number of essential cut curves in $P\cap P'$ is at least $b-c$ whenever $|F|+|G|\geq 2$. 

In order to finish the lemma, it is enough to discuss the case $|F|+|G|=1$. 
Here, $G$ is empty, $F$ is equal to $\Sigma$, and $m=1$. Thus, all essential curves in $P\cap P'$ are cut and correspond to all the edges of $\Gamma$. 
Now, since $\Gamma$ is the dual graph of a cover for $\Sigma = \cup_i E_i$, the Euler characteristic of $\Gamma$ is $1-g$. On the other hand, by definition, the Euler characteristic of $\Gamma$ is equal to $|\mc{E}|-|P\cap P'|$ which yields the equation 
$|P\cap P'|=b+g-c-1$. One can check that this number is at least $b-c$ for $g>0$ or $c>1$, and $b-2$ else. 
\end{proof}

\section{Lower bounds for \texorpdfstring{$\Lcal_n^*(\Tcal)$}{L(T)}} \label{sec:lower_bounds}

Lower bounds for $\Lcal_n^*(\Tcal)$ are difficult to calculate in general, but in this section we make some inroads towards this goal, categorized into two subsections.
In \autoref{sec:ogawa} we study multisections with small $\Lcal_n^*(\Tcal)$-invariant and prove \autoref{thm:ogawaV1} and \autoref{thm:ogawaV2a}, which are the main technical inputs for the proofs of \autoref{thm:leq1} and \autoref{thm:leq2}, respectively. In \autoref{sec:irred_multisections}, we give lower bounds for c-irreducible multisections in terms of the parameters $(g,k;b,c)$, and in particular prove \autoref{thm:lower_bound_1_V2}, which is the main technical input for the proof of \autoref{thm:gb}.
Our characterizations throughout will use the terminology from \autoref{subsec:new_from_old}. 

\subsection{Multisections with small \texorpdfstring{$\Lcal_n^*$}{L*}-invariant}\label{sec:ogawa}

In this section, we consider multisections with $\Lcal_n^*$-invariant at most two. 
The main idea is that the condition $\Lcal_n^*\leq 2$ forces $\Tcal$ to have c-reducing curves. Moreover, the multisections obtained by c-reducing $\Tcal$ also have small $\Lcal_n^*$-invariant. 
\autoref{thm:ogawaV1} shows that the pairs $(X,F)$ having $\Lcal_n^*\leq 1$ are simple. 
When $\Lcal_n^*(\Tcal)=2$, \autoref{thm:ogawaV2a} shows that we can c-reduce $\Tcal$ to a collection of multisections $(g,b)=(0,3),(1,1),(2,0)$. 
In light of this,
the authors of this work are interested in understanding multisections with small $(g,b)$-complexity.
\begin{problem}
List all pairs $(X,F)$ admitting a multisection with $(g,b)\in \{(0,3)$, $(1,1)$, $(2,0)\}$ and $\Lcal_n^*=2$.
\end{problem}

\subsubsection{Multisections with \texorpdfstring{$\Lcal_n^* \leq 1$}{L* leq 1}}

First we classify all pairs $(X,F)$ with $\Lcal_n^*$-invariant less than one. The following theorem is the main ingredient in the proof of \autoref{thm:leq1} (see \autoref{subsec:detection}).

\begin{theorem}
\label{thm:ogawaV1}
Let $\mathcal{T}$ be a multisection with $\mathcal{L}^*_n(\mathcal{T})\leq 1$. Then $\Tcal$ is completely decomposable.
\end{theorem}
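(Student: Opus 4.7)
The plan is to argue by strong induction on the complexity $(g, b)$ in lexicographic order. The base cases $(g, b) \in \{(0, 1), (0, 2), (1, 0)\}$ are completely decomposable by definition. For the inductive step it suffices to produce a c-reducing curve $x$ for $\Tcal$: c-reducing along $x$ yields one or two multisections of strictly smaller complexity, each inheriting $\Lcal^* \leq 1$ because restricting the efficient pairs to the c-reduced pieces can only shrink the relevant distances, so induction applies.

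To build a c-reducing curve when $\Lcal^*(\Tcal) = 0$, use that $P'_i = P_{i+1}$ gives a cyclic sequence $P_1, \ldots, P_n$ with consecutive transitions being efficient geodesics of length $(g - k_{i+1}) + (b - c_{i+1})$ by \autoref{lem:efficient}. Because these moves commute and occupy disjoint four-holed spheres (part (\ref{lem:part_2}) of \autoref{lem:efficient}), and because \autoref{lem:cut} forces several cut curves to remain fixed at each transition, a pigeonhole argument around the loop should locate a curve $x$ lying in every $P_i$. Such $x$ bounds a c-disk in each $(H_i, T_i)$ and is therefore c-reducing.

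For $\Lcal^*(\Tcal) = 1$, exactly one index $j$ contributes an edge $P'_j \to P_{j+1}$, while all other junctions satisfy $P'_i = P_{i+1}$. The plan is first to try to absorb this lone edge into the efficient geodesic of an adjacent pair, shifting its endpoint by one commuting move to produce a new efficient pair with $P'_j = P_{j+1}$, thereby reducing to the $\Lcal^* = 0$ case. When this absorption is obstructed, the obstructing curve necessarily bounds compressing disks in every handlebody except one, where it bounds a disk meeting the tangle in two points on distinct components; \autoref{lem:(12)red_implies_cred} then produces a c-reducing curve. Whenever the accounting instead exhibits two curves with a single geometric intersection, each bounding a compressing disk in every tangle, \autoref{lem:stabilization} applies.

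The main obstacle I expect is the pigeonhole bookkeeping: a priori the total number of moves around the loop can exceed $3g + 2b - 3$, so a naive count fails. My plan is to exploit part (\ref{lem:part_2}) of \autoref{lem:efficient}, which forces movable curves in the same transition to lie in disjoint four-holed spheres, together with the lower bound on fixed cut curves from \autoref{lem:cut}, to concentrate potential motion onto a small collection of curves. This should expose a common fixed curve around the cycle, or failing that, yield a configuration triggering one of the two c-reducibility lemmas above.
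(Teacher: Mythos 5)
Your overall skeleton (induction on $(g,b)$, locate a c-reducing curve, c-reduce, recurse) matches the paper, but the mechanism you use to find the fixed curve has several gaps.

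First, the paper's argument is not a pigeonhole. It observes directly that if $x$ bounds a cut disk in some $(H_i,T_i)$, or if $x$ is the boundary of a once-holed torus where an $S$-move occurs, then $x$ is fixed along every efficient geodesic it meets (only compressing curves move, and they move inside disjoint supports -- four-holed spheres for $A$-moves, once-holed tori for $S$-moves). Since $P'_i = P_{i+1}$ at every junction except at most one, such an $x$ propagates around the cycle by transitivity; the one remaining edge can change only one curve, and if it changed $x$ the loop $\lambda$ could not close. No counting argument is required, and no counting argument would succeed in general, since (as you note yourself) the total number of moves around the loop need not be bounded by $3g+2b-3$. You gesture at this difficulty but do not resolve it; the paper sidesteps it entirely.

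Second, you only build fixed curves out of cut curves from \autoref{lem:cut}. That lemma is silent when $b=c_i$ for all $i$. In that situation, if the multisection is not already trivial, there must be some $i$ with $g>k_i$; \autoref{lem:efficient} then produces an $S$-move, whose once-holed torus contributes a separating boundary curve that is fixed. Your proposal never invokes curves of this type, so it cannot handle multisections where every $H_{i}\cup H_{i+1}$ splitting has $b=c_i$ but positive $g-k_i$.

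Third, your handling of $\Lcal^*=1$ by ``absorbing'' the extra edge into an adjacent geodesic, then falling back to \autoref{lem:stabilization} or \autoref{lem:(12)red_implies_cred} when absorption fails, is not how the paper proceeds and is not substantiated. In particular, it is not clear that an obstructed absorption produces the hypotheses of \autoref{lem:(12)red_implies_cred} (a curve compressing in all tangles but one, with a prescribed two-point intersection in the last). The paper avoids the case split: the same fixed-curve argument handles $\Lcal^*\in\{0,1\}$ uniformly, and those two auxiliary lemmas are used later (for the genus-two quadrisection classification and the lower bound in \autoref{prop:no_reducing_curves_V2}), not here.

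Finally, you do not address the terminal case where neither cut curves nor once-holed-torus boundaries exist: when $g=k_i$ and $b=c_i$ for all $i$ simultaneously. The paper observes this forces $\Tcal$ to be completely decomposable already, closing the induction.
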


\begin{proof}
Let $\lambda$ be the loop in the dual curve complex of $\Sigma$ passing through all the disk sets $\Dcal_i=\Dcal(H_i,T_i)$ and realizing $\Lcal_n^*(\Tcal)$. Since $\Lcal_n^*(\Tcal)\leq 1$, $\lambda$ is a union of efficient defining pairs between $\Dcal_i$ and $\Dcal_{i+1}$ and at most one ``internal'' edge inside some $\Dcal_{i_0}$. In the following argument, we will find curves fixed along $\lambda$. Then we will ``compress'' $\Tcal$ along such curves and get new multisections with $\Lcal_n^*(\Tcal')\leq 1$ and $(g',b')<(g,b)$. The theorem will follow by induction on $(g,b)\geq (0,2)$; the base case $(0,2)$ is completely decompsable by definition.

Let $x$ be an essential curve in $\Sigma$ that separates a once-holed torus in $\Sigma$ or bounds a cut disk in some $\Dcal_i$. By \autoref{lem:efficient}, if $x$ belongs to a pants decomposition $P$ for some efficient defining pair $(P,Q)$, then $x$ also belongs to $Q$. Thus, since there is at most one edge that does not belong to a geodesic between an efficient defining pair, any such $x$ must be fixed along the loop $\lambda$. In particular, the multisection is c-reducible along $x$. If $x$ bounds a once-holed torus, $x$ must bound a compressing disk in all tangles. Then, we can cut $\Sigma$ along $x$ and cap off the two new boundary components with twice-punctured disks. This will produce two central surfaces for two multisections with complexities $(g_i,b_i)\leq (g,b)$, $i=1,2$. Given that $x$ is a reducing curve for $\Tcal$, curves bounding c-disks for a handlebody in some tangle of $\Tcal$ correspond to curves bounding c-disks in some tangle of one the multisections $\Tcal_1$ or $\Tcal_2$. Moreover, since $x$ is fixed along $\lambda$, $\Lcal_n^*(\Tcal_i)\leq 1$ for $i=1,2$. If both $(g'_i,b'_i)<(g,b)$ for both $i=1,2$, we can proceed by induction. If not, then $(g,b)=(1,1)$ and $\Tcal$ is the connected sum of a $(1,0)$-multisection and a $(0,1)$-bridge multisection; thus completely decomposable. 
A similar argument can be used if $x$ bounds a cut disk: here, we cut $\Sigma$ along $x$ and cap off the new boundary components with once-puncture disks. 

We end the proof of this theorem by explaining why curves $x$ as in the previous paragraph must exist. If $g>k_i$ for some $i$, by \autoref{lem:efficient}, there is an S-move in the efficient defining pair between $\Dcal_i$ and $\Dcal_{i+1}$. By definition, S-moves occur in a one-holed torus. We then choose $x$ to be the boundary. On the other hand, if $b<c_i$ for some $i$, by \autoref{lem:cut}, there is a cut curve $x$ in the efficient defining pair between $\Dcal_i$ and $\Dcal_{i+1}$. The remaining option is that $g=k_i$ and $b=c_i$ for all $i$ which, in turn, is completely decomposable. 
\end{proof}

\subsubsection{Multisections with \texorpdfstring{$\Lcal_n^*=2$}{L*=2}}

Our goal now is to classify all pairs $(X,F)$ with $\Lcal_n^*$-invariant equal to two. The following theorem is the main ingredient in the proof of \autoref{thm:leq2} (see \autoref{subsec:detection}).

\begin{theorem}\label{thm:ogawaV2a}
Let $\mathcal{T}$ be a multisection for $(X,F)$. If $\mathcal{L}^*_n(\mathcal{T})= 2$, then $\Tcal$ is the c-connected sum and self-tubing of bridge multisections $\mathcal{T}'$ with complexity \[(g,b)\in \{(0,1),(0,2),(0,3),(1,0),(1,1),(2,0)\}\] and $\Lcal_n^*(\Tcal')\leq 2$. 
\end{theorem}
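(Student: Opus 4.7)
The approach is to extend the inductive strategy of \autoref{thm:ogawaV1} to the setting $\Lcal^*(\Tcal) = 2$. I would let $\lambda$ be a loop in $\mc{C}^*(\Sigma)$ realizing $\Lcal^*(\Tcal) = 2$, so that $\lambda$ decomposes as a concatenation of $n$ efficient geodesics between consecutive disk sets $\Dcal_i, \Dcal_{i+1}$ together with exactly two internal edges, which either both lie in a common disk set $\Dcal_{i_0}$ or are split between two distinct disk sets. The plan is to induct on $(g,b)$ in the dictionary order, taking the six complexities in the stated list as base cases for which $\Tcal$ itself is the desired decomposition.

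For the inductive step I would assume $(g,b)$ lies outside the list and aim to produce a c-reducing curve $x \subset \Sigma$ — one bounding a c-disk in every tangle $(H_i, T_i)$ — then c-reduce $\Tcal$ along $x$ to obtain smaller multisections $\Tcal_1, \Tcal_2$ and apply the inductive hypothesis. Candidate c-reducing curves should arise from structural features of the efficient geodesics: by \autoref{lem:efficient}(1), whenever $g > k_i$ the geodesic from $\Dcal_i$ to $\Dcal_{i+1}$ contains an $S$-move, whose supporting once-holed torus yields a waist curve present in both endpoint pants decompositions; and by \autoref{lem:cut}, whenever $b > c_i$ there are at least $b - c_i$ (or $b - 2$) essential cut curves common to both endpoints. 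Such candidates persist along every efficient geodesic they enter, so the only way a candidate fails to appear in every boundary vertex $P_j$ of $\lambda$ is for an internal edge to remove it. When both internal edges lie in a single disk set, any such candidate remains in every boundary vertex and is therefore c-reducing. When the edges lie in distinct disk sets and a candidate $x$ may alternate with a partner $x'$ and back, I would argue by pigeonhole on the many candidates produced by $(g,b)$ exceeding the list, or apply \autoref{lem:stabilization} or \autoref{lem:(12)red_implies_cred} to upgrade an almost-reducing configuration to a genuine c-reducing curve.

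Once a c-reducing curve $x$ is in hand, c-reducing $\Tcal$ along $x$ yields one or two multisections $\Tcal_j$ of strictly smaller complexity, and because $x$ lies in every boundary vertex of $\lambda$, the projection of $\lambda$ to each new central surface has cost at most $2$, so $\Lcal^*(\Tcal_j) \leq 2$. The inductive hypothesis then expresses each $\Tcal_j$ as a c-connected sum and self-tubing of multisections from the list with $\Lcal^* \leq 2$, and concatenating these decompositions finishes the argument. The hard part will be the case analysis for complexities just outside the list — notably $(0,4)$, $(1,2)$, $(2,1)$, and $(3,0)$ — where candidate curves are relatively scarce and the adversarial placement of the two internal edges requires careful bookkeeping of which disk sets host them, leveraging the refined cut-curve count of \autoref{lem:cut} together with the auxiliary reducibility criteria \autoref{lem:stabilization} and \autoref{lem:(12)red_implies_cred}.
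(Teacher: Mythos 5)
Your proposal follows the paper's overall inductive strategy and correctly identifies the key structural observation: a candidate curve (one bounding a cut disk or cutting off a once-holed torus) is fixed along any efficient geodesic by \autoref{lem:efficient}, so if it fails to be a c-reducing curve it must be moved by both internal edges of $\lambda$; hence, when there are at least two such candidates, at least one must be fixed. This is exactly the pigeonhole that opens the paper's proof, and your intended induction on $(g,b)$ with base cases at the small complexities in the list is also what the paper does.

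However, there is a real gap in the execution. The pigeonhole alone does not carry the inductive step. In Case~2 of the paper (where $b>c_{i_0}$ for some $i_0$ but $g=0$), \autoref{lem:cut} only guarantees $b-2$ cut curves, which for $b=3$ is a single candidate; and when $g>k_{i_0}$ you are guaranteed just one waist curve from an S-move. So there are legitimate configurations with exactly one candidate, and you cannot conclude anything by counting. The paper's actual work happens in the detailed local analysis of the four-holed sphere supporting the internal move $x\mapsto y$: it establishes \autoref{Claim:y_is_cut} (that $y$ must itself be a cut curve, using \autoref{lem:Amoves} rather than the lemmas you cite) and \autoref{Claim:z2_moves} (that the $z_j$ boundaries can be assumed to bound twice-punctured disks), and uses these facts to either manufacture additional fixed curves from the $z_j$ and reduce, or to pin $(g,b)$ down to a base case such as $(0,3)$, $(1,1)$, or $(2,0)$. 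Your plan gestures at \autoref{lem:stabilization} and \autoref{lem:(12)red_implies_cred} to upgrade an almost-reducing configuration, but those lemmas are not what is needed here (they require a curve that already bounds compressing disks in all or all-but-one of the tangles, which your lone mobile cut curve does not provide), and neither is invoked in the paper's proof of this theorem. Without the local four-holed-sphere analysis and the two claims, the argument does not close the gap from ``one mobile candidate'' to either c-reducibility or the finite list of base complexities.

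A minor point: you assert that when both internal edges lie in a single disk set, every candidate is automatically fixed. This is true (a cut curve that moves once cannot reappear, since efficient geodesics never move cut curves, and moving twice in the same island would contradict the minimality of $\Lcal^*(\Tcal)=2$), but the paper does not need to separate this case; the uniform ``moves twice or is fixed'' observation covers it.
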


\begin{proof}
We begin with a broad overview of the proof, which is broken into three main cases based on the parameters $(g,k,b,c)$ of the multisection.
The theorem will then follow by induction on $(g,b)\geq (0,3)$; the base cases are established by assumption.

\begin{framed}
\textbf{Setup}: We suppose that the $\mathcal{L}^*_n$-invariant is precisely two so that two curves move; furthermore, because we are dealing with a \textit{loop} in the dual curve complex, we know that one curve has to move twice (rather than two different curves moving once). Call this curve $x$, and let $y,y'$ be the curves $x$ moves to: $x\mapsto y$ and $x\mapsto y'$.

\textbf{Case 1}: The case $g=k_i$ and $b=c_i$ for all $i$ is immediate.

\textbf{Case 2}: We then consider the case when there exists $i_0$ with $b>c_{i_0}$, that is, the number of arcs is more than the number of disks in some disk system. We analyze what the curves that are fixed around the elementary pants move $x\mapsto y$ can look like. More precisely, the move happens in a $4$-holed sphere with boundaries $z_1,z_2,z_3$ and $z_4$, so one can characterize these $z_i$'s. We will see in \autoref{Claim:y_is_cut} that $y$ must bound a cut disk in $T_1$. This property of $y$ is an ingredient in \autoref{Claim:z2_moves} to either conclude that we started with a $(0,3)$-multisection in the first place, or produce a curve that is fixed along the loop $\lambda$ realizing the $\mathcal{L}^*$-invariant. The latter possibility allows us to conclude that the multisection is a c-connected sum and self-tubing of multisections with lower complexities.

\textbf{Case 3}: The last case we have yet to consider is when there exists $i_0$ with $g>k_{i_0}$ and $b=c_i$ for all $i$. The difference in this case compared to the previous one is that the $4$-holed sphere with boundaries $z_1,z_2,z_3$ and $z_4$ has two of the curves (say $z_3$ and $z_4$) being parallel. \textbf{Subcase 3a} and \textbf{Subcase 3b} go through the possibilities of disks that $z_1$ and $z_2$ bound. From these analyses, we conclude that either there is a curve that is fixed along $\lambda$ allowing us to use induction, or we have started with \[(g,b)\in \{(0,1),(0,2),(0,3),(1,0),(1,1),(2,0)\}\] in the first place.
\end{framed}

Let $\lambda$ be the loop in the dual curve complex of $\Sigma$ passing through all the disk sets $\Dcal_i=\Dcal(H_i,T_i)$ and realizing $\Lcal_n^*(\Tcal)$. Since $\Lcal_n^*(\Tcal)=2$, $\lambda$ is a union of efficient defining pairs between $\Dcal_i$ and $\Dcal_{i+1}$ and two ``internal'' edges inside some disk sets $\Dcal_{i}$. Moreover, since $\lambda$ is a loop, some curve $x$ must move at least twice along $\lambda$ in two distinct disk sets $\Dcal_{i_0}$ and $\Dcal_{i_1}$. Given that $\Lcal_n^*(\Tcal)=2$, we conclude that at most one such $x$ can exist. 

Suppose that a curve $w$ is fixed along $\lambda$ and either separates a once-holed torus in $\Sigma$ or bounds a cut disk in some $\Dcal_i$. If this is the case, we can modify $\Sigma$ by cutting along $w$ as in the proof of \autoref{thm:ogawaV1} to get multisections of smaller complexity $(g,b)$ and induct. In what follows, we show that we can always find such a $w$ to cut along. 

\vspace{1em}
\textbf{Case 1}: For all $i$, $g=k_i$ and $b=c_i$. Here, all tangles are the same and the multisection is completely decomposable with $\Lcal_n^*(\Tcal)=0$. 

\vspace{1em}
\textbf{Case 2}: There exists $i_0$ with $b>c_{i_0}$. By \autoref{lem:cut}, there exists a cut curve $x$ in some efficient defining pair. As discussed above $x$ moves twice along $\lambda$. Without loss of generality, $x\mapsto y$ in $\Dcal_1$ and $x\mapsto y'$ in $\Dcal_i$ (see \autoref{fig:case2}). We focus on the 4-holed sphere, depicted in \autoref{fig:case2bc}(a), where the move $x\mapsto y$ occurs. Since $x$ bounds a cut disk for $T_1$, exactly one of $\{z_1,z_2\}$ (resp. $\{z_3,z_4\}$) bounds a cut disk for $T_1$ and the other curve compresses due to parity. If one of these cut curves is essential in $\Sigma$, it will be fixed along $\lambda$ and we can apply the inductive step. Thus, without loss of generality, we assume that $\{z_1,z_3\}$ bound once-punctured disks in $\Sigma$ and $\{z_2,z_4\}$ bound compressing disks in $T_1$. In particular, there is a shadow $s$ of $T_1$ passing through $z_1$ and $z_3$ once and disjoint from $z_2$ and $z_4$. 

\begin{figure}[h]
\centering
\includegraphics[width=7cm]{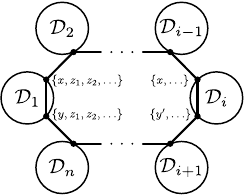}
\caption{In Case 2 of \autoref{thm:ogawaV2a}, the loop $\lambda$ realizing $\mathcal{L}^*_n$ intersects each disk set in a vertex except for $\mathcal{D}_1$ and $\mathcal{D}_i$, where the intersection is an edge representing the moves $x\mapsto y$ in $\Dcal_1$ and $x\mapsto y'$ in $\Dcal_i$, respectively.}
\label{fig:case2}
\end{figure}

In what follows, we will study the behavior of $y$, $z_2$, and $z_4$ along $\lambda$. These sub-paths are formed by efficient defining pairs so they only contain $A$-moves and $S$-moves. By construction, $z_2$ and $z_4$ cannot participate in $S$-moves as each of them cobound a once-punctured annulus with $x$.

\begin{figure}[h]
\centering
\includegraphics[width=8cm]{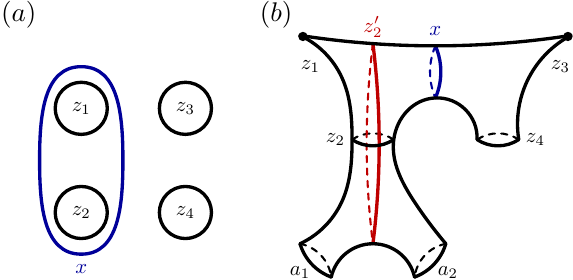}
\caption{(a) The T-shirt where the dual move $x\mapsto y$ in Case 2 of \autoref{thm:ogawaV2a} happens. (b) The local picture of the move $z_2\mapsto z_2'$ in the proof of \autoref{Claim:z2_moves}.}
\label{fig:case2bc}
\end{figure}

\begin{shaded}
\begin{claim}\label{Claim:y_is_cut}
The curve $y$ bounds a cut disk in $T_1$.
\end{claim}

\begin{proof}[Proof of \autoref{Claim:y_is_cut}]
Suppose $y$ bounds a compressing disk in $T_1$. 
Recall that $y$ lies in the 4-holed sphere in \autoref{fig:case2bc}(a). 
We have two possibilities: either $y$ separates the punctures $z_1$ and $z_3$ or not. If the former happens, then $y$ will always intersect the shadow $s$, forcing $y$ to bound a cut disk for $T_1$. Hence, $y$ cannot separate $z_1$ and $z_3$ and $\{y,z_2,z_4\}$ cobound a pair of pants. 

We will focus on the efficient defining pair between $\Dcal_1$ and $\Dcal_n$. Since moves in this efficient pair commute and $\{y,z_2,z_4\}$ cobound a pair of pants, at most one of these curves can participate in an $A$-move. In particular, two of the three curves will bound compressing disks in both tangles $T_1$ and $T_n$. This forces the third curve to also bound a reducing curve in both tangles $T_1$ and $T_n$. By the contrapositive of \autoref{lem:Amoves}, all three curves $\{y,z_2,z_4\}$ must be fixed between $\Dcal_1$ and $\Dcal_n$. By the same argument, we obtain that $\{y,z_2,z_4\}$ must be fixed along $\lambda$ between $\Dcal_i$ and $\Dcal_n$. Thus, $y$ and $y'$ must be equal and the moves $x\mapsto y$ and $x\mapsto y'$ is redundant: we can leave $\{x,z_2,z_4\}$ fixed between $\Dcal_i$ and $\Dcal_1$. This contradicts the assumption that $\Lcal_n^*(\Tcal)=2$. 
\end{proof}
\end{shaded}

From \autoref{Claim:y_is_cut}, $y$ is a cut curve for $T_1$ and so it is fixed in every efficient defining pair between $\Dcal_i$ and $\Dcal_1$. In particular, $y=y'$ bounds a cut disk in the tangles $T_i, T_{i+1}, \dots, T_n, T_1$. 

\begin{shaded}
\begin{claim}\label{Claim:z2_moves}
If $z_2$ (resp. $z_4$) moves in some efficient defining pair between $\Dcal_j$ and $\Dcal_{j+1}$, then we can assume that $z_2$ (resp. $z_4$) bounds a twice-punctured disk. 
\end{claim}

\begin{proof}[Proof of \autoref{Claim:z2_moves}]
Say $z_2\mapsto z'_2$ is an $A$-move between the disk sets $\Dcal_1$ and $\Dcal_2$. Since $b>c_{i_0}$, the curves $z_2$ and $z_4$ are distinct and the surface $\Sigma$ near $x$ can be depicted as in \autoref{fig:case2bc}(b). 
Here, the two new curves $a_1$ and $a_2$ bound c-disks in both tangles $T_1$ and $T_2$ as edges in an efficient defining pair commute by \autoref{lem:efficient}. We know that $z'_2$ bounds a compressing disk for $T_2$, so there is a shadow for $T_2$ starting at $z_1$ and disjoint from $z'_2$. Thus, $a_1$ must bound a cut disk for $T_2$ and so for $T_1$. By the same reason, replacing $z_1$ with $x$, $a_2$ must bound a cut disk in both tangles $T_1$ and $T_2$. If any of $\{a_1, a_2\}$ is essential in $\Sigma$, we proceed by induction. Hence, we can assume that both $\{a_1,a_2\}$ bound once-punctured disks and $z_2$ bounds a twice-punctured disk. 

We can repeat the argument above to prove \autoref{Claim:z2_moves} for $1\leq j <i$. By replacing $x$ with $y$ above, we can show the result for the case $i\leq j \leq n$.
\end{proof}
\end{shaded}

By \autoref{Claim:z2_moves}, if $z_2$ (or $z_4$) does not bound a twice-punctured disk, then it is fixed along $\lambda$. We can then fill $\Sigma-z_2$ (or $\Sigma-z_4$) with twice-punctured disks to get multisection(s) $\Tcal'$ with $\Lcal_n^*(\Tcal')\leq 2$ and complexities $(g',b')< (g,b)$. The proof of \autoref{thm:ogawaV2a} will follow by induction. 
It remains to discuss the case when both $z_2$ and $z_4$ bound twice-punctured disks. However, note that this is the base case $(g,b)=(0,3)$, so we are done.

\vspace{1em}
\textbf{Case 3}: There exists $i_0$ with $g>k_{i_0}$. By the work on Case 2, we can assume that $b=c_i$ for all $i$. By \autoref{lem:efficient}, each efficient defining pair in $\lambda$ has no A-moves. 
By the same result, the efficient defining pair between $\Dcal_{i_0}$ and $\Dcal_{i_0 + 1}$ has at least one S-move $a\mapsto b$ which, in turn, yields the existence of a curve $x$ in the same efficient defining pair that separates a once-holed torus in $\Sigma$. As discussed at the beginning of the proof, $x$ moves twice along $\lambda$. Without loss of generality, $x\mapsto y$ in $\Dcal_1$ and $x\mapsto y'$ in $\Dcal_i$ for some $i\neq 1$ (see \autoref{fig:case3}(a)). 
The move $x\mapsto y$ occurs in a four-holed sphere with boundaries equal to two copies of $a$ and two other curves $z_1$ and $z_2$ as in \autoref{fig:case3}(b). 
Recall that $x$ bounds a compressing disk for $T_1$ and $\{z_1,z_2\}$ bound c-disks in $T_1$. Thus, $z_1$ and $z_2$ bound the same kind of c-disk, cut or compressing, in $T_1$. 

\indent \indent \textbf{Subcase 3a}:  Suppose they both bound cut disks. Then they cannot be essential curves in $\Sigma$ since that would create a cut curve fixed along $\lambda$. Thus, we can assume that $z_1$ and $z_2$ bound once-punctured disks. In this case, $(g,b)=(1,1)$.

\begin{figure}[h]
\centering
\includegraphics[width=12cm]{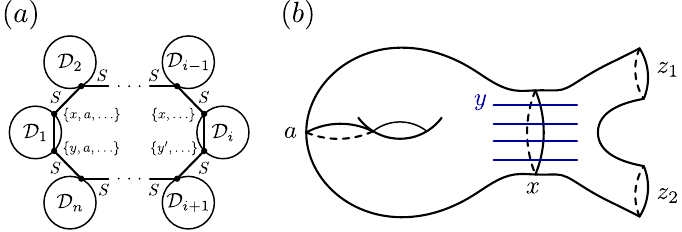}
\caption{Setup for Case 3.}
\label{fig:case3}
\end{figure}

\indent \indent \textbf{Subcase 3b}: Both $z_1$ and $z_2$ bound compressing disks in $T_1$. 
We now discuss the behavior of $y$ along $\lambda$. From \autoref{fig:case3}(b), 
we observe that $y$ should bound a compressing for $T_1$ as $a$, $z_1$, and $z_2$ do. 
Suppose first that $y$ is non-separating in $\Sigma$. Given that we are assuming that efficient defining pairs along $\lambda$ only have S-moves, the curves $\{a,y,z_1,z_2\}$ are forced to be fixed along $\lambda$ in the efficient defining pairs between $\{\Dcal_1,\Dcal_n\}$, $\{\Dcal_n, \Dcal_{n-1}\}$, $\dots$, $\{\Dcal_{i+1},\Dcal_i\}$. This is because none of these curves lie in a once-holed torus. In particular, $y=y'$ and the move $x\mapsto y$ in $\Dcal_1$ is unnecessary, contradicting the assumption that $\Lcal_n^*(\Tcal)=2$. Hence, $y$ must be separating in $\Sigma$. 
Moreover, $y$ (resp. $x$) cuts a once-holed torus in $\Sigma$ where $a$ lies, and the curve $y$ (resp. $x$) is fixed along every efficient defining pair in $\lambda$ between $\{\Dcal_j, \Dcal_{j+1}\}$ for $1\leq j<i$ (resp. $i\leq j\leq n$). In particular, $y=y'$.

If $z_1=z_2$, then $(g,b)=(2,0)$ as in the conclusion. So we may assume that $z_1\neq z_2$. 
In particular $\{z_1,z_2\}$ do not lie inside a once-holed torus in $\Sigma$ since they cobound a pair of pants with $x$. Thus, $\{z_1,z_2\}$ do not participate in S-moves and $\{z_1,z_2\}$ are fixed in the efficient defining pair between $\Dcal_j$ and $\Dcal_{j+1}$ for $1\leq j<i$. By replacing $x$ with $y$ we obtain the same conclusion for $i\leq j \leq n$. Hence, $\{z_1,z_2\}$ are fixed along $\lambda$. 
Here, we can cap off the boundaries of $\Sigma-(z_1\cup z_2)$ with twice-punctured disks and still get multisections $\Tcal'$ with $\Lcal_n^*(\Tcal')=2$ and complexities $(g',b')\leq (g,b)$. If the last inequality is strict, we can proceed by induction. The inequality is not strict only when $z_1$ and $z_2$ bound twice punctured disks; that is $(g,b)=(1,2)$. The multisection $\Tcal$ is the connected sum of multisections with complexities $(g,b)=(1,0),(0,1)$; hence completely decomposable. 
\end{proof}

\subsection{\texorpdfstring{$\Lcal_n^*$}{L*}-invariants of c-irreducible multisections}\label{sec:irred_multisections}

The goal of this section is to give a lower bound for c-irreducible multisections $\Tcal$ in terms of its invariants $(g,k;b,c)$. We give two bounds: one for multisections (\autoref{thm:lower_boundV1}) and an improvement for trisections (\autoref{thm:lower_bound_1}). We then rephrase the latter to obtain classical genus bounds for $\Lcal_3^*(\Tcal)$ (\autoref{thm:lower_bound_1_V2}) in the spirit of \cite[Theorem~6.4]{blair2020kirby}. 
The main idea for all lower bounds is that, for c-irreducible multisections, efficient defining pairs between consecutive disk sets do not have too many common curves. 

\begin{theorem}\label{thm:lower_boundV1}
Let $\Tcal$ be a $(g,k;b,c)$-multisection such that $(g,b)\neq (0,2),(1,0),(2,0)$. Let 
\[M=
\begin{cases}
2\max_i (g-k_i+b-c_i), & \text{if $g> 0$ or $c_i>1$ $\forall i$}\\
2(b-2), & \text{otherwise.}
\end{cases}
\]
If $\Tcal$ is c-irreducible, then $\Lcal_n^*(\Tcal)\geq M$.
\end{theorem}

\begin{proof}
Let $\lambda$ be a loop in the dual curve complex of $\Sigma$ passing through all the disk sets $\Dcal_i=\Dcal(H_i,T_i)$ and realizing $\Lcal_n^*(\Tcal)$. By definition, $\lambda$ is the union of efficient defining pairs between $\{\Dcal_i, \Dcal_{i+1}\}$ and $\Lcal_n^*(\Tcal)$-many extra edges connecting these pairs. In what follows, we will find a curve $x\subset \Sigma$ is fixed along $\lambda$, implying by definition, that $\Tcal$ is c-reducible. 

For each $1\leq i \leq n$, let $l_i$ be the number of curves $x$ in an efficient defining pair between $\{\Dcal_i, \Dcal_{i+1}\}$ bounding cut disks in $T_i$ or separating a once-holed torus from $\Sigma$. By \autoref{lem:efficient}, such curves do not move along efficient defining pairs. If one of these curves participates in at most one edge of $\lambda$, then it must be fixed along $\lambda$, and $\Tcal$ will be c-reducible. Therefore, if $\Lcal_n^*(\Tcal)<2l_i$ for some $i$, then $\Tcal$ is c-reducible. 

On the other hand, we can give a lower bound on the number of such curves $x$. 
Since $(g,b)\neq (2,0)$, \autoref{lem:efficient} implies that there are at least $g-k_i$ curves in the pair $\{\Dcal_i,\Dcal_{i+1}\}$ separating a once-holed torus from $\Sigma$. \autoref{lem:cut} states that the number of cut curves is at least $b-c_i$ if $g>0$ or $c_i>1$ $\forall i$, or $b-2$ otherwise. Hence, $2l_i\leq M$ for all $i$ and the result holds. 
\end{proof}

We know from \autoref{prop:(0,2)(1,0)_cases} that $\Lcal_n^*=0$ for multisection with complexities $(g,b)=(0,2), (1,0)$. The following propositions discuss the missing case $(g,b)=(2,0)$. 

\begin{proposition}
Let $\Tcal$ be a multisection with complexity $(g,b)=(2,0)$. If $\Tcal$ is irreducible, then $\Lcal_n^*(\Tcal)\geq 2$.
\end{proposition}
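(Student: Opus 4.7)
The plan is to deduce this proposition as a direct corollary of \autoref{thm:ogawaV1} via contrapositive. I would assume for contradiction that $\Lcal_n^*(\Tcal)\leq 1$ and then show that $\Tcal$ must be c-reducible, contradicting the hypothesis.

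First, I would invoke \autoref{thm:ogawaV1} to conclude that $\Tcal$ is completely decomposable, meaning it is built by iterated c-connected sums and self-tubings from basic multisections of complexities $(g,b)\in\{(0,1),(0,2),(1,0)\}$. I would then record two structural observations: since $b=0$, the embedded surface $F$ is empty, and so no self-tubing operation can appear in the decomposition (self-tubing requires a nonempty surface component, as explained in \autoref{subsec:new_from_old}); and since $(g,b)=(2,0)$ does not appear on the basic list, $\Tcal$ cannot itself be one of the basic building blocks. Together these force $\Tcal$ to be obtained from smaller completely decomposable multisections via at least one c-connected sum. Concretely, this must be a distant sum of two $(1,0)$-multisections, possibly followed by connected sums with $(S^1\times S^3,\emptyset)$.

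Finally, I would appeal to the description in \autoref{subsec:new_from_old}: every c-connected sum (distant or otherwise) produces a c-reducing curve on the central surface $\Sigma$, namely the boundary on $\Sigma$ of the $4$-ball used to perform the sum, which bounds a compressing disk in every handlebody $H_i$. This contradicts the assumption that $\Tcal$ is irreducible, so we must have $\Lcal_n^*(\Tcal)\geq 2$. I do not foresee a serious obstacle here beyond \autoref{thm:ogawaV1} itself: the argument is essentially bookkeeping once the relationship between ``completely decomposable'' and the existence of a c-reducing curve is unpacked from the definitions in \autoref{subsec:new_from_old}.
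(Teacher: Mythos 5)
Your proof is correct, and it takes a genuinely different route from the paper's. You deduce the result as a corollary of \autoref{thm:ogawaV1}: assuming $\Lcal_n^*(\Tcal)\leq 1$, that theorem forces $\Tcal$ to be completely decomposable; since $(2,0)$ is not on the base list $\{(0,1),(0,2),(1,0)\}$, the c-reduction tree must be nonempty, so $\Tcal$ carries a c-reducing curve, contradicting irreducibility. The paper instead argues directly, following the $l_i$-counting scheme from the proof of \autoref{thm:lower_boundV1}: since $b=0$ there are no cut curves, and a pants decomposition of $\Sigma_2$ contains at most one separating (waist) curve, so $l_i\leq 1$ for each $i$ (with equality when $k_i<2$); irreducibility then forces $\Lcal_n^*(\Tcal)\geq 2l_i=2$. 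Your approach is conceptually cleaner but imports the heavier machinery of \autoref{thm:ogawaV1} (whose proof already buries the counting argument), whereas the paper's proof is self-contained at the cost of being a special computation for genus two. One small imprecision worth noting: in the $b=0$ case the c-reduction need not be a distant sum of two $(1,0)$-multisections --- the c-reducing curve could be non-separating, giving a single connected sum of a $(1,0)$-multisection with $(S^1\times S^3,\emptyset)$. This does not affect your conclusion, since either way $\Tcal$ admits a c-reducing curve, which is all you need.
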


\begin{proof}
As in the proof of \autoref{thm:lower_boundV1}, it is enough to give a lower bound on the number of curves separating a once-holed torus from $\Sigma$. Since $(g,b)=(2,0)$, there is exactly one such curve, and $l_i=1$. Hence, $\Lcal_n^*(\Tcal)\geq 2$. 
\end{proof}

\subsubsection{Lower bound for trisections (\texorpdfstring{$n=3$}{n=3})}
The lower bound above can be improved if we specialize in trisections.

\begin{proposition}\label{prop:no_reducing_curves_V2}
Let $\mc T$ be a $(g,k;b,c)$-bridge trisection. Let $(P,P')$ and $(Q,Q')$ be efficient defining pairs for $(T_1,T_2)$ and $(T_1,T_3)$, respectively. Suppose that $(P\cap P')\cap Q$ is not empty. Then $\mc T$ is c-reducible. 
\end{proposition}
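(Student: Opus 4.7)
The plan is to start with a curve $x \in (P \cap P') \cap Q$ and to locate a c-reducing curve for $\Tcal$ from $x$ or from a nearby curve. Since $x \in P \cap Q$, the curve $x$ bounds a c-disk in $T_1$, and since $x \in P'$, it bounds a c-disk in $T_2$. The question reduces to whether $x$ itself (or a curve close to $x$) bounds a c-disk in $T_3$.

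First, if $x \in Q'$ as well, then $x$ lies in all three disk sets $\Dcal_1, \Dcal_2, \Dcal_3$ and is therefore c-reducing. Otherwise $x$ moves along the geodesic from $Q$ to $Q'$; by \autoref{lem:efficient}(2) it moves exactly once, and by \autoref{lem:efficient}(3) this forces $x$ to bound a compressing disk in $T_1$. Combining this with $x \in P'$, a parity argument inside $\partial X_2 \cong \#^{k_2} S^1 \times S^2$ shows that $x$ must also bound a compressing disk in $T_2$: if instead $x$ bounded a cut disk in $T_2$, the union of the compressing disk in $T_1$ with the cut disk would be a sphere meeting the unlink $L_2 = T_1 \cup \overline{T_2}$ in a single point, contradicting the fact that null-homologous curves have even geometric intersection with any embedded sphere. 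The single move of $x$ along $\lambda_{Q,Q'}$ is then either an $S$-move or an $A$-move.

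In the $S$-move case, $x \mapsto x'$ with $|x \cap x'| = 1$, and \autoref{lem:efficient}(3) applied to the reversed geodesic yields that $x'$ bounds a compressing disk in $T_3$. With $x$ supplying compressing disks in $T_1$ and $T_2$, \autoref{lem:stabilization} applies to $\{x, x'\}$ and immediately produces a c-reducing curve. In the $A$-move case, $x \mapsto x'$ with $|x \cap x'| = 2$; by \autoref{lem:Amoves}, $x$ bounds an embedded disk $D$ in $H_3$ which fails to be a c-disk for $T_3$, so $D$ meets $T_3$ in at least two points. Examining the $4$-holed sphere $E \subset \Sigma$ in which the $A$-move occurs, together with the puncture distribution dictated by the constraints that $x \notin \Dcal_3$ (so each side of $x$ in $E$ must contain at least two punctures of $\Sigma$) and $x' \in \Dcal_3$ (so some side of $x'$ in $E$ contains at most one puncture), I would arrange $D$ to meet $T_3$ in exactly two points on distinct strands, so that \autoref{lem:(12)red_implies_cred} applies and yields c-reducibility.

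The main obstacle is the $A$-move subcase, specifically justifying that the two intersection points of $D$ with $T_3$ lie on distinct components of $T_3$ rather than both on the same strand. This reduces to a combinatorial puncture count inside $E$, constrained simultaneously by the disk-set memberships of $x$ and $x'$ and by the trivial-tangle structure of $T_3$ in $H_3$.
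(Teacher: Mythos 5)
Your outline follows the paper's proof exactly: handle the case where $x$ is fixed along the $(Q,Q')$-geodesic; otherwise $x$ moves exactly once (\autoref{lem:efficient}), bounds compressing disks in $T_1$ and, by your parity argument, in $T_2$; the $S$-move subcase is dispatched with \autoref{lem:stabilization}; and the $A$-move subcase targets \autoref{lem:(12)red_implies_cred}, for which one must show that a disk $D$ bounded by $x$ in $H_3$ meets $T_3$ in exactly two points lying on distinct strands. You correctly identify this last step as the obstacle, and it is a genuine gap.

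Your proposed route for closing it, a combinatorial puncture count inside the $4$-holed sphere $E$, is not what the paper does, and I do not think it can work. First, the parenthetical inference that $x \notin \Dcal_3$ forces ``each side of $x$ in $E$ to contain at least two punctures of $\Sigma$'' does not follow: failure to bound a c-disk in $T_3$ is a statement about disks in $H_3$ meeting the tangle, not about punctures of $\Sigma$ visible inside $E$. More fundamentally, the question of whether the two intersection points lie on distinct strands of $T_3$ cannot be decided locally in $E$. The paper's argument chooses shadow systems $s_1$ for $T_1$ compatible with $Q$ and $s_3$ for $T_3$ compatible with $Q'$, observes that $s_3 \cap E$ is exactly two arcs $b_1, b_2$ each meeting $x$ once, and then, assuming $b_1, b_2$ lie on the same strand of $T_3$, follows the shadow arc $t \supset b_1$ \emph{outside} $E$ through a chain of pairs of pants of $\Sigma - Q'$. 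All the boundary curves met along the way are fixed curves of $Q \cap Q'$ bounding cut disks in both $T_3$ and $T_1$, and this traps a component of $s_1$ into closing up as a loop, contradicting that shadow systems are embedded arcs. This is a global coherence argument across the whole surface, not a local count in $E$; no amount of bookkeeping of punctures inside $E$ alone can detect whether $b_1$ and $b_2$ reconnect outside.
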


\begin{proof}
Let $x$ be a curve in $P\cap P'\cap Q$. Denote by $\lambda$ a fixed geodesic between $Q$ and $Q'$ from \autoref{lem:efficient}. 
If $x$ is fixed along $\lambda$, then it bounds a c-disk in $T_1$, $T_2$, and $T_3$ and $\Tcal$ is c-reducible.
Thus, we may assume that $x$ moves along $\lambda$. By \autoref{lem:efficient}, $x$ moves once via an S-move or an A-move $x\mapsto x'$. 
By the same lemma, $x$ and $x'$ bound compressing disks in $T_1$ and $T_3$, respectively. Since $T_1\cup \overline{T}_2$ is an unlink and $x$ is a c-reducing curve, $x$ must bound a compressing disk in $T_2$. Hence, if $x\mapsto x'$ is an S-move, then $x$ and $x'$ satisfy the hypothesis of \autoref{lem:stabilization} and $\mc T$ is c-reducible. 

Suppose that $x\mapsto x'$ is an A-move. By \autoref{lem:Amoves}, we know that both $x$ and $x'$ bound disks in both handlebodies $\{H_1,H_3\}$, $x$ does not bound a c-disk in $T_3$, and $x'$ does not bound a c-disk in $T_1$.  
Since $Q\in \Dcal(T_1)$ (resp. $Q\in \Dcal(T_3)$), we can find a shadow system $s_1$ for $T_1$ (resp. $s_3$ for $T_3$) such that $s_1$ (resp. $s_3$) intersects once every cut curve of $Q$ (resp. $Q'$) and is disjoint from every compressing curve of $Q$ (resp. $Q'$). 
We focus on the four-holed sphere $E$ where the move $x\mapsto x'$ occurs. It has four boundaries $\{z_1,z_2,z_3,z_4\}$ in $Q\cap Q'$ all of which bound (possibly trivial) cut disks for both $T_1$ and $T_3$ (see \autoref{fig:no_red_curves}(a)). In particular $s_1\cap E$ (resp. $s_3\cap E$) is exactly two subarcs $a_1\cup a_2$ (resp. $b_1\cup b_2$). Since $x'\in Q'$ bounds a compressing disk in $T_3$, $x'\cap (b_1\cup b_2)=\emptyset$. Thus the condition $|x\cap x'|=2$ forces $b_1$ and $b_2$ to intersect $x$ in exactly one interior point. An analogous condition holds between $a_1\cup a_2$ and $x'$. 

\begin{figure}[h]
\centering
\includegraphics[width=10cm]{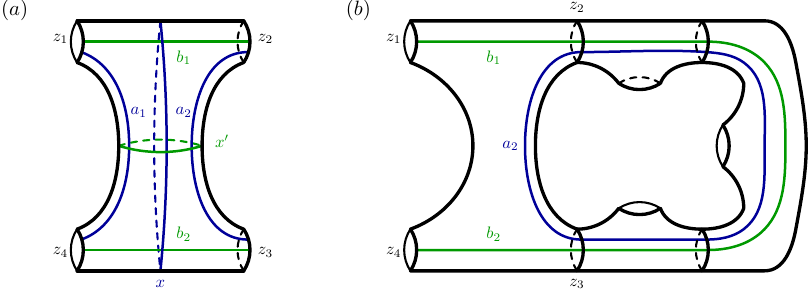}
\caption{A local model for $\Sigma$ near $x\cup x'$.}
\label{fig:no_red_curves}
\end{figure}

Suppose that $b_1$ and $b_2$ belong to the same arc of $T_3$. The connected component of $s_3$ containing $b_1$ is denoted by $t$ and connects an endpoint of $b_1$ with an endpoint of $b_2$. Without loss of generality, suppose that $t$ connects the boundaries $z_2$ and $z_3$. By construction of $s_3$, there is a (possibly empty) path of pairs of pants of $\Sigma-Q'$ starting in $z_2$ and ending in $z_3$ as in \autoref{fig:no_red_curves}(b) with the property that $t$ intersects each pair of pants in exactly one arc. The curves intersecting $t$ must bound cut disks in $T_3$ and so in $T_1$. In particular, by the construction of $s_1$, only one connected component of $s_1$ intersects these curves. This implies (see \autoref{fig:no_red_curves}(b)) 
that the subarc $a_2$ belongs to a loop component of $s_1$. This is a contradiction since shadow systems are collections of embedded arcs in $\Sigma$. 

The paragraph above showed that $b_1$ and $b_2$ must belong to distinct arcs of $T_3$. In particular, $x$ bounds a disk in $H_3$ intersecting two arcs of $T_3$ in exactly one interior point. Since, $x$ already bounds compressing disks for $T_1$ and $T_2$, we obtain that $\Tcal$ satisfies the hypothesis of \autoref{lem:(12)red_implies_cred}. By this lemma, $\Tcal$ is c-reducible, as desired. 
\end{proof}

\begin{theorem}\label{thm:lower_bound_1}
Let $\mc T$ be a c-irreducible $(g,k;b,c)$-bridge trisection. Then, 
\[ \Lcal_3^*(\mc T) \geq 6g+3b+(k_1+k_2+k_3)+(c_1+c_2+c_3)-9.\]
\end{theorem}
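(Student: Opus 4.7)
The plan is to combine the counting of fixed curves in each efficient defining pair (from \autoref{lem:efficient}) with the c-irreducibility constraint from \autoref{prop:no_reducing_curves_V2}, applied cyclically to the three disk sets.

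\textbf{Setup.} Fix an optimal loop $\lambda$ realizing $\Lcal_3^*(\mc T)$ with efficient defining pairs $(P_i,P_i')$ for $(\Dcal_i,\Dcal_{i+1})$, where $P_i\in\Dcal_i$ and $P_i'\in\Dcal_{i+1}$ (indices mod $3$). By definition,
\[ \Lcal_3^*(\mc T) \;=\; d^*(P_1',P_2) + d^*(P_2',P_3) + d^*(P_3',P_1). \]
Our task is to bound each internal segment $d^*(P_i',P_{i+1})$ from below; both endpoints lie in $\Dcal_{i+1}$.

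\textbf{Step 1: counting fixed curves.} The splitting $H_i\cup H_{i+1}$ represents a $c_{i+1}$-component unlink in $\#^{k_{i+1}}S^1\times S^2$, so \autoref{lem:efficient} gives $d(P_i,P_i')=g-k_{i+1}+b-c_{i+1}$. Since any pants decomposition of $\Sigma=\Sigma_{g,2b}$ contains $3g+2b-3$ curves, the set of curves fixed between $P_i$ and $P_i'$ satisfies
\[ |P_i\cap P_i'| \;\geq\; (3g+2b-3)-(g-k_{i+1}+b-c_{i+1}) \;=\; 2g+b+k_{i+1}+c_{i+1}-3. \]

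\textbf{Step 2: c-irreducibility forbids the fixed curves from reappearing.} The pair $(P_i',P_i)$ is an efficient defining pair for $(T_{i+1},T_i)$ and the pair $(P_{i+1},P_{i+1}')$ is efficient for $(T_{i+1},T_{i+2})$; both share the tangle $T_{i+1}$, so they play the role of $(P,P')$ and $(Q,Q')$ in \autoref{prop:no_reducing_curves_V2} after relabeling. Since $\mc T$ is c-irreducible, the conclusion of that proposition is forbidden, hence
\[ (P_i\cap P_i')\cap P_{i+1} \;=\; \emptyset. \]

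\textbf{Step 3: turning this into a distance bound.} Each edge of $\mc{C}^*(\Sigma)$ changes exactly one curve of the pants decomposition, so along any path the symmetric difference $|P_i'\triangle P_{i+1}|$ grows by at most $2$ per step. As $|P_i'|=|P_{i+1}|$, this gives $d^*(P_i',P_{i+1})\geq |P_i'\setminus P_{i+1}|$. By Step 2 every curve of $P_i\cap P_i'$ (which sits inside $P_i'$) is absent from $P_{i+1}$, so
\[ d^*(P_i',P_{i+1}) \;\geq\; |P_i\cap P_i'| \;\geq\; 2g+b+k_{i+1}+c_{i+1}-3. \]
Summing over $i=1,2,3$ yields
\[ \Lcal_3^*(\mc T) \;\geq\; 6g+3b+(k_1+k_2+k_3)+(c_1+c_2+c_3)-9, \]
as required.

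\textbf{Main obstacle.} The delicate point is Step 2: we must verify that \autoref{prop:no_reducing_curves_V2}, stated for two efficient pairs sharing the tangle $T_1$, applies after cyclically relabeling so that the shared tangle is $T_{i+1}$. This is only a matter of symmetry among the three tangles of the trisection and the directions of the efficient pairs (reversing $(P,P')$ to $(P',P)$ does not change the intersection $P\cap P'$), but it is the step most in need of explicit care. The symmetric difference argument of Step 3 is standard for distances in graphs of pants decompositions, and Step 1 is a direct count.
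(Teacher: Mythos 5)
Your proof follows exactly the paper's strategy: use \autoref{lem:efficient} to count the $|P_i\cap P_i'| = (3g+2b-3)-d(P_i,P_i')$ curves fixed across each efficient pair, use \autoref{prop:no_reducing_curves_V2} (applied to the two efficient pairs sharing the tangle $T_{i+1}$) to conclude that c-irreducibility forces those fixed curves to disappear before reaching $P_{i+1}$, and translate that into the lower bound $d^*(P_i',P_{i+1})\geq|P_i\cap P_i'|$ before summing. Your careful spelling out of the relabeling in Step 2 and the symmetric-difference argument in Step 3 only makes explicit what the paper leaves implicit; there is no substantive difference.
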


\begin{proof}
Let $\lambda$ be a loop in $\mc{C}^*(\Sigma)$ as in the definition of $\Lcal_n^*(\Tcal)$ (see \autoref{fig:islands}).
By \autoref{prop:no_reducing_curves_V2}, each curve in $P_i\cap P'_i$ must move in the $\Dcal_i$ island. Thus, the distance between two consecutive efficient defining pairs is at least $|P_i\cap P'_i|$. By \autoref{lem:efficient}, \[|P_i\cap P'_i|=|P_i|-d(P_i,P'_i)=(3g+2b-3)-(g-k_i+b-c_i).\]
Hence, 
$\Lcal_n^*(\Tcal)\geq \displaystyle \sum_{i=1}^3\left( 2g+k_i+b+c_i-3\right)$.
\end{proof}

We can rephrase the lower bound above in the following theorem. 

\begin{theorem}\label{thm:lower_bound_1_V2}
Let $\mc T$ be a c-irreducible $(g,b)$-bridge trisection for the pair $(X,F)$. 
Then, 
\[ \Lcal_3^*(\mc T) \geq 7(g-1)-\chi(X)+4b+\chi(F), \]
where $\chi(E)$ denotes the Euler characteristic of $E$.
\end{theorem}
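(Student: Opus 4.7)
The plan is to derive this from \autoref{thm:lower_bound_1}, which already gives a lower bound in terms of the complexities $(g,k;b,c)$, and then convert the quantities $\sum_i k_i$ and $\sum_i c_i$ into topological invariants of $X$ and $F$.

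First I would recall the Euler characteristic formula for a trisected $4$-manifold given just after \autoref{def:multisection}. With $n=3$, it reads $\chi(X)=2+g-(k_1+k_2+k_3)$, hence
\[
k_1+k_2+k_3 = 2+g-\chi(X).
\]

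Next I would compute $\chi(F)$ directly from the bridge trisection structure. Give $F$ the natural CW decomposition coming from the decomposition $(X,F)=(X_1,D_1)\cup(X_2,D_2)\cup(X_3,D_3)$: the $0$-cells are the $2b$ points of $\Sigma\cap F$; the $1$-cells are the arcs of the three trivial tangles $T_1,T_2,T_3$, giving $3b$ edges in total; the $2$-cells are the boundary parallel disks of $D_1,D_2,D_3$, totaling $c_1+c_2+c_3$. Therefore
\[
\chi(F) = 2b - 3b + (c_1+c_2+c_3) = -b + (c_1+c_2+c_3),
\]
so $c_1+c_2+c_3 = \chi(F)+b$.

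Finally I would substitute both expressions into the bound of \autoref{thm:lower_bound_1}:
\[
\Lcal_3^*(\mc T) \geq 6g+3b+(k_1+k_2+k_3)+(c_1+c_2+c_3)-9
= 6g+3b+(2+g-\chi(X))+(\chi(F)+b)-9,
\]
which simplifies to $7g+4b-7-\chi(X)+\chi(F) = 7(g-1)-\chi(X)+4b+\chi(F)$, exactly as claimed. The only step that requires any care is the CW computation of $\chi(F)$; since the disks in each $D_i$ are boundary parallel and the tangle arcs are embedded, no identifications beyond those at the $2b$ punctures of $\Sigma$ occur, so the count is unambiguous.
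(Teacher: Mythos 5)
Your proposal is correct and follows exactly the same route as the paper: both apply \autoref{thm:lower_bound_1} and convert $\sum_i k_i$ and $\sum_i c_i$ into $\chi(X)$ and $\chi(F)$ via the formulas $\chi(X)=2+g-\sum_i k_i$ and $\chi(F)=\sum_i c_i - b$. The only difference is that you spell out the CW-complex computation of $\chi(F)$, which the paper states without proof.
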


\begin{proof}
The trisection surface $\Sigma$ is a $2b$-punctured orientable surface of genus g.
By definition, the unpunctured surface $\hat{\Sigma}$ is the central surface for a $(g,k)$-trisection of $X$. Thus, 
\[ \chi(X)=2+g-(k_1+k_2+k_3) \quad \text{and} \quad \chi(F)=c_1+c_2+c_3-b.\]
The result holds by substituting the equations in \autoref{thm:lower_bound_1}.
\end{proof}

\section{Genus two quadrisections}\label{sec:genus_two_quadrisec}

The goal of this section is to determine which closed 4-manifolds admit quadrisections with complexity $(g,k)=(2,1)$ and $\Lcal_4^*\leq 6$. Towards this end we obtain two main results, \autoref{thm:L>=6} and \autoref{thm:L=6}, proven in \autoref{sec: main theorem 1} and \autoref{sec: main theorem 2}, respectively. In \autoref{section:curves_in_genus_two} we collect preliminary results on curves on genus two surfaces needed for the proof of \autoref{thm:L=6}.

\begin{theorem}\label{thm:L>=6}
Let $\Tcal$ be a $(2,1)$-quadrisection of a closed $4$-manifold $X$. If $\Lcal_4^*(\Tcal)<6$, then $X$ is diffeomorphic to $\#^2 S^1\times S^3$ or $\#^h S^1\times S^3\#^iS^2\times S^2\#^j\mathbb{CP}^2\#^k \overline{\mathbb{CP}^2}$ where $h=\{0,1\}$.
\end{theorem}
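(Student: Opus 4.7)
The plan is to show that the hypothesis $\Lcal_4^*(\Tcal)\le 5$ forces $\Tcal$ to be c-reducible, and then invoke \autoref{lem:completely_decomposable} together with an induction on genus to identify $X$.

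\textbf{Setup.} In a $(2,1)$-quadrisection one has $g=2$, $b=c_i=0$, and each $k_i=1$. By \autoref{lem:efficient} the distance between consecutive disk sets is $g-k_{i+1}=1$, realized by a single $S$-move in a once-holed torus. Each efficient pair $(P_i,P_i')$ therefore shares two of its three curves: a separating waist curve $w_i$ bounding the torus where the $S$-move occurs, and a non-separating companion $y_i$ lying in the opposite one-holed torus. By parts (2)-(3) of \autoref{lem:efficient}, both $w_i$ and $y_i$ bound compressing disks in $H_i$ and in $H_{i+1}$.

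\textbf{Locating a fixed reducing curve.} Write $\lambda$ for the length-$(4+\Lcal_4^*(\Tcal))$ loop realizing the invariant and let $\lambda_i$ denote its interior sub-path in $\Dcal_{i+1}$ from $P_i'$ to $P_{i+1}$. The aim is to prove that some element of $\{w_1,y_1,\dots,w_4,y_4\}$ is fixed throughout $\lambda$: if $w_i$ is fixed then it bounds a compressing disk in every $H_j$, giving a separating c-reducing sphere; if some $y_i$ is fixed and meets a dual curve bounding in the remaining $H_j$'s then \autoref{lem:stabilization} yields c-reducibility. I rule out the alternative by tracking the waists $w_1,\dots,w_4$ around the four disk sets. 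A pants decomposition of a genus-two surface has at most one separating curve, so to swap one waist for a different one requires an internal edge in the pants complex; moreover, once a new waist is installed, at least one of the two non-separating companions on its sides must also change. A careful bookkeeping in $\mc{C}^*(\Sigma_2)$ shows that a full ``rotation'' of the distinguished pair $\{w_i,y_i\}$ around all four disk sets without any fixed shared curve would accumulate at least $6$ interior edges, violating $\Lcal_4^*(\Tcal)\le 5$.

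\textbf{Inductive decomposition.} Once c-reducibility is established, \autoref{subsec:new_from_old} (with $b=c_i=0$, so only compressing-disk cases arise) gives that $\Tcal$ is either a distant sum of two smaller quadrisections or a connected sum of a smaller quadrisection with the genus-one quadrisection of $(S^1\times S^3,\emptyset)$. Since the reducing curve is fixed along $\lambda$, the $\Lcal_4^*$-bound descends to each piece, so I iterate. The atoms of the decomposition are multisections of central surface genus $\le 1$, which by \autoref{lem:completely_decomposable} contribute summands $S^2\times S^2$, $\pm\mathbb{CP}^2$, or $S^1\times S^3$ to $X$. Accumulating these summands I obtain $X\cong\#^h S^1\times S^3\#^i S^2\times S^2\#^j\mathbb{CP}^2\#^k\overline{\mathbb{CP}^2}$; a counting argument using the central genus drop at each reduction forces $h\in\{0,1\}$, with the sole exception being the case in which $\Tcal$ decomposes into two genus-one pieces each contributing an $S^1\times S^3$ factor and no other summands, yielding the exceptional possibility $X=\#^2 S^1\times S^3$.

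\textbf{Main obstacle.} The hard step is the second one: translating the quantitative bound $\Lcal_4^*(\Tcal)\le 5$ into the existence of a globally fixed reducing curve on $\Sigma_2$. A priori the waist $w_i$ and its companion $y_i$ could rotate independently around the four disk sets, and making precise the ``cost of rotation'' in $\mc{C}^*(\Sigma_2)$ demands a fine case analysis of pants decompositions in a genus-two surface. The sharpness of the threshold $6$, realized by the spun lens spaces of \autoref{thm:spunlens}, is what makes this pigeonhole argument delicate but feasible.
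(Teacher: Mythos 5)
Your plan follows the paper's at the top level: reduce $\Lcal_4^*(\Tcal)<6$ to c-reducibility, then split $\Tcal$ into genus-one pieces whose total manifolds are catalogued by Islambouli--Naylor (your appeal to \autoref{lem:completely_decomposable} is equivalent, since that lemma also rests on their classification). The framing via waist curves $w_i$ and companions $y_i$ coming from \autoref{lem:efficient} is also the same as the paper's setup, where the efficient pair consists of $\{g,\psi_1,\psi_2\}$, $\{g',\psi_1,\psi_2\}$ with $\psi_2$ separating.

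However, there are two genuine gaps. First, the entire quantitative content of the theorem — turning $\Lcal_4^*(\Tcal)\le 5$ into the existence of a fixed c-reducing curve — is asserted but not proved: the ``careful bookkeeping in $\mc{C}^*(\Sigma_2)$'' is exactly the paper's \autoref{prop:4section_2+1}, a delicate case analysis using dual-move structure, intersection-number arithmetic in $H_1(\Sigma,\Z)$, and \autoref{lem:stabilization}. You explicitly acknowledge this as the main obstacle, but it cannot be left as a black box; everything else is routine by comparison, and the claimed threshold $6$ only comes out of this case analysis.

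Second, and more substantively, your claim that $\Lcal_4^*(\Tcal)\le 5$ \emph{forces} c-reducibility is stronger than what the paper's argument can deliver. The dichotomy established by \autoref{prop:4section_2+1} is that $\Tcal$ is c-reducible \emph{or} $X\cong S^4$. In the subcases where c-reducibility cannot be extracted, the argument instead shows via \autoref{lem:Y_is_S3} that $Y_{13}=H_1\cup_\Sigma\overline{H_3}$ is a weakly reducible genus-two Heegaard splitting with $H_1(Y_{13};\Z)=0$, hence $Y_{13}=S^3$ and $X\cong S^4$ — without any c-reducing curve on the central surface. Your iterate-and-decompose machinery runs only in the c-reducible branch, so as written your proof would simply not conclude in those configurations. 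You would need either to add the separate $S^4$-detection argument, or to prove the (possibly false, and in any case unproved) stronger claim that every $(2,1)$-quadrisection with $\Lcal_4^*<6$ is c-reducible.
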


\begin{theorem}\label{thm:L=6}
Let $\Tcal$ be a $(2,1)$-quadrisection of a closed $4$-manifold $X$. If $\Lcal_4^*(\Tcal)=6$, then $X$ is diffeomorphic to the spin of a lens space, the twisted-spin of a lens space, $S^1\times S^3 \# S^2\times S^2$ or $S^1\times S^3\# S^2\widetilde{\times} S^2$.
\end{theorem}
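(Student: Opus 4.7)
Let $\Tcal$ be a $(2,1)$-quadrisection of $X$ with $\Lcal_4^*(\Tcal)=6$, and fix a minimizing loop $\lambda$ in $\mc{C}^*(\Sigma)$ passing through efficient defining pairs $(P_i,P'_i)$ for $(\Dcal_i,\Dcal_{i+1})$ with $i\in\Z/4\Z$. Since $g=2$, each $k_i=1$, and $b=c=0$, \autoref{lem:efficient} implies that every pair $(P_i,P'_i)$ differs by a single $S$-move, so $|P_i\cap P'_i|=2$ and the moved curves lie in a one-holed torus cut off by a separating ``waist'' curve $w_i\in P_i\cap P'_i$. Consequently each vertex of $\lambda$ is a pants decomposition consisting of one separating curve together with two non-separating curves (one in each one-holed torus), and the six internal edges of $\lambda$ are distributed among the four disk sets $\Dcal_1,\dots,\Dcal_4$.

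The first step is to extend the argument behind \autoref{thm:L>=6} to the boundary case $\Lcal_4^*=6$. If some curve is fixed along all of $\lambda$, it is a c-reducing curve for $\Tcal$, and I would c-reduce as in \autoref{subsec:new_from_old}, tracking how the $\Lcal^*$-budget splits among the resulting pieces. Combined with \autoref{lem:completely_decomposable} and a direct inspection of the small-complexity building blocks that may appear, this forces the c-reducible case at $\Lcal^*=6$ to produce either forms already appearing in \autoref{thm:L>=6} or the new possibilities $S^1\times S^3\#S^2\times S^2$ and $S^1\times S^3\#S^2\widetilde{\times}S^2$, which arise as self-tubings of a simpler multisection whose extra $S^2$-bundle summand precisely accounts for the six units of distance.

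Otherwise $\Tcal$ is c-irreducible, so no single curve is fixed throughout $\lambda$; in particular each of the four waists $w_i$ and each of the non-separating pants curves must move in some disk set. Combining this constraint with the equality $\sum_i d^*(P'_i,P_{i+1})=6$, together with \autoref{lem:stabilization} and \autoref{lem:Amoves} to exclude configurations that would create unexpected c-reducing curves, I expect to show that the resulting quadrisection diagram must coincide with a standard $(2,1)$-quadrisection diagram of a (possibly twisted) spin of a lens space $L(p,q)$. Here the integer $p$ is encoded by the twist between consecutive handlebody gluings along the common waist, while the presence or absence of an additional twist distinguishes the ordinary spin from the twisted spin.

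The main obstacle is this c-irreducible case: the six internal edges can be split among the four disk sets in several combinatorially distinct patterns, and each candidate distribution must be cross-checked against the explicit quadrisection diagrams of (twisted) spun lens spaces. Ruling out ``near-miss'' diagrams that numerically satisfy $\Lcal_4^*=6$ but produce a different closed $4$-manifold requires careful bookkeeping of how the waists $w_i$ and the non-separating curves evolve around $\lambda$, and is where the bulk of the combinatorial work will lie.
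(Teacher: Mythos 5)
Your outline identifies the right starting point -- by \autoref{lem:efficient} each efficient pair differs by one $S$-move, so there is a separating waist curve, and the budget of six edges must be distributed among the four disk sets -- but you stop at the point where the real work begins, and some of what you do assert about that work goes in the wrong direction.

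The paper's argument hinges on \autoref{prop:4section_2+1}: if $d(P',Q')\leq 2$ and $d(Q,R)\leq 1$ for some consecutive pair of islands, then $\Tcal$ is c-reducible or $X\cong S^4$. Of all the ways to write $6=d_1+d_2+d_3+d_4$, every distribution \emph{except} the cyclic pattern $3+0+3+0$ has some index $i$ with $d_i\leq 2$ and a neighbor $d_{i\pm 1}\leq 1$, so it falls under that proposition and gives nothing new beyond \autoref{thm:L>=6}. You do not notice that all the combinatorial patterns collapse into the single pattern $3{+}0{+}3{+}0$, so you never get to the structure that makes the rest of the proof go: $P=S$, $Q=R$, and both length-$3$ geodesics connect $P'$ to $Q'$ and $S'$ to $R'$. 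That forces $R'=\{f'',\gamma_1,\gamma_2\}$ and $S'=\{g'',\psi_1,\psi_2\}$ (else \autoref{lem:Y_is_S3} gives $X\cong S^4$), and then the two claims about the shape of the length-$3$ paths split into cases $f'=g'$ / $f'\neq g'$ that you would need to analyze. In the case $f'\neq g'$ one finds $f'=f''$, $g'=g''$, hence $H_2=H_4$ with the identity gluing, so $X$ is the \emph{double} of $X_1\cup X_2$; the remaining case $f'=g'$, $f''=g''$ uses the curve classification \autoref{prop:scc_in_genus_2} on the genus-two surface. Both conclude with an explicit Kirby diagram (a $(p,1)$-torus knot 2-handle linking a dotted circle) identifying $X$ as a (twisted) spin of a lens space or $S^1\times S^3\#S^2\times S^2$ / $S^1\times S^3\#S^2\widetilde{\times}S^2$. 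None of these steps are present in your proposal.

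Your treatment of the c-reducible branch is also off. A c-reducible $(2,1)$-quadrisection splits along a compressing curve into two genus-one quadrisections, so by \autoref{lem:completely_decomposable} and \cite{islambouli2022toric} the resulting $X$ is already in the list from \autoref{thm:L>=6}; that is how the paper handles it (implicitly, via \autoref{prop:4section_2+1}). Your claim that $S^1\times S^3\#S^2\times S^2$ and $S^1\times S^3\#S^2\widetilde{\times}S^2$ ``arise as self-tubings'' is mistaken -- there is no surface present ($b=c=0$), so self-tubing does not apply, and those two manifolds in fact appear in the \emph{c-irreducible} $3{+}0{+}3{+}0$ analysis, as the ``linking number zero'' degenerations of the Kirby diagram that otherwise gives a spun lens space. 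So the proposal is missing the core combinatorial reduction, the homological/Heegaard-splitting lemma \autoref{lem:Y_is_S3}, the curve classification of \autoref{prop:scc_in_genus_2}, and the Kirby-diagram endgame, and the sketch it does give for the c-reducible case is not correct.
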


\subsection{Multisections with \texorpdfstring{$\Lcal_4^*(\Tcal)<6$}{L*(T)<6}}\label{sec: main theorem 1}

Fix $\mathcal{T}$ a $(2,1)$-quadrisection of a closed 4-manifold $X$. Let $(P,P')$, $(Q,Q')$, $(R,R')$, and $(S,S')$ be efficient defining pairs for $(H_1,H_2)$, $(H_3,H_2)$, $(H_3,H_4)$, and $(H_1,H_4)$, respectively. By \autoref{lem:efficient}, there exist simple closed curves $f$, $f'$, $g$, $g'$, $\psi_1$, $\psi_2$, $\gamma_1$, $\gamma_2$ in $\Sigma$ such that $P=\{g,\psi_1,\psi_2\}$, $P'=\{g',\psi_1,\psi_2\}$, $Q=\{f,\gamma_1,\gamma_2\}$, $Q'=\{f',\gamma_1,\gamma_2\}$, $|g\cap g'|=1=|f\cap f'|$, and $\psi_2,\gamma_2$ are separating circles in $\Sigma$. 

The following lemmas discuss particular quadrisections we will see often. A Heegaard splitting $U\cup_F V$ is said to be \textit{weakly reducible} if there are compressing disks $E\subset U$ and $D\subset V$ with disjoint boundaries. Compressing along $E$, along $D$, and along both $E$ and $D$ yield three new simpler surfaces. This process is called \textit{untelescoping} a weakly reducible Heegaard splitting. See \cite{Schultens3m} for a more detailed discussion on this process. 

\begin{lemma}\label{lem:Y_is_S3}
Suppose that $Y_{13}=H_1\cup_\Sigma \overline{H_3}$ is a weakly reducible Heegaard splitting with $H_1(Y_{13},\Z)=0$. Then, $Y_{13}=S^3$ and $X\cong S^4$. 
\end{lemma}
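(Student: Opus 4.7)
The plan is two-fold: first deduce $Y_{13}\cong S^3$ from the weakly reducible Heegaard splitting and the vanishing of $H_1(Y_{13};\Z)$, and then upgrade this to $X\cong S^4$ using the $(2,1)$-quadrisection structure on $X$.

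For the first step, weak reducibility provides disjoint compressing disks $E\subset H_1$ and $D\subset \overline{H_3}$ with $\partial E\cap\partial D=\emptyset$ on $\Sigma$. I would invoke the Casson-Gordon dichotomy: either the genus-two splitting $H_1\cup_\Sigma\overline{H_3}$ is reducible, or $Y_{13}$ contains an essential surface $S$ arising from the untelescoping, whose components have Euler characteristic totaling $-\chi(\Sigma)+\text{(contributions from the two compressions)}=2$, so each component has genus at most one. In the reducible case, $Y_{13}=Y_1\#Y_2$ with each $Y_j$ admitting a genus-one Heegaard splitting, and $H_1(Y_{13})=H_1(Y_1)\oplus H_1(Y_2)=0$ forces each $Y_j$ to be $S^3$; hence $Y_{13}=S^3$. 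The essential-sphere subcase contradicts irreducibility of $Y_{13}$, so the only remaining possibility is an essential torus $T\subset Y_{13}$. I would rule this out by a Mayer-Vietoris argument: since $T$ is null-homologous in the integer homology sphere $Y_{13}$, it separates $Y_{13}$ into two compact 3-manifolds $A,B$ each carrying a genus-one Heegaard splitting from the untelescoping; the resulting exact sequence forces $H_1(A)\oplus H_1(B)$ to be generated by the image of $H_1(T)\cong\Z^2$, a constraint that is incompatible with both $A,B$ having genus-one Heegaard splittings unless $T$ itself is compressible, a contradiction.

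For the second step, with $Y_{13}=S^3$ in hand, Waldhausen's uniqueness theorem gives that the genus-two splitting is the standard one, providing dual systems $\{D_1,D_2\}\subset H_1$ and $\{D_1^*,D_2^*\}\subset H_3$ with $|\partial D_i\cap\partial D_j^*|=\delta_{ij}$ on $\Sigma$. To handle $H_2$ and $H_4$, I would use that each $\partial X_i=H_{i-1}\cup_\Sigma\overline{H_i}$ is a genus-two Heegaard splitting of $\#^{k_i}(S^1\times S^2)=S^1\times S^2$. Each such splitting is a stabilization of the unique genus-one splitting, so it carries a common compressing curve between $H_{i-1}$ and $H_i$. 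Combined with the standard structure on $H_1\cup \overline{H_3}$, these shared curves pin down the cut systems of $H_2$ and $H_4$ up to handlebody isotopy, identifying $\mathcal{T}$ with the standard $(2,1)$-quadrisection of $S^4$. In particular, $\mathcal{T}$ is completely decomposable, and \autoref{lem:completely_decomposable} applied to the fact that each summand must have trivial $H_1$ (inherited from $Y_{13}=S^3$) gives $X\cong S^4$.

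The main obstacle will be ruling out the essential-torus subcase in Step 1, since toroidal integer homology spheres with small Heegaard genus do exist (for example, various Seifert-fibered spaces). The homological Mayer-Vietoris argument above should suffice, but the clean form of the exclusion may require careful bookkeeping of which integer homology 3-manifolds admit splittings whose untelescopings have exactly the prescribed pieces; the additional leverage coming from the quadrisection data (namely that $\Sigma$ is also the central surface of three additional genus-two Heegaard splittings of $S^1\times S^2$) should be enough to close this case.
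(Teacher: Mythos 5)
Your Step 1 chases a phantom case. For a \emph{genus two} Heegaard splitting, the thin surface obtained by untelescoping a weak reduction has Euler characteristic at least $\chi(\Sigma)+4=-2+4=2$, so it is a union of spheres; in particular no essential torus can arise. (Your bookkeeping $-\chi(\Sigma)+\cdots$ has a sign error, and even after fixing it, the conclusion ``each component has genus at most one'' does not follow from the total $\chi$ alone and is not the right target.) The correct statement, due to Casson--Gordon, is that a genus two Heegaard splitting is weakly reducible if and only if it is reducible, and this is exactly what the paper invokes (thin position gives a thin $2$-sphere, then Haken's Lemma). Your Mayer--Vietoris argument against the torus subcase does not work as stated: toroidal integer homology $3$-spheres of Heegaard genus two certainly exist (e.g.\ many Seifert-fibered homology spheres), so homology of the complementary pieces cannot rule out the torus without using the weak-reducibility structure more seriously. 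Fortunately, you never need to: the genus constraint already kills the torus case.

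Your Step 2 has a genuine gap. Knowing $Y_{13}=S^3$ together with the fact that each $\partial X_i$ is a genus two splitting of $S^1\times S^2$ does \emph{not} pin down the cut systems for $H_2$ and $H_4$ up to isotopy; the assertion that the quadrisection is thereby identified with the standard $(2,1)$-quadrisection of $S^4$ is not justified, and invoking \autoref{lem:completely_decomposable} would require first exhibiting c-reducing curves, which you have not done. The paper's route is cleaner and actually works: once $Y_{13}\cong S^3$, this $3$-sphere separates $X$ into the two ``bisections'' $X_1\cup X_2$ and $X_3\cup X_4$; capping each with a $4$-ball along $Y_{13}$ produces closed $4$-manifolds $A$ and $B$ with $X\cong A\#B$, and each of $A$, $B$ inherits a $(2;1,1,0)$-trisection with central surface $\Sigma$. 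By the classification of genus two trisections (Meier--Schirmer--Zupan), $A\cong B\cong S^4$, hence $X\cong S^4$. You should replace your Step 2 with this connect-sum-and-classify argument.
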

\begin{proof}
Since $H_1\cup_\Sigma H_3$ is a genus two weakly reducible Heegaard splitting, this can be untelescoped to have a thin level 2-sphere (see \cite{thin_pos_3M}). By Haken's Lemma, $H_1\cup_\Sigma H_3$ is reducible. The 3-manifold $Y_{13}$ is then the connected sum of two lens spaces. Since $S^3$ is the only lens space with trivial first homology over $\Z$, $Y_{13}=S^3$. 

We now focus our attention on the quadrisection of $X$. The 3-manifold $Y_{13}$ separates $X$ into two 4-manifolds $A-\interior(B^4)=X_1\cup X_2$ and $B-\interior(B^4)=X_3\cup X_4$. Since $Y_{13}=S^3$, $X\cong A\#B$ where $A$ and $B$ are closed 4-manifolds with $(2;1,1,0)$-trisections. The latter can be seen by filling in the ``bisections'' $X_i\cup X_{i+1}$ with a 4-ball. By \cite{MSZ_Classifying}, these are stabilizations of the genus zero trisection of $S^4$. Hence, $X\cong A\#B\cong S^4$.
\end{proof}

\begin{proposition}\label{prop:4section_2+1}
Suppose $d(P',Q')\leq 2$ and $d(Q,R)\leq 1$. Then $\mathcal{T}$ is c-reducible or $X\cong S^4$.  
\end{proposition}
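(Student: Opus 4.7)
My plan is to show that the distance hypotheses force either the existence of a simple closed curve on $\Sigma$ bounding compressing disks in each of $H_1, H_2, H_3, H_4$ (c-reducibility of $\Tcal$) or the full assumptions of \autoref{lem:Y_is_S3}, yielding $X \cong S^4$.

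First I would apply \autoref{lem:efficient} to each of the four efficient pairs. Since we are in a $(2,1)$-quadrisection we have $(g,b,c)=(2,0,0)$, so every efficient pair has distance exactly one in $\mc{C}^*$ via a single S-move, and the two shared curves of each pair form a reducing system (a separating waist curve together with a non-separating curve) for the corresponding boundary Heegaard splitting. Thus $\psi_1,\psi_2 \in \Dcal(H_1)\cap \Dcal(H_2)$ with $\psi_2$ separating, $\gamma_1,\gamma_2 \in \Dcal(H_2)\cap \Dcal(H_3)$ with $\gamma_2$ separating, and applying the lemma to $(R,R')$ yields $R = \{\rho_1,\rho_2,r\}$ where $\rho_1,\rho_2 \in \Dcal(H_3)\cap \Dcal(H_4)$ (with $\rho_2$ separating) and $r$ is the non-separating curve moved by the S-move of $(R,R')$.

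Next I would exploit $d(Q,R)\leq 1$. Since pants decompositions of $\Sigma_2$ have three curves, $|Q\cap R|\geq 2$, so at least one of $\{\rho_1,\rho_2\}$ lies in $Q = \{f,\gamma_1,\gamma_2\}$. Because $r$ is non-separating while $\gamma_2$ is separating, $r \neq \gamma_2$, and a short case analysis shows that some curve $\gamma \in \{\gamma_1,\gamma_2\}$ lies in $\Dcal(H_2)\cap \Dcal(H_3)\cap \Dcal(H_4)$. I would then use $d(P',Q')\leq 2$: a length-$2$ geodesic $P' \to M \to Q'$ in $\mc{C}^*$ produces an intermediate pants decomposition $M$ satisfying $|P'\cap M| = |M \cap Q'| = 2$ and (necessarily) $|P'\cap Q'| = 1$. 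Writing $M = \{x, y_P, y_Q\}$ with $x \in P'\cap Q'$, $y_P \in P'\setminus\{x\}$, and $y_Q \in Q'\setminus\{x\}$, I would case-analyze on which of the six curves $\{g',\psi_1,\psi_2,f',\gamma_1,\gamma_2\}$ coincide at $x$. In the favorable case $x = \psi_i = \gamma_j$ with $\gamma_j = \gamma$, the curve $x$ bounds compressing disks in all four handlebodies, giving c-reducibility.

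In every remaining subcase, the pants decomposition $M$ contains a pair of disjoint curves, one bounding a compressing disk in $H_1$ (a $\psi$-type curve, possibly $x$ itself) and one bounding a compressing disk in $H_3$ (a $\gamma$-type curve or $x$), which realizes a weak reduction of $Y_{13} = H_1\cup_\Sigma \overline{H_3}$. I would then verify that the $(2,1)$-quadrisection structure forces $H_1(Y_{13};\Z) = 0$ via a Mayer--Vietoris argument on $X = (X_1\cup X_2)\cup (X_3\cup X_4)$, each piece built from $X_i \cong \natural^1 (S^1\times B^3)$, combined with the constraints from the compressing systems identified above; then \autoref{lem:Y_is_S3} yields $Y_{13} = S^3$ and $X\cong S^4$. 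The main obstacle will be the careful bookkeeping of the subcases, especially $x = g' = f'$ where $x$ itself contributes no compressing disk to $H_1$ or $H_3$, together with pinning down the homological condition in each weakly reducible case.
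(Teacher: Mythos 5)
Your architecture --- constrain the efficient pairs via $d(Q,R)\leq 1$, case-analyze on the common curve(s) of $P'$ and $Q'$, and close each branch by either exhibiting a c-reducing curve or invoking \autoref{lem:Y_is_S3} --- matches the paper's, but two steps fail as written. The claim that $d(Q,R)\leq 1$ yields some $\gamma\in\{\gamma_1,\gamma_2\}$ in $\Dcal(H_4)$ is not correct: what $d(Q,R)\leq 1$ actually forces (because the separating compressing circle of a genus-two handlebody determines its two non-separating meridians) is that $Q$ and $R$ share both non-separating circles, so $R=\{f,\gamma_1,\rho_2\}$. Applying \autoref{lem:efficient} to $(R,R')$ then shows that exactly one of $\{f,\gamma_1\}$ stays fixed, and if the fixed one is $f$ rather than $\gamma_1$, and moreover $\rho_2\neq\gamma_2$, then no curve of $\{\gamma_1,\gamma_2\}$ lies in $\Dcal(H_4)$, so your ``favorable case'' need never occur.

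The more serious gap is the plan to derive $H_1(Y_{13};\Z)=0$ in ``every remaining subcase'' by Mayer--Vietoris. The $(2,1)$-quadrisection structure does not force this. Take the subcase $\psi_1=\gamma_1$ with $\gamma_1\notin\Dcal(H_4)$, which your scheme routes to the weak-reducibility branch: the cut systems $\{g,\psi_1\}$ for $H_1$ and $\{f,\psi_1\}$ for $H_3$ share the non-separating curve $\psi_1$, so their intersection matrix has a zero column, $Y_{13}\cong M\#(S^1\times S^2)$, and $H_1(Y_{13})\supseteq\Z$. What actually closes this (and most other) subcases in the paper is \autoref{lem:stabilization}: the curve $\psi_1=\gamma_1$ compresses in $H_1,H_2,H_3$, while the S-move in $(R,R')$ supplies a curve intersecting $\gamma_1$ once and compressing in $H_4$, so that pair satisfies the hypotheses of that lemma and $\Tcal$ is c-reducible. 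Your proposal never invokes \autoref{lem:stabilization}, yet it is the mechanism that resolves the bulk of the branches; the appeal to \autoref{lem:Y_is_S3} is only needed in the single subcase $\psi_1=f'$ with $d(P',Q')=2$, where $H_1(Y_{13})=0$ is verified by an explicit intersection-matrix computation rather than by any general homological constraint coming from the quadrisection.
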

\begin{proof}
First, recall that circles in $Q$ and $R$ bound compressing disks in $H_3$ and exactly one circle from $Q$ and one circle in $R$ are separating. In particular, given the separating circle in $Q$ (resp. $R$), the non-separating curves in $Q$ (resp. $R$) are determined by the handlebody $H_3$. Hence, the condition $d(Q,R)\leq 1$ forces $Q$ and $R$ to have the same non-separating circles. Thus, $\{f,\gamma_1\}\subset R$. By \autoref{lem:efficient}, the pants decomposition $R'$ contains exactly two non-separating curves $\{x,y\}$ with $x$ dual to one of $\{f,\gamma_1\}$ and disjoint from the other, and $y\in \{f,\gamma_1\}$. 

Suppose first that $d(P',Q')\leq 1$. By analogous reasoning as before, the non-separating curves in $P'$ and $Q'$ are the same; i.e., $\{f',\gamma_1\}=\{g',\psi_1\}$. If $\gamma_1=\psi_1$, then $\gamma_1$ will compress in the handlebodies $H_1$, $H_2$, and $H_3$. Moreover, $\gamma_1$ will either compress in $H_4$ ($\gamma_1=y$) or be dual to a compressing disk in $H_4$ ($\gamma_1=x$). Thus, $\mathcal{T}$ will be c-reducible by see \autoref{lem:stabilization}. If $\gamma_1=g'$, then $f'=\psi_1$. Here, depending on which one of $\{f,\gamma_1\}$ equals $y$, we will get a pair of curves as in \autoref{lem:stabilization}: if $f=y$ (resp. $\gamma_1=y$), then $f$ (resp. $\gamma_1$) bounds a disk in $H_3$, $H_4$ (resp. $H_2$, $H_3$, $H_4$) and its dual curve $f'$ (resp. $g$) bounds a curve in $H_1$ and $H_2$ (resp. $H_1$). 

Suppose now that $d(P', Q')=2$ and denote by $X$ the pants decomposition between $P'$ and $Q'$. One can see that, $\psi_2$ must move from $P'$ to $X$, and $\gamma_2$ most move from $X$ to $Q'$. If both $\{g',\psi_1\}$ are fixed between $P'$ and $Q'$, then we can proceed as if $d(P',Q')\leq 1$. Thus, we can assume that the dual move $X\rightarrow Q$ corresponds to one of $\{g',\psi\}$ moving to $\gamma_2$. In particular $\{f',\gamma_1\}$ and $\{g',\psi_1\}$ have one common curve. 
If $\gamma_1=\psi_1$, then $\gamma_1$ will bound a disk in $H_1$, $H_2$, $H_3$, and will be either bound a disk in $H_4$ or be dual to a disk in $H_4$. Thus, $\mathcal{T}$ will be c-reducible by \autoref{lem:stabilization}. 
We are left to discuss the situation when $\psi_1=f'$. Here, the pants decomposition $X$ contains the non-separating curves $\{\psi_1=f',g',\gamma_1\}$. As elements of $H_1(\Sigma,\Z)$, we can choose orientations of the curves so that, $[\gamma_1]=[\psi_1]+[g']$. Then the algebraic intersection between $\gamma_1$ and $g$ will equal $g'\cdot g=1$. Since $\psi_1=f'$ and $\psi_1\cdot f=1$,
\[
\begin{pmatrix}
\gamma_1\cdot \psi_1 & f\cdot \psi_1 \\
\gamma_1\cdot g & f\cdot g 
\end{pmatrix}
=
\begin{pmatrix}
0 & 1 \\
1 & * 
\end{pmatrix}.
\]
Hence, the 3-manifold $Y_{13}=H_1\cup_\Sigma \overline{H_3}$ satisfies $H_1(Y_{13},\Z)=0$. Moreover, the Heegaard splitting of $Y_{13}$ is weakly reducible ($\gamma_1\cap \psi_1=\emptyset$). Hence, by \autoref{lem:Y_is_S3}, $Y_{13}=S^3$ and $X$ is diffeomorphic to $S^4$. 
\end{proof}

The following is a consequence of \autoref{prop:4section_2+1}. 

\begin{theorem}\label{thm:(2,1)_irreducible_L>=6}
Let $\mathcal{T}$ be a c-irreducible quadrisection of a closed $4$-manifold $X$ with $(g,k)=(2,1)$. If $X\not \cong S^4$, then $\Lcal_4^*(\Tcal)\geq 6$.
\end{theorem}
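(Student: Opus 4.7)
I will argue the contrapositive: assume $\Lcal_4^*(\Tcal) \leq 5$ and derive that $\Tcal$ is c-reducible or $X \cong S^4$ by appealing to \autoref{prop:4section_2+1}. Fix efficient defining pairs realizing $\Lcal_4^*(\Tcal)$ and, in the notation of \autoref{prop:4section_2+1}, write the four terms that make up $\Lcal_4^*(\Tcal)$ as $\ell_1 = d(P,S)$, $\ell_2 = d(P',Q')$, $\ell_3 = d(Q,R)$, $\ell_4 = d(R',S')$, living in the islands $\Dcal_1, \Dcal_2, \Dcal_3, \Dcal_4$ respectively; by hypothesis $\ell_1 + \ell_2 + \ell_3 + \ell_4 \leq 5$.

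Consider the four cyclic adjacent sums $\ell_i + \ell_{i+1}$ (indices mod $4$). These are non-negative integers totaling $2(\ell_1 + \ell_2 + \ell_3 + \ell_4) \leq 10$, so pigeonhole forces some adjacent sum to be at most $2$; thus one element of that pair is $\leq 1$ and the other is $\leq 2$. The definition of a multisection is invariant under the dihedral action that relabels the sectors $\{X_1,\ldots,X_4\}$ by cyclic rotation or by reversal of the cyclic direction, and this action preserves $X$, c-irreducibility, and the invariant $\Lcal_4^*$ while acting on the sequence $(\ell_1, \ell_2, \ell_3, \ell_4)$ as the dihedral group of the $4$-cycle. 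Applying the appropriate symmetry, I may assume the small adjacent pair sits in positions $(\ell_2, \ell_3)$ with $\ell_2 \leq 2$ and $\ell_3 \leq 1$---exactly the hypothesis of \autoref{prop:4section_2+1}. That proposition then produces the desired conclusion, contradicting the assumption that $\Tcal$ is c-irreducible and $X \not\cong S^4$.

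The combinatorial bookkeeping is routine; all of the technical substance lives in \autoref{prop:4section_2+1}. The only place requiring care is confirming that reversing the cyclic direction of $\Tcal$ is a legitimate symmetry which converts an adjacent pair of the form $(\leq 1, \leq 2)$ into the form $(\leq 2, \leq 1)$ demanded by \autoref{prop:4section_2+1}; this amounts to a short unwinding of \autoref{def:multisection}, since re-indexing $X_j \mapsto X_{5-j}$ (mod $4$) sends $H_i \mapsto H_{4-i}$ and thereby replaces the sequence of inter-island distances by its reverse.
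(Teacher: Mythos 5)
Your proposal is correct and follows essentially the same route as the paper's proof: the paper likewise observes that every nonnegative integer solution of $d_1+d_2+d_3+d_4<6$ contains a cyclically adjacent pair with one entry at most $1$ and the other at most $2$, and then invokes \autoref{prop:4section_2+1}. Your version simply spells out the pigeonhole count on the four adjacent sums and the dihedral relabeling of the sectors, both of which the paper leaves implicit.
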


\begin{proof}
Suppose $\Lcal_4^*(\Tcal)<6$. By definition, $\Lcal_4^*(\Tcal)=d_1+d_2+d_3+d_4$ where $d_i$ is the non-negative integer $d_i=d^*(P_{i},P'_{i})$ as in \autoref{fig:islands}. Amongst all possible ways to write $\Lcal_4^*<6$ as the above sum, all satisfy the condition $d_i\leq 2$ and $d_{i+\varepsilon}\leq 1$ for some $i=1,2,3,4$ and $\varepsilon=\pm 1$. \autoref{prop:4section_2+1} implies that, for all such sums, either $\Tcal$ is c-reducible or $X\cong S^4$. 
\end{proof}

We are ready to show \autoref{thm:L>=6}.

\begin{proof}[Proof of \autoref{thm:L>=6}]
Suppose $\Lcal_4^*(\Tcal)<6$. By \autoref{thm:(2,1)_irreducible_L>=6}, $\Tcal$ is c-reducible. 
In particular, $\Tcal$ is the connected sum of two genus-one multisections. Theorem 5.1 of \cite{islambouli2022toric} implies that each summand is either diffeomorphic to $S^1\times S^3$ or $\#^iS^2\times S^2\#^j\mathbb{CP}^2\#^k \overline{\mathbb{CP}^2}$ for some integers $i,j,k$. 
\end{proof}

\subsection{Curves in genus two surfaces} \label{section:curves_in_genus_two}

Before discussing the proof of \autoref{thm:L=6}, we need to understand which curves $z$ are dual to a fixed pair of circles $\{x,y\}$ (see \autoref{prop:scc_in_genus_2}). 

Let $\Sigma $ be a genus two surface with curves $\gamma_1$, $\gamma_2$, and let $F_0$ the component of $\Sigma-\gamma_2$ containing $\gamma_1$. Let $F_1=\Sigma - F_0$ be the other one-holed torus with longitude and meridian $l$ and $m$, respectively. Fix $n\in \Z$ and denote by $x$ and $y$ the simple closed curves in $F_1$ of the form $x=nl+m$, $y=m$ (see \autoref{fig:curves}). 
A properly embedded arc $z\subset F_1$ is of type $(i,j)$ is $|z\cap x|=i$ and $|z\cap y|=j$. We think of arcs $z$ up to isotopies that move the endpoints of $z$. Such arcs are in one-to-one correspondence with simple closed curves in the closed torus $F_1/\partial F$. The following lemma classifies arcs of small type; the proof is left as an exercise to the reader. \autoref{fig:arcs_ij} summarizes the possible arcs. 


\begin{lemma}\label{lem:(1,1)_arcs}
Let $n\neq 0$ be an integer. 
\begin{itemize}[leftmargin=.625in]
\item[$(0,0)$] Every arc of type $(0,0)$ is parallel to $\partial F_1$. 
\item[$(1,1)$] If $|n|\geq 3$, then there is a unique isotopy class of arc, $z=l$, of type $(1,1)$. \\ If $|n|=1,2$, there are exactly two such arcs $z=l$ and $z=l+\frac{2}{n}m$.
\item[$(1,0)$] There are no arcs of type $(1,0)$ if $|n|\geq 2$. \\ If $|n|=1$, there is one arc $z=m$. 
\item[$(0,1)$] There are no arcs of type $(0,1)$ if $|n|\geq 2$. \\ If $|n|=1$, there is one arc $z=l+nm$.
\end{itemize}
\end{lemma}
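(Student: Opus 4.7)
The plan is to reduce the classification of arcs in $F_1$ to the classification of simple closed curves on a torus and then solve a small system of Diophantine equations. First I would invoke the standard correspondence: collapsing $\partial F_1$ to a point turns $F_1$ into a closed torus $T=F_1/\partial F_1$, and this collapse induces a bijection between isotopy classes of essential properly embedded arcs in $F_1$ (with endpoints free to move on $\partial F_1$) and isotopy classes of unoriented simple closed curves on $T$, under which boundary-parallel arcs correspond to the null-homotopic curve. Under this correspondence, the curves $x=nl+m$ and $y=m$ descend to homology classes $(n,1)$ and $(0,1)$ in $H_1(T;\Z)\cong \Z\langle l\rangle\oplus\Z\langle m\rangle$, and any isotopy class of simple closed curve on $T$ is represented uniquely (up to sign) by a primitive class $(p,q)$ with $\gcd(p,q)=1$.

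Next, I would use the formula for geometric intersection numbers on the torus: for primitive $(p,q)$ the geometric intersection with $(n,1)$ is $|p-nq|$ and the geometric intersection with $(0,1)$ is $|p|$. Thus classifying arcs of type $(i,j)$ amounts to listing primitive solutions (modulo sign) to the system
\[
|p-nq|=i, \qquad |p|=j,
\]
together with the possibility $(p,q)=(0,0)$ accounting for the boundary-parallel arc.

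Finally, I would check each prescribed $(i,j)$ directly. For $(0,0)$ the only solution is $(p,q)=(0,0)$, giving the boundary-parallel arc. For $(1,0)$ the system forces $p=0$ and $|nq|=1$, which has a (primitive) solution only when $|n|=1$, namely $z=m$. For $(0,1)$ we need $p=\pm 1$ and $nq=p$, again only possible when $|n|=1$, and one checks the unique solution is $z=l+nm$. For $(1,1)$, setting $p=\pm1$ gives $nq\in\{0,\pm 2\}$: the value $nq=0$ always produces $z=l$, while $nq=\pm 2$ produces a primitive class only when $|n|\in\{1,2\}$, yielding the additional arc $z=l+\tfrac{2}{n}m$ in those cases. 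Combining these cases yields the lemma.

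The only genuinely delicate point is making the correspondence between arcs in $F_1$ and curves on $T$ precise (in particular that geometric intersection numbers are preserved once one uses the descended homology classes for $x$ and $y$); this is standard but worth stating carefully. Once this is in hand, every remaining step is elementary number theory.
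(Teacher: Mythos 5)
Your proof is correct and follows exactly the approach the paper sets up immediately before the lemma: pass to the closed torus $T=F_1/\partial F_1$ via the arc--curve correspondence, compute geometric intersection numbers using the torus formula $\iota\bigl((p,q),(a,b)\bigr)=|pb-qa|$ applied to $x\mapsto(n,1)$ and $y\mapsto(0,1)$, and then enumerate primitive solutions of the resulting Diophantine systems. Since the paper leaves this as an exercise, your computation is precisely what is intended, and the one point you flag as delicate (that the collapse preserves minimal intersection numbers with curves avoiding the collapse point) is indeed the only nonroutine ingredient.
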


\begin{figure}[h]
\includegraphics[width=7cm]{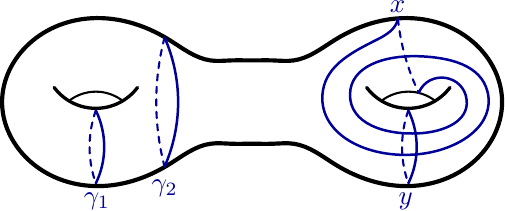}
\centering
\caption{Curves in $\Sigma$ in the setup of \autoref{prop:scc_in_genus_2}.}
\label{fig:curves}
\end{figure}

\begin{figure}[h]
\includegraphics[width=12cm]{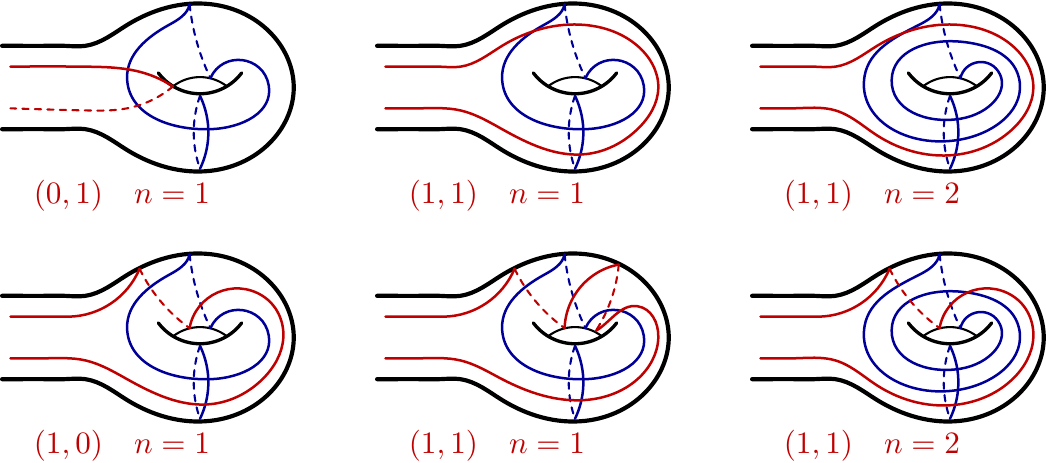}
\centering
\caption{Arcs of type $(i,j)$.}
\label{fig:arcs_ij}
\end{figure}

We now discuss simple closed curves dual to $x$ and $y$. The following result clusters such curves in three families: (0), (2), and (4).  \autoref{fig:curves_ij} gives examples of the arcs in families (2) and (4). 

\begin{proposition}\label{prop:scc_in_genus_2}
Let $\Sigma$, $\gamma_1$, $\gamma_2$, $x$, $y$ be as in \autoref{fig:curves}. Let $z\subset \Sigma$ be a simple closed curve such that $|z\cap x|=|z\cap y|=1$. Then, $|z\cap \gamma_2|\leq 4$ and the curve $z$ has one of the following descriptions:
\begin{itemize}[leftmargin=.5in]
\item[$(0)$] $z$ lies in $F_1$ as torus curve, 
\item[$(2)$] $z$ is obtained by gluing an essential arc in $F_0$ with an arc of type $(1,1)$ in $F_1$, or
\item[$(4)$] $z$ is the union of two parallel copies of an arc in $F_0$ with two arcs in $F_1$, one of type $(0,1)$ and the other of type $(1,0)$. 
\end{itemize}
\end{proposition}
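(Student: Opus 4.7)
The strategy is to analyze $|z \cap \gamma_2|$ after isotoping $z$ to minimize its transverse intersection with $\gamma_2$. Since $\gamma_2$ is separating in $\Sigma$, this count $|z \cap \gamma_2| = 2k$ is even. If $k = 0$, then $z$ lies entirely in $F_0$ or $F_1$; because $z$ meets $x \subset F_1$, we must have $z \subset F_1$, yielding case $(0)$. Henceforth assume $k \geq 1$, so $z$ cuts into $k$ properly embedded arcs in each of $F_0$ and $F_1$.

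By a standard innermost-bigon argument invoking the minimality of $|z \cap \gamma_2|$, no arc of $z \cap F_1$ nor of $z \cap F_0$ can be boundary parallel in its respective piece: such a bigon between $z$ and $\gamma_2$ would permit an isotopy of $z$ reducing $|z \cap \gamma_2|$ by two. Hence each arc of $z \cap F_1$ has type $(i_s, j_s)$ with $i_s + j_s \geq 1$ by \autoref{lem:(1,1)_arcs}, while $\sum_s i_s = |z \cap x| = 1$ and $\sum_s j_s = |z \cap y| = 1$. This forces $k \leq 2$, establishing $|z \cap \gamma_2| \leq 4$. When $k = 1$, the unique $F_1$-arc has type $(1,1)$ and the unique $F_0$-arc is essential, producing case $(2)$.

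For $k = 2$, the two $F_1$-arcs must be one of type $(1,0)$ and one of type $(0,1)$, which is the only way to realize total type $(1,1)$ using essential arcs, and this forces $|n| = 1$ by \autoref{lem:(1,1)_arcs}. Label the four points of $z \cap \gamma_2$ cyclically as $p_1, p_2, p_3, p_4$ on $\gamma_2$. Disjointness of the two $F_1$-arcs forces a non-crossing pairing which we may assume, after relabeling, to be $\{p_1 p_2,\, p_3 p_4\}$. For $z$ to be a single simple closed curve rather than two disjoint curves, the two $F_0$-arcs $\beta_{23}$ and $\beta_{41}$ must realize the other non-crossing pairing $\{p_2 p_3,\, p_4 p_1\}$, so that the union yields a single $4$-cycle $p_1 p_2 p_3 p_4$.

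It remains to prove that $\beta_{23}$ and $\beta_{41}$ are parallel in $F_0$. Since $\beta_{23}$ is essential in the one-holed torus $F_0$, it is non-separating, and cutting $F_0$ along $\beta_{23}$ produces an annulus $A$. Each of the two boundary circles of $A$ is the concatenation of one copy of $\beta_{23}$ with one arc of $\gamma_2 \setminus \{p_2, p_3\}$; because the cyclic order puts both $p_1$ and $p_4$ on the same component of $\gamma_2 \setminus \{p_2, p_3\}$, the endpoints of $\beta_{41}$ both lie on a single boundary circle of $A$. Any properly embedded arc in an annulus with both endpoints on one boundary circle is boundary parallel, so $\beta_{41}$ cobounds a disk in $A$ with some sub-arc $\sigma$ of that circle. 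If $\sigma$ is the sub-arc of $\gamma_2$ running directly from $p_4$ to $p_1$, then $\beta_{41}$ would be boundary parallel in $F_0$, contradicting essentiality; thus $\sigma$ is the complementary sub-arc, which traverses the copy of $\beta_{23}$, and after reassembling $F_0$ this disk becomes a rectangle whose boundary alternates $\beta_{41}$, $\gamma_2$-arc, $\beta_{23}$, $\gamma_2$-arc, exhibiting $\beta_{41}$ as parallel to $\beta_{23}$. This is case $(4)$. The main technical obstacle lies precisely here: correctly tracking the distribution of the $p_i$ over the two boundary circles of $A$ and ruling out the spurious boundary-parallelism option for $\beta_{41}$.
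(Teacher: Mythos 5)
The bound $|z\cap\gamma_2|\le 4$, the reduction to $k\le 2$ via the type counts $\sum_s i_s=\sum_s j_s=1$, and the treatments of $k=0$ and $k=1$ are all fine and match the paper. But in the $k=2$ case your proof contains a concrete error whose repair is exactly the content of the paper's argument.

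You assert that ``disjointness of the two $F_1$-arcs forces a non-crossing pairing'' of the four points on $\gamma_2$, assigning $\{p_1p_2,\,p_3p_4\}$ to the $F_1$-arcs. This is false: in a one-holed torus, two disjoint, essential, non-parallel arcs must have \emph{linked} endpoints on the boundary circle. The non-crossing intuition holds for arcs in a disk, not for arcs in a one-holed torus. Indeed, your own annulus argument (which you use later for $\beta_{23}$ and $\beta_{41}$) proves this: if the endpoints of two disjoint essential arcs $a,b$ in a one-holed torus were unlinked, then after cutting along $a$ both endpoints of $b$ land on a single boundary circle of the resulting annulus, so $b$ is either boundary-parallel (contradicting essentiality) or parallel to $a$. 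Since your two $F_1$-arcs have distinct types $(1,0)$ and $(0,1)$ (slopes $m$ and $l\pm m$), they are not parallel, and so their endpoints must be linked. Put differently, your write-up is internally inconsistent: if both the $F_1$-pairing and the $F_0$-pairing were non-crossing as you claim, then your own last-paragraph argument applied to $F_1$ would force the two $F_1$-arcs to be parallel, contradicting the fact that they have different types.

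There is a second, subordinate gap: even granting the (false) non-crossing $F_1$-pairing $\{p_1p_2,p_3p_4\}$, you claim the $F_0$-arcs ``must realize the other non-crossing pairing'' to make $z$ connected. That does not follow: the crossing pairing $\{p_1p_3,p_2p_4\}$ also yields a single $4$-cycle. What is true, and what the paper uses, is that the $F_0$-pairing must merely be \emph{different} from the $F_1$-pairing (otherwise $z$ splits into two circles). Once you know the $F_1$-arcs have the linked pairing, say $\{p_1p_3,p_2p_4\}$, the only pairings giving a connected $z$ are the two unlinked ones, and then your annulus argument applies verbatim to show the $F_0$-arcs are parallel. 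So the correct sequence is: the $F_1$-arcs have linked endpoints (forced by their distinct types); hence for $z$ to be connected the $F_0$-arcs must have unlinked endpoints; unlinked plus disjoint plus essential forces parallel. This is precisely the paper's argument, and your final paragraph (the parallelism rectangle) is the right tool applied in the right place---it is only the pairing bookkeeping in the preceding paragraph that needs to be swapped.
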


\begin{proof}
Suppose $z$ intersects $\gamma_2$ minimally so the arcs $z\cap F_i$ are essential in $F_i$. By \autoref{lem:(1,1)_arcs}, the type of each arc in $z\cap F_1$ is not $(0,0)$. Thus, since $z$ is dual to both $x$ and $y$, either $z\cap F_1$ consists in one arc of type $(1,1)$ or two arcs (of types $(0,1)$ and $(1,0)$). The former option yields the second conclusion. 
Suppose there is one arc of type $(0,1)$ and another of type $(1,0)$. By \autoref{lem:(1,1)_arcs}, $n=\pm 1$. Observe that the endpoints of such arcs are linked in $\gamma_2=\partial F_1$. In order to make $z$ connected, the endpoints of the two arcs of $z\cap F_0$ must be unlinked in $\gamma_2$. This condition forces them to be parallel in $F_0$. Hence (4) holds. 
\end{proof}

\begin{figure}[h]
\centering
\includegraphics[width=12cm]{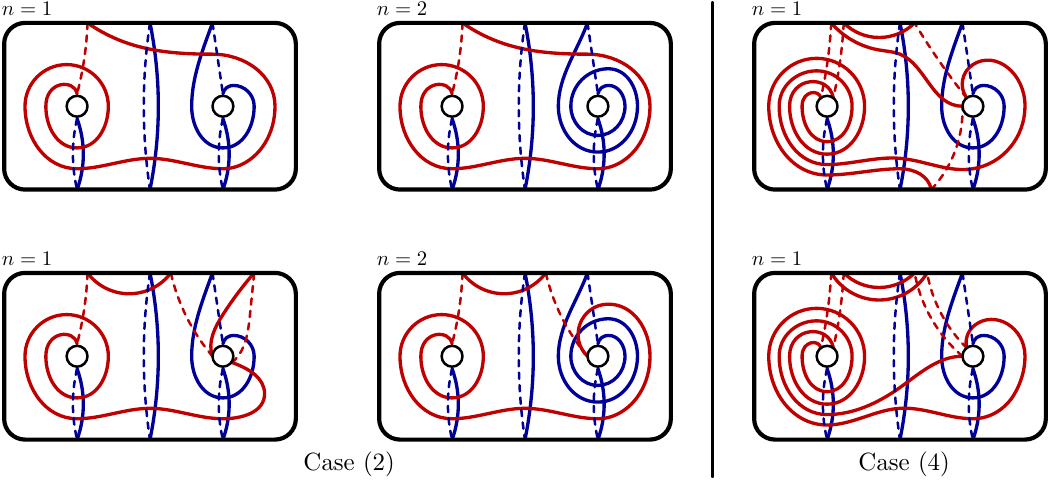}
\caption{Examples of curves $z$ (red) dual to both $x$ and $y$.}
\label{fig:curves_ij}
\end{figure} 

If we think of $\Sigma$ embedded in $S^3$ as in \autoref{fig:curves_ij}, curves of type (2) for distinct arcs of type $(1,1)$ are isotopic in $S^3$. That being said, their surface framings differ by $\pm 1$. 

\begin{remark}\label{remark:family(4)}
Let $z\subset \Sigma$ be a curve in the family (4) and denote by $c$ the isotopy class of the arcs $z\cap F_0$. In the torus $F_0$, $r=|c\cap \gamma_1|$ is equal to the algebraic intersection of $c$ and $\gamma_1$. One can see from \autoref{fig:curves_ij} that $|z \cap \gamma_1|=|z\cdot \gamma_1|=2r$. 
\end{remark}

\subsection{Multisections with \texorpdfstring{$\Lcal_4^*(\Tcal)=6$}{L*(T)=6}}\label{sec: main theorem 2}

\setcounter{theorem}{2}
We now turn to the proof of \autoref{thm:L=6}.

\begin{proof}[Proof of \autoref{thm:L=6}]
Let $\Tcal$ be a $(2,1)$-quadrisection $X=X_1\cup X_2\cup X_3\cup X_4$ with $\Lcal_4^*(\Tcal)=6$.
By the definition of $\Lcal_4^*(\Tcal)$, there exist efficient pairs $(P,P')$, $(Q,Q')$, $(R,R')$, and $(S,S')$ of $(H_1,H_2), (H_3,H_2), (H_3,H_4)$ and $(H_1,H_4)$ respectively, such that $d_1+d_2+d_3+d_4=6$, where $d_1=d^*(S,P)$, $d_2=d^*(P',Q')$, $d_3=d^*(Q,R)$, and $d_4=d^*(R',S')$. We depict this in \autoref{fig:islands_L6}(a). By \autoref{lem:efficient}, $d^*(P,P')=d^*(Q,Q')=d^*(R,R')=d^*(S,S')=1$.

Amongst all possible ways to write six as the above sum, all but one satisfy the condition $d_i\leq 2$ and $d_{i+\varepsilon}\leq 1$ for some $i=1,2,3,4$ and $\varepsilon=\pm 1$. \autoref{prop:4section_2+1} implies that, for all such sums, either $\Tcal$ is c-reducible or $X\cong S^4$. 
The missing sum is $6=3+0+3+0$ which corresponds, without loss of generality, to $d_1=d_3=0$ and $d_2=d_4=3$. We use the notation at the beginning of this section and then obtain that $d(P',Q')=3=d(R',S')$, $P=S=\{g,\psi_1,\psi_2\}$, and $R=Q=\{f,\gamma_1,\gamma_2\}$ as shown in \autoref{fig:islands_L6}(b).

\begin{figure}[h]
\includegraphics[width=\textwidth]{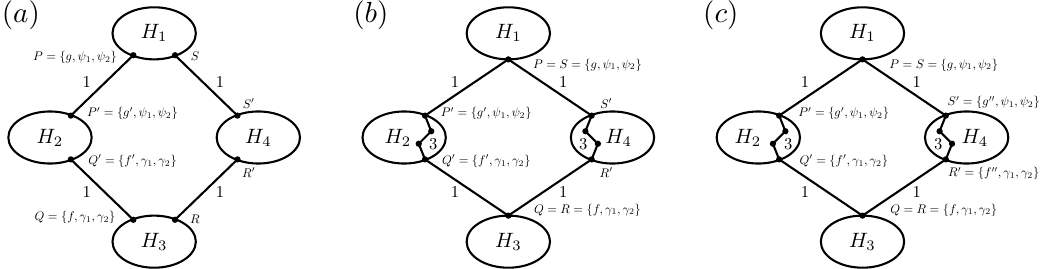}
\centering
\caption{Efficient defining pairs computing $\Lcal_4^*(\Tcal)=6$.}
\label{fig:islands_L6}
\end{figure}

We now divide the proof into three steps.
\begin{framed}
\begin{itemize}[label={},leftmargin=*]
  \item \textbf{Step 1}: Determine $R'=\{f'',r_1,r_2\}$ and $S'=\{g'',\psi_1,\psi_2\}$.
  \item \textbf{Step 2}: Focus on the paths between $(P',Q')$ and $(S',R')$, and claim the following.
  \begin{enumerate}[leftmargin=.9in]
      \item[\autoref{claim:pathmu}.] Either $f'=g'$ or the path between $P'$ and $Q'$ has the following form
\begin{equation*}
P'= \{g',\psi_1,\psi_2\} \mapsto \{g',\psi_1,f'\} \mapsto \{g',\gamma_1,f'\} \mapsto \{\gamma_2,\gamma_1,f'\}=Q'.
\end{equation*}
     \item[\autoref{claim:pathmu'}.] Either $f''=g''$ or the path between $S'$ and $R'$ has the following form
\begin{equation*}
S'= \{g'',\psi_1,\psi_2\} \mapsto \{g'',\psi_1,f''\} \mapsto \{g'',\gamma_1,f''\} \mapsto \{\gamma_2,\gamma_1,f''\}=R'.
\end{equation*}
\end{enumerate}
    \item \textbf{Step 3}: By the previous step, we have the following four cases:
    \begin{itemize}[leftmargin=.9in]
\item[Case (i)] $f'\neq g'$ and $f''=g''$
\item[Case (ii)]$f'\neq g'$ and $f''\neq g''$
\item[Case (iii)] $f'=g'$ and $f''=g''$
\item[Case (iv)]$f'=g'$ and $f''\neq g''$
\end{itemize} However, since (i) and (iv) are symmetric, we only consider (i), (ii), and (iii).
\end{itemize}
\end{framed}

\textbf{Step 1}: Consider the geodesic between the pair $R$ and $R'$. If $\gamma_1\not \in R'$, then $R'=\{f,\gamma_1',\gamma_2\}$ where $|\gamma_1'\cap \gamma_1|=1$. In particular, $Y_{24}=H_2\cup_\Sigma \overline{H_4}$ has a Heegaard diagram given by $\left(\Sigma;\{f',\gamma_1\},\{f,\gamma_1'\}\right)$. The latter is the standard genus two Heegaard splitting for $S^3$, thus weakly reducible. \autoref{lem:Y_is_S3} implies that $X\cong S^4$. Therefore, $R'=\{f'',\gamma_1,\gamma_2\}$, where $f''$ is a simple closed curve on $\Sigma$ such that $|f\cap f''|=1$. By the same argument, $S'=\{g'',\psi_1,\psi_2\}$, where $g''$ is a simple closed curve on $\Sigma$ such that $|f\cap g''|=1$. We are left with the situation depicted in \autoref{fig:islands_L6}(c).

By construction, the curves $f$,$f'$, and $f''$ all lie in the same one-holed torus component of $\Sigma-\gamma_2$. Given that $f$ is dual to both $f'$ and $f''$, we obtain that $|f'\cdot f''|=|f'\cap f''|=n$ for some $n\geq 0$. After a surface diffeomorphism, we can draw these curves as in \autoref{fig:curves}. By the same reasons, $|g'\cdot g''|=|g'\cap g''|=m$ for some $m\geq 0$.

Here we claim that $n=m$. The 3-manifold $Y_{24}=H_2\cup_\Sigma \overline{H_4}$ admits a genus two Heegaard splitting with two distinct Heegaard diagrams, namely $\left(\Sigma;\{f',\gamma_1\},\{f'',\gamma_1\}\right)$, and $\left(\Sigma;\{g',\psi_1\},\{g'',\psi_1\}\right)$. Thus $Y_{24}$ is a connected sum of $S^1\times S^2$ with a lens space with fundamental group $\Z/n\Z \cong \Z/m\Z$. Hence, $n=m$.

\vspace{1em}
\textbf{Step 2}: We now focus on the paths between $(P',Q')$ and $(S',R')$. We prove \autoref{claim:pathmu} and \autoref{claim:pathmu'}.

\begin{shaded}
\begin{claim}\label{claim:pathmu}
Either $f'=g'$ or the path between $P'$ and $Q'$ has the following form
\begin{equation} \tag{1} \label{eq:pathmu}
P'= \{g',\psi_1,\psi_2\} \mapsto \{g',\psi_1,f'\} \mapsto \{g',\gamma_1,f'\} \mapsto \{\gamma_2,\gamma_1,f'\}=Q'.
\end{equation}
\end{claim}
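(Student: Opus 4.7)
Assume $f'\ne g'$. My plan is to classify any length-$3$ geodesic from $P'$ to $Q'$ in $\mathcal{C}^*(\Sigma)$ and show that it must take the stated form.

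First, I would observe that $d^*(P',Q')=3$ forces $P'\cap Q'=\emptyset$: sharing any curve with $Q'$ would allow one to realize the distance in at most two steps. Hence $\psi_1\ne\gamma_1$, $\psi_2\ne\gamma_2$, and $g'\ne f'$ by assumption, and any length-$3$ geodesic replaces each curve of $P'$ exactly once, with the replacement lying in $Q'$; no extraneous fourth curve can appear, because a detour curve would need one extra move to be removed, pushing the length past three.

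Next I would determine the order and assignment of the three substitutions. My central tool is the classical fact that two disjoint separating simple closed curves in $\Sigma_2$ are isotopic, so the distinct separating curves $\psi_2$ and $\gamma_2$ cannot simultaneously appear in any intermediate pants decomposition. This obstruction forces the first move to remove $\psi_2$ and the last move to introduce $\gamma_2$. Among the candidates for the first move, I would rule out $\psi_2\mapsto\gamma_1$: the resulting triple $\{g',\psi_1,\gamma_1\}$ consists of three non-separating curves, and completing it to $Q'$ would require two more single-curve moves that introduce both $f'$ and $\gamma_2$ while removing both $g'$ and $\psi_1$, and one checks there is no ordering avoiding the disjoint-separating-curves obstruction. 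This leaves $\psi_2\mapsto f'$ as the unique valid first move, producing the intermediate $\{g',\psi_1,f'\}$. A symmetric argument starting from $Q'$ forces the last move to be $g'\mapsto\gamma_2$ with penultimate intermediate $\{g',\gamma_1,f'\}$, so the middle substitution is forced to be $\psi_1\mapsto\gamma_1$, yielding (\ref{eq:pathmu}).

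The main obstacle I expect is eliminating the visually plausible alternative path
\[ P' = \{g',\psi_1,\psi_2\} \,\mapsto\, \{g',\psi_1,\gamma_2\} \,\mapsto\, \{g',\gamma_1,\gamma_2\} \,\mapsto\, \{f',\gamma_1,\gamma_2\} = Q', \]
in which each edge changes exactly one curve. Ruling this out requires showing the first edge is not available in our setting, presumably by exploiting the Heegaard diagram data for $Y_{24}=H_2\cup_\Sigma \overline{H_4}$ in the spirit of \autoref{prop:4section_2+1}: if $\{g',\psi_1,\gamma_2\}$ were a valid pants decomposition, then the disjointness $\gamma_2\cap\psi_1=\gamma_2\cap g'=\emptyset$ together with the cut systems already in play would produce a weakly reducible Heegaard splitting of $Y_{24}$ with trivial first homology, forcing $X\cong S^4$ by \autoref{lem:Y_is_S3}, or else collapsing to the case $f'=g'$. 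Carrying this step out carefully, using the one-holed torus structures cut off by $\psi_2$ and $\gamma_2$ and the fact $|g\cap g'|=|f\cap f'|=1$, will be the most delicate part of the argument.
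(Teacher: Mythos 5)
There is a gap at the very first step. You assert that $d^*(P',Q')=3$ forces $P'\cap Q'=\emptyset$ on the grounds that a shared curve would cap the distance at two, but that reasoning is only valid when the shared curve is separating: in that case $\Sigma-x$ is two one-holed tori and the two remaining replacements are independent. If instead $P'$ and $Q'$ share a \emph{non-separating} curve $x$, then $\Sigma-x$ is a twice-holed torus, and the distance between the residual two-curve pants decompositions of $\Sigma_{1,2}$ can certainly exceed two (the dual curve graph of $\Sigma_{1,2}$ has infinite diameter). The paper therefore treats this as its Case~1: when $P'\cap Q'$ is a single non-separating curve, i.e.\ one of $\psi_1=\gamma_1$, $\psi_1=f'$, $g'=\gamma_1$, or $g'=f'$, the first three lead to c-reducibility via \autoref{lem:stabilization} and the fourth is exactly the $f'=g'$ alternative in the claim. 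Your argument omits all of this.

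Second, your analysis of the length-three path in the disjoint case is incomplete, and you have misidentified where the hard work lies. The ``visually plausible alternative path'' beginning with $\psi_2\mapsto\gamma_2$ is in fact ruled out by elementary surface topology and needs no Heegaard data: a separating curve disjoint from both $g'$ and $\psi_1$ must separate the four-holed sphere $\Sigma\setminus(g'\cup\psi_1)$ with both $g'$-boundaries on one side, and there is exactly one such isotopy class, namely $\psi_2$; so $\gamma_2=\psi_2$, a contradiction. Conversely, the step you dismiss with ``one checks there is no ordering avoiding the disjoint-separating-curves obstruction'' is exactly where that check fails. After $\psi_2\mapsto\gamma_1$, the remaining moves \emph{can} be ordered so that $\psi_2$ and $\gamma_2$ never coexist (e.g.\ $g'\mapsto\gamma_2$ then $\psi_1\mapsto f'$, or $\psi_1\mapsto\gamma_2$ then $g'\mapsto f'$); these are the paper's Subcases~2a and~2b, and eliminating them requires the homological computation in $H_1(\Sigma)$ showing the linking matrix of $Y_{13}=H_1\cup_\Sigma\overline{H_3}$ has determinant $\pm 1$, that this Heegaard splitting is weakly reducible, and then \autoref{lem:Y_is_S3} to conclude $X\cong S^4$. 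You correctly sense that the Heegaard splitting structure must enter, but you have it entering at the wrong step and for the wrong reason.
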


\begin{proof}[Proof of \autoref{claim:pathmu}]
Let $\mu$ be the path of length three connecting $P'$ and $Q'$. We have two possibilities: (1) one curve of $P'$ is fixed along $\mu$, or (2) each curve of $P'$ moves once along $\mu$. In what follows, we will reduce each case to one possibility (Subcases 1c and 2c below). 

\textit{Case 1:} One curve of $P'$ is fixed along $\mu$. 
Equivalently, $P'\cap Q'$ is one curve. As we saw in the proof of \autoref{prop:4section_2+1}, the separating curves $\gamma_2$ and $\psi_2$ must move along $\mu$; i.e., $\{\psi_1,g'\}\cap \{\gamma_1,f'\}=\{x\}$. 

\indent \indent \textit{Subcase 1a:} $\gamma_1=\psi_1$. Here, $\gamma_1$ bounds a disk in $H_1$, $H_2$, and $H_3$. On the other hand, since $\gamma_1\in R'$, $\gamma_1$ bounds a disk in $H_4$ and so $\Tcal$ is c-reducible by \autoref{lem:stabilization}. 

\indent \indent \textit{Subcase 1b:} $\gamma_1=g'$. (This case is analogous to $f'=\psi_1$.) 
Since $\gamma_1$ is fixed ($\gamma_1\in R\cap R'$), then $\gamma_1$ bounds a compressing disk in $H_2$, $H_3$, and $H_4$. Moreover, since $\gamma_1=g'$, $\gamma_1$ is dual to a compressing disk ($g$) in $H_1$. Hence, $\Tcal$ is c-reducible by \autoref{lem:stabilization}. %

\indent \indent \textit{Subcase 1c:} $g'=f'$. This is one of the conclusions.

\textit{Case 2}: Each curve in $P'$ moves once along $\mu$. 
Equivalently, each curve in $Q'$ moves exactly once. We think of $\mu$ as a path $\mu:P'\rightarrow Q'$. 
Notice that all the curves involved in $\mu$ bound compressing disks in $H_2$ and only $\gamma_2$ and $\psi_2$ are separating. Then, one can see that $\psi_2\mapsto x$ and $y\mapsto \gamma_2$ must be the first and third dual moves in $\mu$, where $x\in \{f',\gamma_1\}$ and $y\in \{g',\psi_1\}$. We now discuss the possibilities for $x$ and $y$. 

\indent \indent \textit{Subcase 2a:} $x=\gamma_1$ and $y=g'$. (This case is analogous to $x=f'$ and $y=\psi_1$.) After the dual move $\psi_2\mapsto \gamma_1$, we are left three non-separating curves $\gamma_1$, $\psi_1$, and $g'$. After choosing the correct orientations of these curves, we obtain that $[\gamma_1]=[\psi_1]+[g']$ in $H_1(\Sigma,\Z)$. In particular, $\gamma_1\cdot g =1$. Now, notice that the second move must be $\psi_1\mapsto f'$, leaving us with three non-separating circles in $\Sigma$: $\gamma_1$, $f'$, and $g'$. Thus, $[f']=[\gamma_1]-[g']=[\gamma_1]$ in $H_1(\Sigma,\Z)$ which forces $\gamma_1\cdot f=1$. Hence, the 3-manifold $Y_{13}=H_1\cup_\Sigma \overline{H_3}$ is weakly reducible ($\gamma_1\cap \psi_1=\emptyset$) and its first homology group over $\Z$ is trivial as 
\[
\begin{pmatrix}
\gamma_1\cdot \psi_1 & f\cdot \psi_1 \\
\gamma_1\cdot g & f\cdot g 
\end{pmatrix}
=
\begin{pmatrix}
0 & 1 \\
1 & * 
\end{pmatrix}.
\]
By \autoref{lem:Y_is_S3}, we obtain that $X\cong S^4$. 

\indent \indent \textit{Subcase 2b:} $x=\gamma_1$ and $y=\psi_1$. 
As in Subcase 2a, $[\gamma_1]=[\psi_1]+[g']$ in $H_1(\Sigma,\Z)$. Since $\psi_1\mapsto \gamma_2$ is the last dual move in $\mu$, we also obtain that $[\psi_1]=\pm [\gamma_1]+[f']$. Hence, $\psi_1\cdot f = 1$, $\gamma_1\cdot g=1$, and $\gamma_1\cap \psi_1=\emptyset$. Thus, by \autoref{lem:Y_is_S3}, $X\cong S^4$. 

\indent \indent \textit{Subcase 2c:} $x=f'$ and $y=g'$. This is the second option in the conclusion. This ends the proof of the claim. 
\end{proof}
\end{shaded}

\begin{shaded}
\begin{claim}\label{claim:pathmu'}
Either $f''=g''$ or the path between $S'$ and $R'$ has the following form
\begin{equation}\tag{2}\label{eq:pathmu'}
S'= \{g'',\psi_1,\psi_2\} \mapsto \{g'',\psi_1,f''\} \mapsto \{g'',\gamma_1,f''\} \mapsto \{\gamma_2,\gamma_1,f''\}=R'.
\end{equation}
\end{claim}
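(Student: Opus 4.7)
The plan is to mirror the proof of \autoref{claim:pathmu} verbatim, exchanging the roles of the handlebodies $H_2$ and $H_4$. Concretely, $S'=\{g'',\psi_1,\psi_2\}$ and $R'=\{f'',\gamma_1,\gamma_2\}$ both consist of curves bounding compressing disks in $H_4$, so the length-three path $\mu'$ realizing $d^*(S',R')=3$ lives entirely among pants decompositions whose curves bound compressing disks in $H_4$. As in the previous claim, the separating curves $\psi_2$ and $\gamma_2$ must each move at some point along $\mu'$, and by \autoref{lem:efficient} only compressing curves are involved.

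The case split is identical: either exactly one curve is fixed along $\mu'$ (so $|S'\cap R'|=1$) or each of $g'',\psi_1,\psi_2$ moves exactly once. In Case~1, the three subcases $\gamma_1=\psi_1$, $\gamma_1=g''$ (equivalently $f''=\psi_1$), and $f''=g''$ are handled respectively by invoking \autoref{lem:stabilization} (the common curve together with its dual compresses in every tangle except one, where the dual compresses) and by reading off the conclusion. The only place where the roles of $H_2$ and $H_4$ matter is that for Subcase~1a one observes that $\gamma_1\in R\cap R'$ and $\gamma_1=\psi_1\in P\cap P'$ gives compressing disks in $H_1,H_2,H_3,H_4$, while in Subcase~1b one uses that $\gamma_1=g''$ means $\gamma_1$ is dual to the compressing disk $g\subset H_1$ and bounds compressing disks in $H_2,H_3,H_4$.

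In Case~2, the first move of $\mu'$ is $\psi_2\mapsto x$ with $x\in\{f'',\gamma_1\}$ and the last is $y\mapsto \gamma_2$ with $y\in\{g'',\psi_1\}$. In Subcases~2a ($x=\gamma_1$, $y=g''$, or its mirror) and 2b ($x=\gamma_1$, $y=\psi_1$), one tracks the homology classes: after orienting curves, the middle pants decomposition forces relations $[\gamma_1]=[\psi_1]+[g'']$ and $[f'']=\pm[\gamma_1]\pm[g'']$, yielding intersection numbers $\gamma_1\cdot g=1$ and $\psi_1\cdot f=1$ (or $\gamma_1\cdot f=1$) in $\Sigma$. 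Combined with $\gamma_1\cap\psi_1=\emptyset$, this shows that the genus two Heegaard splitting $Y_{13}=H_1\cup_\Sigma \overline{H_3}$ (with diagram $(\Sigma;\{g,\psi_1\},\{f,\gamma_1\})$) is weakly reducible and has $H_1(Y_{13},\mathbb{Z})=0$, so by \autoref{lem:Y_is_S3} we conclude $X\cong S^4$, contradicting the hypothesis $\mathcal{L}_4^*(\mathcal{T})=6$ (or simply placing us in a degenerate case that is already handled). Subcase~2c, $x=f''$ and $y=g''$, is exactly the path \eqref{eq:pathmu'} in the conclusion.

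The main (and only) obstacle is bookkeeping: one must check that the symmetry between the pair $(P,P',Q,Q')$ sitting in $H_2$ and the pair $(S,S',R,R')$ sitting in $H_4$ really is perfect, i.e., that in all appeals to \autoref{lem:stabilization} and to \autoref{lem:Y_is_S3} the relevant handlebody indices line up. Since $P=S$, $Q=R$, and the $H_4$-side plays the exact formal role that $H_2$ did, the translation is mechanical, and no new geometric input is needed beyond \autoref{lem:Y_is_S3} and \autoref{lem:stabilization}.
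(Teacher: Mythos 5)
Your proposal is correct and takes essentially the same approach as the paper: the paper's own proof of this claim is the one-line ``Similar to \autoref{claim:pathmu},'' and you correctly spell out the mirror argument obtained by swapping the roles of $H_2$ and $H_4$ (primes for double-primes), noting in particular that the appeals to \autoref{lem:stabilization} and \autoref{lem:Y_is_S3} still concern the splitting $Y_{13}=H_1\cup_\Sigma\overline{H_3}$ since both claims manipulate cut systems for $H_1$ and $H_3$.
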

\begin{proof}[Proof of \autoref{claim:pathmu'}]
Similar to \autoref{claim:pathmu}.
\end{proof}
\end{shaded}

\textbf{Step 3}:
By \autoref{claim:pathmu} and \autoref{claim:pathmu'}, we now consider the following four cases.
\begin{itemize}[leftmargin=1in,noitemsep]
\item[Case (i)] $f'\neq g'$ and $f''=g''$
\item[Case (ii)]$f'\neq g'$ and $f''\neq g''$
\item[Case (iii)] $f'=g'$ and $f''=g''$
\item[Case (iv)]$f'=g'$ and $f''\neq g''$
\end{itemize}
However, it suffices to consider Cases (i), (ii), and (iii), because (i) and (iv) are symmetric.

\textit{Cases (i) and (ii):}
Assume $f' \neq g'$.
By \autoref{claim:pathmu}, the path between $P'$ and $Q'$ is like \autoref{eq:pathmu}. We focus on the dual move $\psi_1\mapsto \gamma_1$. This occurs in a 4-holed sphere with boundaries corresponding to copies of $f'$ and $g'$ (see \autoref{fig:curves_2}). One can observe that, in this situation ($\psi_1\cap \gamma_1\neq \emptyset$), $\Sigma-(\psi_1\cup \gamma_1)$ is the disjoint union of some polygonal disks and exactly two annuli with cores isotopic to $f'$ and $g'$. This forces any curve $x$ disjoint from $\psi_1\cup \gamma_1$ to be isotopic to one of $\{g',f'\}$. 
Now, by \autoref{claim:pathmu'}, either $g''=f''$ or the path between $S'$ and $R'$ has the following form: 
\[
S'= \{g'',\psi_1,\psi_2\} \mapsto \{g'',\psi_1,f''\} \mapsto \{g'',\gamma_1,f''\} \mapsto \{\gamma_2,\gamma_1,f''\}=R'.
\]
In either case, $f''$ and $g''$ are disjoint from both $\psi_1$ and $\gamma_1$. By the previous paragraph, $\{f'',g''\}$ can be made disjoint from $\{f',g'\}$. Since $\{f',f''\}$ (resp. $\{g',g''\}$) lie in the same one-holed torus of $\Sigma-\gamma_2$ (resp. $\Sigma-\psi_2$), we get that $f'=f''$ (resp. $g'=g''$). 
In particular, $P'=S'=\{g',\psi_1,\psi_2\}$, $Q'=R'=\{f',\gamma_1,\gamma_2\}$, and so the handlebodies $H_2$ and $H_4$ are equal. Thus, $Y_{13}=H_1\cup\overline{H_3}$ cuts $X$ into two copies of the same (bisected) 4-manifold $X_1\cup X_2$. Moreover, since $P=S$ and $Q=R$, the gluing map is the identity in $Y_{13}$, so $X=D(X_1\cup X_2)$ is the double of $X_1\cup X_2$.

\begin{figure}[h]
\centering
\includegraphics[width=7cm]{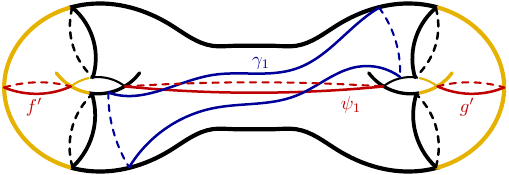}
\caption{A dual move with only non-separating circles.}
\label{fig:curves_2}
\end{figure}

We end this case by drawing a Kirby diagram for $X_1\cup X_2$ and $X$. Recall that $\{g,\psi_1\}$, $\{g',\psi_1\}$, $\{f,\gamma_1\}$ are cut systems for $H_1$, $H_2$, and $H_3$, respectively, $|g\cap g'|=1$, and $f$ is dual to some compressing curve, $g'$, of $H_2$. We use red, blue and green color for $\{g,\psi_1\}$, $\{g',\psi_1\}$ and $\{f,\gamma_1\},$ respectively (see \autoref{fig:abcdef}(a).) By Proposition 3.4 of \cite{islambouli2020multisections}, $X_1\cup X_2$ admits a Kirby diagram with one 0-handle, two dotted 1-handles given by $\psi_1$ and $g$, and 2-handles along $g'$ and $f$ with framing given by $\Sigma$ (see \autoref{fig:abcdef}(b).) 
We cancel the 1/2-pair of handles $\{g,g'\}$ and add a 0-framed unknot linking once around $f$ to obtain a Kirby diagram for $X$ as in \autoref{fig:abcdef}(c). Here, we slide $f$ over the 0-framed unknot to change the crossings and framing of $f$ as needed. 
If $p=|lk(f,\psi_1)|\neq0$, then we can make $f$ to be a $(p,1)$-torus knot as in \autoref{fig:abcdef}(d). This is the Kirby diagram for the spin of a lens space or the twisted spin of a lens space, depending on the framing of $f$. 
If $|lk(f,\psi_1)|=0$, then $f$ and $\psi_1$ can be slid away from each other. Here, we obtain a Kirby diagram for $S^1\times S^3 \# S^2\times S^2$ or $S^1\times S^3\# S^2\widetilde{\times} S^2$, depending on the framing of $f$. 

\begin{figure}[h]
\centering
\includegraphics[width=10cm]{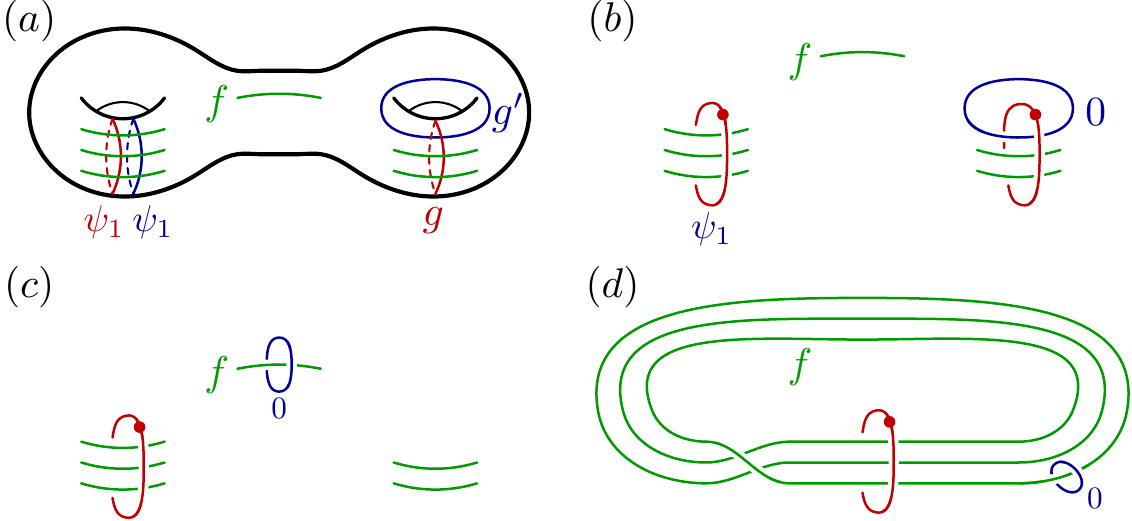}
\caption{(a)-(c) Process of obtaining a Kirby diagram from a multisection diagram. (d) A Kirby diagram of the spin of a lens space or twist-spin of a lens space.}
\label{fig:abcdef}
\end{figure}

\textit{Case (iii):} 
Suppose that $f'=g'$ and $f''=g''$. 
Recall that $\Tcal$ is a $(2,1)$-quadrisection with $\{\gamma_1,f''\}$, $\{\gamma_1,f\}$, $\{\gamma_1,f'\}$, and $\{\psi_1,g\}$ the cut systems of $H_4, H_3, H_2$, and $H_1$, respectively. We also know that $f$ is dual to $f''$, $f'$ is dual to $f$ and $g$ is dual to $f'$. We use red, blue, green, and orange colors for $\{\gamma_1,f''\},\{\gamma_1,f\},\{\gamma_1,f'\}$, and $\{\psi_1,g\}$, respectively. (See \autoref{fig:abcdef4} (a).) 
We use this information to draw a Kirby diagram $\mc{K}$ for $X$ (see Proposition 3.4 of \cite{islambouli2020multisections} and Lemma 4.6 of \cite{MSZ_Classifying}). 
The 0-handle and 1-handles are determined by a neighborhood of $H_4$, the 2-handles are induced by $X_4$, $X_3$, and $X_2$, and the 3-handles and 4-handle are determined by $X_1$. With this in mind, $\mc{K}$ is given as follows: a single $0$-handle, two red dotted $1$-handles given by $\gamma_1$ and $f''$, a blue $2$-handle $f$, a green $2$-handle $f'$, an orange 2-handle $g$, one 3-handle, and one 4-handle. The framing of the 2-handles is given by the surface $\Sigma$. 
\autoref{fig:abcdef4}(a) and \autoref{fig:abcdef4}(b) demonstrate how to create $\mc{K}$ from the curve data. 

\begin{figure}[h]
\centering
\includegraphics[width=12cm]{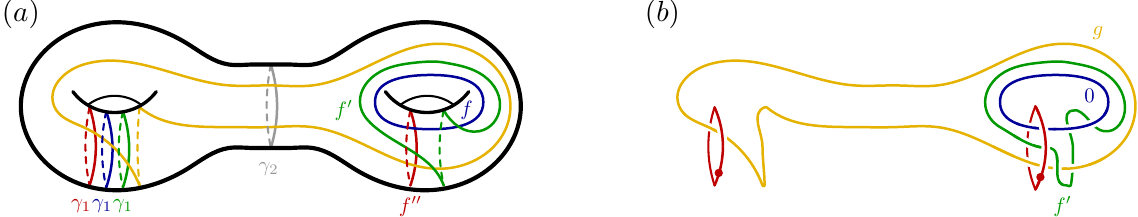}
\caption{(a) Curve data from $\Tcal$. (b) The Kirby diagram $\mc{K}$.}
\label{fig:abcdef4}
\end{figure}

We now use the work in \autoref{section:curves_in_genus_two} to understand the diagram $\mc{K}$. After a surface diffeomorphism, we can depict $f'$ and $f''$ as the curves $x$ and $y$ in \autoref{fig:curves}, respectively. We can also assume that the longitude $l$ of the torus in the right-hand-side of the figure is isotopic to the curve $f$; since $f$ is usual to both $f'$ and $f''$. Here, the intersection of $f'$ and $f''$ is the quantity $n$ in \autoref{section:curves_in_genus_two}. 
In this setup, the curve $z=g$ is dual to both $x=f'$ and $y=f''$ so it satisfies the hypothesis of \autoref{prop:scc_in_genus_2}. Thus, by the proposition, there are three families of curves $z=g$, and so there are families of diagrams, we need to discuss: curves of type (0), (2), or (4) in \autoref{prop:scc_in_genus_2}. 

If $z=g$ is a curve of type (0), then $g$ is disjoint from $\gamma_2$ and lies entirely in the torus on the right side or \autoref{fig:curves}. We can slide $\psi_1$ over $g$ to make $\psi_1$ disjoint from $\gamma_2$. This, will force the curve $\gamma_2$ to bound a disk in $H_4$. Since we knew that $\gamma_2$ bounded disks in $H_1$, $H_2$, and $H_3$ (see \autoref{fig:islands_L6}(c)), the multisection is reducible. Moreover, $\Tcal$ is the connected sum of two genus-one multisections and so $\Lcal_n^*(\Tcal)=0$; this is a contradiction. 

Suppose now that $z=g$ is a curve of type (2) or (4). In order to make this case more intuitive, \autoref{fig:kirby_diagrams_ij} contains the respective Kirby diagrams $\mc{K}$ obtained from the curves in \autoref{fig:curves_ij}. 
By construction, the curves $f$ and $f''$ form a $1/2$-cancelling pair. Since $f$ does not link with any other circle in $\mc K$, removing the pair from $\mc K$ amounts to just erasing the curves $f$ and $f''$ from the diagram. Denote by $
\mc K'$ this new Kirby diagram for $X$. 

\begin{figure}[h]
\includegraphics[width=12cm]{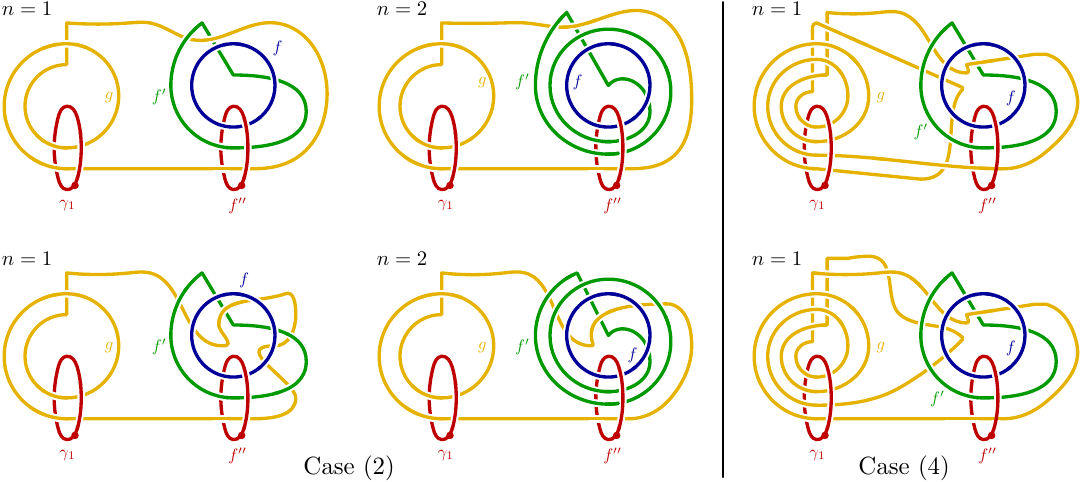}
\centering
\caption{Kirby diagrams $\mc K$ obtained from the curves in \autoref{fig:curves_ij}.}
\label{fig:kirby_diagrams_ij}
\end{figure}

We study the linking matrix of $\mc K'$ with respect to the circles $\{\gamma_1,f',g\}$. By definition, 
$$
A=
\begin{pmatrix}
fr(\gamma_1) & lk(\gamma_1,f') & lk(\gamma_1,g)\\
lk(\gamma_1,f') & fr(f') & lk(f',g)\\
lk(\gamma_1,g) & lk(f',g) & fr(g)
\end{pmatrix}
=
\begin{pmatrix}
0 & 0 & lk(\gamma_1,g)\\
0 & fr(f') & lk(f',g)\\
lk(\gamma_1,g) & lk(f',g) & fr(g)
\end{pmatrix}.
$$
The determinant of the linking matrix must be zero, otherwise, the boundary of $X_1$ is not $S^1\times S^2$. Hence, either $lk(f',g)=0$ or the framing of $f'$ is zero. 

By construction, $f$ is an unknot with framing equal to the quantity $n$ from \autoref{section:curves_in_genus_two}. Moreover, the curve $g$ crosses the disk bounded by $f'$ in exactly one point (see \autoref{fig:kirby_diagrams_ij}) so $lk(f',g)=\pm 1$. 
In particular, if $fr(f')=0$ then $\mc K'$ is the diagram of the double of a 4-manifold with one handle of indices zero, one, and two. The work in Cases (i) and (iii) above (\autoref{fig:abcdef}) can be repeated to conclude that $X$ is diffeomorphic to a (twisted) spun lens space, $S^1\times S^3\#S^2\times S^2$, or $S^1\times S^3\# S^2\widetilde{\times}S^2$.

To end, suppose that $lk(f',g)=0$. By construction, this linking number is equal to the algebraic intersection number of the curves $\gamma_1$ and $g$ inside $\Sigma$. 
If $z=g$ is of type (4), \autoref{remark:family(4)} explains that $g\cdot \gamma_1=0$ if and only if $g$ is disjoint from $\gamma_1$. The same conclusion holds if $g$ is of type (2) (see the statement of \autoref{prop:scc_in_genus_2}). Hence, in either type, the circle $g$ can be isotoped away from $\gamma_1$. In particular, $\mc K'$ has one 0-handle, one 1-handle, and two 2-handles attached along a Hopf link. This diagram represents $S^1\times S^3\#S^2\times S^2$ or $S^1\times S^3\# S^2\widetilde{\times} S^2$. 
This finishes the proof of the theorem. 
\end{proof}


\sloppy
\printbibliography[title={Bibliography}]

$\quad$ \\
Rom\'an Aranda, {Francis Marion University}\\
email: \texttt{romanaranda123@gmail.com}\\ 
$\quad$ \\
Sarah Blackwell, {University of Virginia}\\
email: \texttt{blackwell@virginia.edu}\\
$\quad$ \\
Devashi Gulati, {University of Georgia}\\
email: \texttt{gulati.devashi@gmail.com}\\
$\quad$ \\
Homayun Karimi, {McMaster University}\\
email: \texttt{homayun.karimi@gmail.com }\\
$\quad$ \\
{Geunyoung Kim}, {McMaster University}\\
email: \texttt{kimg68@mcmaster.ca}\\
$\quad$ \\
{Nicholas Paul Meyer}, {Winona State University}\\
email: \texttt{nick.meyer@winona.edu}\\
$\quad$ \\
Puttipong Pongtanapaisan, Pitzer College\\
email: \texttt{puttip@pitzer.edu}\\
$\quad$ \\

\end{document}